\DeclareMathOperator{\diag}{diag}
\DeclareMathOperator{\Aut}{Aut}
\DeclareMathOperator{\Char}{char}
\DeclareMathOperator{\Hom}{Hom}
\DeclareMathOperator{\Ext}{Ext}
\DeclareMathOperator{\lm}{lm}
\DeclareMathOperator{\lc}{lc}
\DeclareMathOperator{\NF}{NF}
\DeclareMathOperator{\trdeg}{tr.deg}
\DeclareMathOperator{\Gr}{Gr}
\DeclareMathOperator{\gldim}{gl.dim }
\DeclareMathOperator{\gkdim}{GKdim}
\newtheorem{theorem}{Theorem}[section]
\newtheorem{lemma}[theorem]{Lemma}
\newtheorem{prop}[theorem]{Proposition}
\newtheorem{proposition}[theorem]{Proposition}
\theoremstyle{definition} 
\newtheorem{definition}[theorem]{Definition}
\newtheorem{remark}[theorem]{Remark}
\newtheorem{example}[theorem]{Example}
\newtheorem{cor}[theorem]{Corollary}
\begin{document}
\newcommand{\DOT}{\setlength{\unitlength}{1pt}\begin{picture}(2.5,2)
               (1,1)\put(2,3.5){\circle*{3}}\end{picture}}
\newcommand{\HHD}{{\rm HH}^{\DOT}}
\newcommand{\cH}{{\mathcal H}}
\newcommand{\cA}{{\mathcal A}}
\newcommand{\qdha}{quantum Drinfeld Hecke algebra}
\newcommand{\Lra}{\Leftrightarrow}
\newcommand{\lra}{\longrightarrow} 
\newcommand{\Ra}{\Rightarrow}
\newcommand{\F}{{\mathbb F}}
\newcommand{\FF}{{\mathcal F}}
\newcommand{\N}{{\mathbb N}}
\newcommand{\K}{{\mathbb K}}
\newcommand{\GL}{\mathbb{GL}}
\newcommand{\SL}{\mathbb{SL}}
\newcommand{\KG}{{\K}G}
\newcommand{\C}{{\mathbb C}}
\newcommand{\R}{{\mathbb R}}
\newcommand{\Z}{{\mathbb Z}}
\newcommand{\Q}{{\mathbb Q}}
\newcommand{\E}{{\mathcal E}}
\newcommand{\nL}{{\mathcal L}}
\newcommand{\nNull}{{\mathrm {\mathbf 0}}}
\newcommand{\kg}{{\mathcal G}}
\newcommand{\ki}{{\mathcal I}}
\newcommand{\kv}{{\mathcal V}}
\newcommand{\ox}{{\overline{x}}}
\newcommand{\D}[1] {\partial_{#1}}
\renewcommand{\d}{{\partial}}
\newcommand{\dx}{{\partial_x}}
\newcommand{\dy}{{\partial_y}}
\newcommand{\mb}[1] {\mathbf{#1}}
\newcommand{\mT}{{\mathcal T}}
\newcommand{\kD}{{\mathcal D}}
\newcommand{\I}{{\mathcal I}}
\newcommand{\J}{{\mathcal J}}
\newcommand{\kb}{{\mathcal B}}
\newcommand{\A}{\ensuremath{\mathcal{A}}}
\newcommand{\B}{\ensuremath{\mathcal{B}}}
\newcommand{\g}{{\ensuremath{\mathfrak{g}}}}
\newcommand{\f}{{\ensuremath{\mathfrak{f}}}}
\newcommand{\gl}{{\mathfrak{ll}}}
\renewcommand{\t}{{\mathfrak{t}}}
\newcommand{\fI}{{\mathfrak{I}}}
\newcommand{\ra}{\rightarrow}
\newcommand{\GZS}{\operatorname{GZSupp}}
\renewcommand{\L}{{\mathcal{L}}}
\newcommand{\zsupp}{\ensuremath{\operatorname{Supp}}}
\newcommand{\n}{\ensuremath{\mathfrak{n}}}
\newcommand{\m}{\ensuremath{\mathfrak{m}}}
\newcommand{\ann}{\operatorname{Ann}}
\newcommand{\spec}{\operatorname{Spec}}
\newcommand{\Ann}{\operatorname{Ann}}
\newcommand{\rank}{\operatorname{rank}}
\newcommand{\syz}{\operatorname{Syz}}
\newcommand{\Syz}{\operatorname{Syz}}
\newcommand{\LeftSyz}{\operatorname{LeftSyz}}
\newcommand{\id}{\mathbb{I}\text{d}}
\newcommand{\ot}{\otimes}
\newcommand{\X}{\langle X \rangle}
\newcommand{\KX}{\K\X}
\newcommand{\algebraitem}[6] {\rule[-1mm]{0mm}{5mm}#1 & #2 & #3\\ \hline
\multicolumn{3}{|l|}{\parbox[t]{120mm}{\setlength{\tabcolsep}{0mm}
\begin{tabular}{l@{\quad}rll}
\rule{0mm}{5mm}Isomorphism: & $X$ & $\to\;$ & #4\\
\rule[-1mm]{0mm}{0mm}& $Y$ & $\to$ & #5
\end{tabular}}}\\
\multicolumn{3}{|l|}{\parbox[t]{120mm}{#6}}\\ \hline}
\title[Quantum Drinfeld Hecke Algebras]{Quantum Drinfeld
Hecke Algebras}
\date{October 2013.}
\author{Anne V.\ Shepler}
\address{Anne V.\ Shepler, Department of Mathematics, University of North Texas,
Denton, Texas 76203, USA}
\email{ashepler@unt.edu}
\author{Viktor Levandovskyy}
\address{Viktor Levandovskyy, Lehrstuhl D f\"ur Mathematik,
RWTH Aachen University,
Templergraben 64,
D-52062 Aachen, Germany}
\email{levandov@math.rwth-aachen.de} 

\thanks{Key Words: skew polynomial rings, 
noncommutative Gr\"obner bases, graded Hecke algebras,
symplectic reflection algebras, Hochschild cohomology}
\thanks{Work of the second author was partially supported by National Science
Foundation research grants
\#DMS-0800951 and \#DMS-1101177
and a research fellowship from the 
Alexander von Humboldt Foundation.}
\begin{abstract}
We consider finite groups acting on 
quantum (or skew) polynomial rings.  Deformations of the 
semidirect product of the quantum polynomial ring with the acting group
extend symplectic reflection algebras and graded Hecke algebras
to the quantum setting over a field
of arbitrary characteristic.  
We give necessary and sufficient conditions for such algebras to satisfy a 
Poincar\'e-Birkhoff-Witt property using the theory of noncommutative 
Gr\"obner bases.
We include applications to the case of abelian groups
and the case of groups acting on coordinate rings of quantum planes.
In addition, we classify graded automorphisms of the coordinate ring of quantum 3-space.  In characteristic zero, Hochschild cohomology
gives an elegant description of the Poincar\'e-Birkhoff-Witt conditions.
\end{abstract}
\maketitle
\section{Introduction}
Drinfeld Hecke algebras arise in a variety of settings: 
for example, as symplectic reflection algebras, 
rational Cherednik algebras, and Lusztig's graded version
of the affine Hecke algebra.
These algebras (also known as {\em graded Hecke algebras})
are natural deformations
of the skew group algebra 
(the semi-direct product algebra)
formed by 
a finite group $G$ acting 
on a polynomial ring over some vector space $V$.
They reflect the geometry of orbifold theory
by serving as a noncommutative substitute for the 
coordinate ring 
(the ring of invariant polynomials $S(V)^G$)
of the orbifold $V/G$
(see Etingof and Ginzburg~\cite{EtingofGinzburg}).  
These algebras were also used to prove a version of the $n!$ conjecture
for Weyl groups (see Gordon~\cite{Gordon}).

In this article, we explore analogous deformations of a finite
group acting on a quantum polynomial algebra
over a field of arbitrary characteristic.
Let $V$ be a finite dimensional vector space
over a field $\K$.
The {\em quantum polynomial algebra} 
$S_Q(V)$ 
of $V$ (also called the 
{\em skew polynomial ring}, or the
{\em coordinate ring of multiparameter quantum affine space})
is the
associative $\K$-algebra generated by  
a $\K$-basis $\{v_1,\ldots, v_n\}$ of $V$
subject to the relations
$v_j v_i = q_{ij} v_i v_j$ for $i < j $ for some 
(quantum) parameters $q_{ij}$ in $\K^*$:
$$
S_Q(V):=\K\left<v_1, \ldots, v_n\right>/
\left<v_j v_i - q_{ij} v_i v_j:1\leq i < j \leq n\right>\, .
$$

We augment the quantum polynomial algebra by a finite
group $G$ acting linearly on the vector space $V$.
We introduce relations on the natural semi-direct
product algebra $T(V)\rtimes G$ (for $T(V)$ the tensor algebra
of $V$)
which set $q$-commutators
of vectors in $V$ to elements in the group algebra.
We call the resulting $\K$-algebra a {\em \qdha}
if it satisfies a Poincar\'e-Birkhoff-Witt
(PBW) property (see Definition~\ref{defn}).  

We appeal to the theory of noncommutative 
Gr\"obner bases to  
investigate PBW properties.
Explorations of related algebras often
use Bergman's Diamond Lemma~\cite{Bergman},
a cornerstone of noncommutative Gr\"obner bases theory.
We use Gr\"obner bases theory here as a rigorous 
and elegant refinement of Bergman's ideas. This refinement is 
well-suited to investigating  
PBW-like 
properties in a variety of settings.
Indeed, the constructive nature of Gr\"obner bases
theory often verifies a PBW-like  property
by explicitly giving a PBW-like basis; 
the theory also illuminates the
failure of such properties to hold by supplying natural substitutes for
PBW-like bases.
Note that Gr\"obner bases theory emphasizes a fixed total well ordering
on monomials,
providing an expedient approach to non-noetherian algebras.
Indeed, 
a Gr\"obner basis for a generating set of relations defining an algebra 
may be finite for one choice of monomial ordering but infinite for another choice;
see Example~\ref{exPBW}.
Moreover, Gr\"obner bases theory is algorithmic with several available
implementations, providing computational aid to
algebraic questions (on, e.g., ideal membership,
kernels of algebra and module homomorphisms, and free and projective
resolutions).

Although \qdha   
s extend symplectic reflection algebras and graded Hecke algebras
to the setting of quantum polynomial rings,
our analysis requires tools previously unused 
in investigating the nonquantum setting.
Since we are working over a field of arbitrary characteristic,
many methods from the traditional theory of graded Hecke algebras
no longer apply.
(Note that the original proof 
of the technique of Braverman and Gaitsgory~\cite{BravermanGaitsgory}
does not automatically apply in our setting, as the group algebra $\K G$
may fail to be semi-simple;
see~\cite{PBWTheorem} for an adaptation of the ideas of
Braverman and Gaitsgory for arbitrary group algebras, including the modular
case when the characteristic of the field $\K$ divides the order of the acting group $G$.)
The set of quantum parameters also prevents us from regarding the
algebra parameters as linear functions giving a wide class of uniform
relations (see Remark~\ref{Bilinear Inextendability}), 
and thus we demote traditional linear algebra
in favor of the analysis using noncommutative Gr\"obner bases.

After giving definitions (and examples) in Section~\ref{DefinitionsSection},
we show that every \qdha\ defines a quantum polynomial algebra
upon which the  group acts by automorphisms
in Section~\ref{QuantumPolynomialRingsSection}.
Tools from the theory of noncommutative
Gr\"obner bases theory are given in Sections~\ref{NoncommutativeTheorySection}
and~\ref{GBSection}. 
In Section~\ref{GBPBWBasisSection},
we recall how a Gr\"obner basis may be used to find a 
monomial $\K$-basis for any quotient of a free algebra
by one of its ideals.  
We also discuss general quotient
algebras and associated graded algebra.  (Some 
elementary algebraic properties of \qdha s are also observed 
in this section.) 
We apply this theory in Section~\ref{ConditionsOnParametersSection}
to prove necessary and sufficient conditions
for a factor algebra to define a \qdha.
In Section~\ref{AbelianSection}, we describe
all \qdha s arising from an abelian group (acting diagonally).
We relate the Poincar\'e-Birkhoff-Witt  
condition for \qdha s to results in Hochschild
cohomology and deformation theory by Naidu and Witherspoon~\cite{NW}
in Section~\ref{HochschildCohomologySection}.

In Section~\ref{QuantumPlanesSection}, we discuss
groups which act as automorphisms on the coordinate ring of a quantum plane
and classify all \qdha s in two dimensions.
We describe the automorphism group of the coordinate ring of quantum $3$-space
in Section~\ref{QuantumSpaceSection}.  (We discuss the cases when quantum
parameters are roots-of-unity explicitly.)
Lastly, in Section~\ref{ExtensiveExampleSection},
we demonstrate how to determine
the complete set of \qdha s associated to one fixed (nonabelian) group
with a robust example. 

\vspace{2ex}

\section{Quantum Drinfeld Hecke Algebras}\label{DefinitionsSection}
Let $Q = (q_{ij} \mid 1\leq i,j \leq n)$ 
be a collection
of arbitrary nonzero scalars in $\K$
and consider a finite group $G \subset \GL(V)$.
Let $\{t_g \mid g \in G\}$ be a basis of 
the group algebra $\KG$ and $\{v_1, \ldots, v_n \}$ a $\K$-basis of $V$. 
Define an associative $\K$-algebra $\cH_{Q,\kappa}$ generated by 
\[
\{v_1, \ldots, v_n \} \cup  \{t_g \mid g\in G\} 
\]
subject to the following relations:
\begin{enumerate}
\item[(a)] $t_g t_h = t_{gh}$ for all  
$g,h$ in $G$,
\item[(b)]
 $t_g v = g(v) t_g$  for all $g$ in $G$ and $v$ in $V$,
\item[(c)] 
$v_j v_i = q_{ij}\  v_i v_j + \kappa(v_i, v_j)$ \ 
 for $1\leq i,j\leq n$\ ,
\end{enumerate}
where each parameter
$\kappa(v_i,v_j)$ lies in $\KG$.
Write $$\kappa(v_i, v_j)=\sum_{g\in G} \kappa_g(v_i, v_j) t_g$$ 
for $\kappa_g(v_i,v_j)$ in $\K$.
We identify the identity $e$ of $G$ and $t_e$ of $\KG$ with $1$ in $\K$ 
throughout this article
and we set $G^*:=G\setminus\{1\}$. 
We assume $0$ lies in $\N$ and take all tensor products over $\K$.

\vspace{2ex}

\begin{definition}\label{defn}
We call $\cH_{Q,\kappa}$ a
{\em quantum Drinfeld Hecke algebra} 
if 
$$B=\{v_1^{\alpha_1} \ldots v_n^{\alpha_n} t_g \mid \alpha_i\in\N, \ g\in G\}$$
is a $\K$-basis for $\cH_{Q,\kappa}$.
We call $B$ the {\em standard PBW basis} in this case
and its elements {\em quasi-standard monomials}. 
\end{definition}
\vspace{2ex}

One might alternatively call such algebras ``quantum graded Hecke algebras''
or ``skew Drinfeld Hecke algebras''.
We use the phrase ``PBW basis'' in analogy with a
Poincar\'e-Birkhoff-Witt basis for universal enveloping algebras 
of Lie algebras.
Note that $\cH_{Q,\kappa}$
is a \qdha\  if and only if its associated graded algebra
is isomorphic to 
a skew group algebra $S_Q(V)\# G$
(see Sections~\ref{NoncommutativeTheorySection} 
and~\ref{GBPBWBasisSection}).

The braided Cherednik algebras of
Bazlov and Berenstein~\cite{BazlovBerenstein} are 
special cases of \qdha s.
If we set each $q_{ij}=1$ in the above construction
of $\cH_{Q,\kappa}$ (and work over a field $\K$ of characteristic
zero), we recover the classical (non-quantum) theory of
{\em graded Hecke algebras}, also called {\em Drinfeld Hecke algebras}
(see~\cite{Griffeth}, for example),
which include symplectic reflection algebras and rational
Cherednik algebras.
These algebras were first defined by Drinfeld~\cite{Drinfeld}
for arbitrary finite groups $G$.  They were independently 
discovered and explored
by Lusztig around the same time (see~\cite{Lusztig88, Lusztig89}) 
as graded versions of the affine Hecke algebra
in the special case that $G$ is a Weyl group.
(See~\cite{RamShepler} for basic properties of these algebras
and an argument that Lusztig's algebras
can be realized using Drinfeld's construction.)
Etingof and Ginzburg~\cite{EtingofGinzburg} later rediscovered
these algebras (from a viewpoint of symplectic geometry and
orbifold theory)
for $G$ acting symplectically. 
We give some other examples with fixed quantum system of parameters.

\vspace{3ex}

\begin{definition}
\label{QSP}
A matrix 
$Q = (q_{ij} \mid 1\leq i,j \leq n)$ 
with entries in $\K^*$
is a {\em quantum system of parameters} if 
$q_{ij} = q_{ji}^{-1}$ and $q_{ii}=1$ for any $i,j$.
\end{definition}

\vspace{1ex}

\begin{example}
Set $\kappa\equiv 0$, $G=1$, and let
$Q$ be a quantum system of parameters.
Then the factor algebra $\cH_{Q,\kappa}$
is just the quantum polynomial algebra $S_Q(V)$.
\end{example}

\vspace{1ex}

\begin{example}\label{quantumexterioralgebra}
Again, 
let $\kappa\equiv 0$, $G=1$, and let
$Q$ be a quantum system of parameters.
Assume that $\text{char }\K\neq 2$
and set $-Q=(-q_{ij}\mid 1\leq i,j\leq n)$.
Then the factor algebra $\cH_{-Q,\kappa}$
coincides with the 
{\em quantum exterior algebra}
$\bigwedge_Q(V)$ of quantum affine space (corresponding
to the quantum polynomial algebra $S_Q(V)$)
generated over $\K$ by all products
$v_{i_1}\wedge \cdots\wedge v_{i_k}$ (for $1\leq k \leq n$)
with multiplication
$$v_i\wedge v_j = -q_{ji} \ v_j \wedge v_i\ .$$
Although $S_Q(V)$ has the standard PBW basis 
(e.g., see~\cite[Example~5.1]{BGV}
or Proposition~\ref{KQV}), the algebra $\bigwedge_Q(V)$ does not
(as each $v_i\wedge v_i=0$). 
(In fact, it is easy to see that any \qdha\ with $\kappa\equiv 0$ 
is a quantum polynomial algebra, see 
Proposition~\ref{skewpolyalg}.)
\end{example}

\vspace{2ex}

\begin{example}
Let $q,\omega$ be roots of unity in $\K$ and let $G$ 
be the subgroup of $\GL_4(\K)$ generated by the diagonal matrix
$$
h:= \text{diag}
(q^2, \omega,\, \omega^{-1}, q^{-2}) \, .
$$
The $\K$-algebra $\cH$ generated by $v_1, v_2, v_3, v_4$ and $t_h$ 
with relations
$$
\begin{aligned}
t_g v_i &= g(v_i) t_g &&\text{ for } 1\leq i\leq n \text{ and } g \text{ in } G,\\
v_i v_j &= q\, v_j v_i &&\text{ for } (i,j)\neq (2,3),
\text{ and }\\
v_2 v_3&=q\, v_3 v_2 + t_h \ 
\end{aligned}
$$
is a \qdha.
\end{example}

\quad

In Section~\ref{AbelianSection}, we describe
all \qdha s arising from abelian groups acting diagonally.
We classify all 2-dimensional
\qdha s in Section~\ref{QuantumPlanesSection}.
Section~\ref{ExtensiveExampleSection}
gives examples of \qdha s arising from a nondiagonal group action.

\vspace{2ex}

\begin{remark}\label{Bilinear Inextendability}
(Bilinear Inextendability)
We define parameters $q,\kappa$ just on pairs of basis elements $v_i,v_j$,
but we could (artificially) extend to functions 
$q: V\times V \rightarrow \K$ and $\kappa: V\times V \rightarrow \KG$.
This approach is generally not useful for constructing factor algebras
like those examined here (although it is helpful in translating
results to the setting of cohomology, see 
Section~\ref{HochschildCohomologySection}).

For example, 
suppose we were to extend relation (3) defining the algebra $\cH_{Q,\kappa}$
to all pairs $v,w$ in $V$ using
a {\em bilinear} function $\kappa$ and some function $q:V\times V \rightarrow \K$.
Then in $\cH_{Q,\kappa}$, for any distinct $i,j,k$,
$$(v_i+v_j)v_k=v_iv_k+v_jv_k
=q(v_k,v_i) v_kv_i+ q(v_k,v_j) v_kv_j +\kappa(v_k,v_i+v_j)\ 
 $$
on one hand, while 
$$
 (v_i+v_j)v_k=q(v_k,v_i+v_j) v_k(v_i+v_j)+\kappa(v_k,v_i+v_j) \ 
$$
on the other hand, forcing
 $$
 q(v_k,v_i)v_kv_i+q(v_k,v_j)v_kv_j=q(v_k,v_i+v_j)v_kv_i+q(v_k,v_i+v_j)v_kv_j.
 $$
If $\cH_{Q,\kappa}$ has the standard PBW basis, we may equate 
coefficients:
 $$
 q(v_k,v_i)=q(v_k,v_i+v_j)=q(v_k,v_j).
 $$
 This forces $q$ constant on basis vectors, i.e., 
 $q_{ij}=c$ for all $i,j$, for fixed $c$ in $\K$.  
Note that $q$ bilinear would generally imply  
that $q$ is the zero function.
\end{remark}
\section{Quantum Polynomial Algebras, Quantum Determinants,
and Skew Group Algebras}\label{QuantumPolynomialRingsSection}

We show in this section that every quantum Drinfeld Hecke algebra
defines a quantum polynomial algebra
carrying an action of the group
by automorphisms.
We first give an easy lemma describing automorphisms
of quantum polynomial algebras in terms of quantum minor determinants.
Any automorphism $h$ of the quantum exterior algebra
$\bigwedge_Q(V)$ will act on the top
degree piece $\K$-span$\{v_1\wedge\cdots\wedge v_n\}$
by a scalar $\det_Q(h)$ which one might call the
{\em quantum determinant} of $h$.
We extend this idea:
If a $2\times 2$ matrix with entries $a,b,c,d$ in $K$ has determinant
$ad-bc$, then we define its quantum determinant to be $ad-qbc$ 
where $q$ is the quantum parameter of a $2$-dimensional quantum polynomial ring.
We define a quantum minor analogously.
\vspace{1ex}

\begin{definition}
For a linear transformation $h$ acting on $V$ via
$h(v_j)=\sum_i h_i^j v_i$, we define
the {\em quantum $(i,j,k,l)$-minor determinant} of $h$ as
$$
{\det}_{ijkl}(h):=h^i_{k}h^j_{\ell} - q_{ij} h^i_{\ell} h^j_{k} .
$$
\end{definition}
%
\vspace{2ex}

\begin{lemma}\label{auts}
A transformation $h$ in $\GL(V)$ acts as an automorphism
on the quantum polynomial algebra (with quantum system of parameters $Q$)
$$
S_Q(V):=\K\left<v_1, \ldots, v_n\right>/
\left<v_j v_i = q_{ij} v_i v_j:1\leq i, j \leq n\right>
$$ 
if and only if 
$$
{\det}_{ijk \ell}(h)= - q_{\ell k}\ {\det}_{ij\ell k}(h)\ 
\quad\text{ for all } 1\leq i,j,k,\ell \leq n\ .
$$
\end{lemma}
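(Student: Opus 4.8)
The plan is to reduce the claim to the requirement that the linear map $h$ preserve the defining relations of $\K_Q[V]$, and then to read that requirement off by expanding in the standard PBW basis of $\K_Q[V]$. First I would observe that, since $V$ generates $\K_Q[V]$ and each graded component is finite-dimensional, an $h\in\GL(V)$ acts as an automorphism of $\K_Q[V]$ if and only if the canonical extension $\bar h$ of $h$ to the tensor algebra $T(V)$ descends to an algebra endomorphism of $\K_Q[V]$: any such endomorphism is graded and surjective (it is already surjective on the degree-one part $V$), hence bijective on each graded component. Writing $\K_Q[V]=T(V)/I$ with $I$ the two-sided ideal generated by the elements $v_jv_i-q_{ij}v_iv_j$, and using that $\bar h$ is multiplicative, $\bar h(I)\subseteq I$ holds if and only if $h(v_j)h(v_i)-q_{ij}\,h(v_i)h(v_j)=0$ in $\K_Q[V]$ for all $i,j$.

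Next I would compute this element. With $h(v_i)=\sum_k h^i_k v_k$,
$$
h(v_j)h(v_i)-q_{ij}\,h(v_i)h(v_j)=\sum_{k,\ell}\bigl(h^j_k h^i_\ell-q_{ij}\,h^i_k h^j_\ell\bigr)\,v_kv_\ell ,
$$
and I would rewrite the right-hand side in the basis $\{v_av_b:1\le a\le b\le n\}$ of the degree-two component, normal-ordering via $v_bv_a=q_{ab}v_av_b$ for $a<b$. Collecting terms, the coefficient of $v_av_b$ for $a<b$ is $q_{ab}\,{\det}_{ijab}(h)+{\det}_{ijba}(h)$, while the coefficient of $v_a^2$ is $(1-q_{ij})h^i_a h^j_a={\det}_{ijaa}(h)$. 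Hence $h$ acts as an automorphism of $\K_Q[V]$ if and only if, for all $i,j$, we have ${\det}_{ijba}(h)=-q_{ab}\,{\det}_{ijab}(h)$ for all $a<b$ and ${\det}_{ijaa}(h)=0$ for all $a$.

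Finally I would match this with the stated criterion. Because $q_{ii}=1$ and $q_{ij}q_{ji}=1$, one has ${\det}_{iik\ell}(h)=0$ identically, so the criterion is vacuous for $i=j$; moreover the instance $(i,j,\ell,k)$ of the criterion is equivalent to the instance $(i,j,k,\ell)$. Thus the criterion is equivalent to the conjunction of its instances with $k\ne\ell$ — which, after relabelling $\{k,\ell\}=\{a,b\}$ with $a<b$, are exactly ${\det}_{ijba}(h)=-q_{ab}\,{\det}_{ijab}(h)$ — together with its instances with $k=\ell=a$, which read ${\det}_{ijaa}(h)=-q_{aa}\,{\det}_{ijaa}(h)=-{\det}_{ijaa}(h)$ and so force ${\det}_{ijaa}(h)=0$. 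This is precisely the condition obtained in the previous paragraph, so the two are equivalent. The one slightly delicate point is the normal-ordering bookkeeping that produces the coefficients in the middle step and the observation that the resulting quadratic expressions in the matrix entries $h^i_k$ assemble into the quantum-minor combination $q_{ab}\,{\det}_{ijab}(h)+{\det}_{ijba}(h)$; this is routine, and I do not expect any conceptual obstacle.
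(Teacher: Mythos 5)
Your proof is correct and follows essentially the same route as the paper's: expand $h(v_j)h(v_i)-q_{ij}\,h(v_i)h(v_j)$ in terms of the quantum minors, normal-order into the standard monomial basis of $\K_Q[V]$, and equate coefficients. Your only additions are the (correct) preliminary reduction to relation-preservation and the explicit check that the stated criterion at $k=\ell$ forces ${\det}_{ijkk}(h)=0$ --- which, exactly as in the paper's own argument, tacitly uses $\Char\K\neq 2$.
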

\begin{proof}
We write
$
h(v_j)h(v_i)-q_{ij}h(v_i)h(v_j)=
\sum_{k,\ell} {\det}_{ij\ell k}(h)\ v_k v_{\ell}
$
and express as a sum of standard monomials:
$$
\begin{aligned}
\sum_{k\leq \ell} {\det}_{ij\ell k}(h)\ &  v_k v_{\ell}+
 \sum_{k> \ell} {\det}_{ij\ell k}(h)\ v_k v_{\ell}\ = \\
\sum_{k<\ell} &({\det}_{ij\ell k}(h) 
 + {\det}_{ijk\ell}(h) \ q_{k\ell})\ v_{k} v_{\ell}
+\sum_k {\det}_{ijkk}(h)\ v_k v_k\, .\\
\end{aligned}
$$
Since the set of standard monomials $\{v_1^{\alpha_1}\cdots v_n^{\alpha_n}:\alpha_i\in \N\}$ is a $\K$-basis of $S_Q(V)$
(see, e.g.,~\cite[Example~5.1]{BGV} or Corollary~\ref{KQV}),
the last expression vanishes in $S_Q(V)$ exactly when
the coefficient of each $v_k v_\ell$ (for $k<\ell$)
and of each $v_k^2$ is zero, yielding the result.  
\end{proof}
As an easy consequence (needed later), we observe
the following corollary.
\vspace{1ex}

\begin{cor}
\label{transpose}
A matrix $h$ in $\GL(V)$ 
acts as an automorphism on $S_Q(V)$ 
if and only if its transpose 
acts as an automorphism on $S_Q(V)$.
\end{cor}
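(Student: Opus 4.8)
The plan is to distill from Lemma~\ref{auts} a criterion for $h\in\GL(V)$ to act on $\K_Q[V]$ that is visibly unchanged under transposing $h$; the corollary then follows at once. Write $h(v_m)=\sum_i h^m_i v_i$. Reading off the proof of Lemma~\ref{auts}---equating to zero, for each $i,j$, the coefficient of every standard monomial $v_kv_\ell$ ($k\le\ell$) in $h(v_j)h(v_i)-q_{ij}h(v_i)h(v_j)$---shows that $h$ acts as an automorphism on $\K_Q[V]$ if and only if
$$
\text{(a)}\quad (1-q_{ij}q_{\ell k})\,h^i_k h^j_\ell=(q_{ij}-q_{\ell k})\,h^i_\ell h^j_k\ \ (k\neq\ell),
\qquad
\text{(b)}\quad (1-q_{ij})\,h^i_k h^j_k=0
$$
hold for all indices. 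I claim that (a) and (b) together are equivalent to
$$
(\star)\qquad h^a_c\,h^b_d\neq 0\ \Longrightarrow\ q_{ab}\,q_{dc}=1\qquad\text{for all }a,b,c,d ,
$$
and that $(\star)$ is manifestly symmetric under $h\mapsto h^T$; these two facts give the corollary.

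That $(\star)$ implies (a) and (b) is routine: $(\star)$ forces $(1-q_{ij}q_{\ell k})h^i_kh^j_\ell=0$, and (applying $(\star)$ to $(i,j,\ell,k)$, using $q_{k\ell}=q_{\ell k}^{-1}$) also $(q_{ij}-q_{\ell k})h^i_\ell h^j_k=0$ and $(1-q_{ij})h^i_kh^j_k=0$. The converse is the substance. First, (a) and (b) force each $\operatorname{supp} h(v_a):=\{c:h^a_c\neq 0\}$ to be a clique in the graph on $\{1,\dots,n\}$ with $i\sim j\iff q_{ij}=1$: if some $\operatorname{supp} h(v_a)$ contained $c\neq d$ with $q_{cd}\neq 1$, then for every $b$ we would get $(h^b_c,h^b_d)=\mu_b(h^a_c,h^a_d)$ for a scalar $\mu_b$ with $\mu_a=1$---using (b) when $q_{ab}\neq 1$ (which forces $h^b_c=h^b_d=0$) and (a) at $(a,b,c,d)$ when $q_{ab}=1$ (which, since $q_{cd}\neq 1$, forces the minor $h^a_ch^b_d-h^a_dh^b_c$ to vanish)---so the matrix of $h$ would be singular, contradicting $h\in\GL(V)$. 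Granting this, suppose $h^a_c,h^b_d\neq 0$ but $q_{ab}q_{dc}\neq 1$, i.e.\ $q_{ab}\neq q_{cd}$. If $c=d$ this contradicts (b), since $q_{ab}\neq q_{cc}=1$. If $c\neq d$, the clique property (together with (b) in the case $q_{cd}=1$, where then $q_{ab}\neq 1$) gives $h^a_d=h^b_c=0$, so (a) at $(a,b,c,d)$ reduces to $(1-q_{ab}q_{dc})h^a_ch^b_d=0$ with both factors nonzero---a contradiction. Hence (a)\,$\wedge$\,(b)\,$\iff(\star)$.

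Finally, since $(h^T)^x_y=h^y_x$ and $Q$ is a quantum system of parameters, $(\star)$ for $h^T$ reads $h^c_ah^d_b\neq 0\Rightarrow q_{ab}q_{dc}=1$ for all $a,b,c,d$, which---relabelling $(a,b,c,d)\mapsto(c,d,a,b)$ and using $q_{cd}q_{ba}=(q_{ab}q_{dc})^{-1}$---is exactly $(\star)$ for $h$. Thus $h$ acts as an automorphism on $\K_Q[V]$ iff $(\star)$ holds iff $h^T$ does. The only genuinely nontrivial step is the converse (a)\,$\wedge$\,(b)\,$\Rightarrow(\star)$, and within it the clique observation---this is the one place invertibility of $h$ is used essentially; the rest is bookkeeping with $q_{ij}q_{ji}=1$.
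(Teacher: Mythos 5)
Your proof is correct. The paper itself offers no argument for Corollary~\ref{transpose} beyond the phrase ``as an easy consequence'' of Lemma~\ref{auts}, so there is no written proof to compare against; what you supply is a genuine (and needed) argument. Your reformulation $(\star)$ --- $h^a_c h^b_d\neq 0\Rightarrow q_{ab}q_{dc}=1$ --- is manifestly transpose-symmetric, and your reduction of (a)$\wedge$(b) to $(\star)$ via the clique/support observation is sound; I checked the case analysis and the final relabelling. You are also right that this is not a purely formal consequence of the lemma: reindexing the condition $\det_{ijk\ell}(h)=-q_{\ell k}\det_{ij\ell k}(h)$ at $(k,\ell,i,j)$ and comparing with the condition for $h^T$ at $(i,j,k,\ell)$ produces the same two terms but with one sign flipped, so the two families of equations only coincide after each term is shown to vanish separately --- which is exactly what your clique argument (using $h\in\GL(V)$) accomplishes. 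Invertibility really is essential: for $n=2$ with $q$ generic, the singular map $h(v_1)=v_1+v_2$, $h(v_2)=0$ satisfies the coefficient conditions of Lemma~\ref{auts} while its transpose does not (since $(h^T)^1_1(h^T)^2_1\neq 0$ violates condition (b)). So your proof not only fills the gap but correctly locates where the content lies. One cosmetic remark: your condition (a) for $k=\ell$ would read $2(1-q_{ij})h^i_kh^j_k=0$ if taken verbatim from the displayed identity in Lemma~\ref{auts}, which degenerates in characteristic $2$; you sidestep this correctly by reading (b) off the proof of the lemma (vanishing of the coefficient of $v_k^2$) rather than off its statement, and it is worth keeping that distinction explicit.
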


\vspace{1ex}

\begin{definition}
We say that a parameter $\kappa$ is a {\em quantum $2$-form}
if $\kappa$ extends to an element of 
$\Hom_{\K}(\bigwedge_Q(V),\K G)$, i.e.,
each $\kappa_g$ defines an element of 
$${\textstyle \bigl(\bigwedge_Q(V)\bigr)^*\cong \bigwedge_{Q^{-1}} (V^*)}$$
where $Q^{-1}=(q_{ij}^{-1}:1\leq i,j\leq n)$.
In other words, $\kappa$ is a quantum $2$-form exactly when
$\kappa(v_i, v_i)=0$ and
$\kappa(v_j, v_i)=-q_{ij}^{-1}\kappa(v_i,v_j)$ for
all $i,j$.
\end{definition}

\vspace{3ex}

A PBW property on $\cH_{Q,\kappa}$ implies an underlying
quantum polynomial algebra.
\begin{prop}\label{skewpolyalg}
Let $\cH_{Q,\kappa}$ be a quantum Drinfeld Hecke algebra.
Then
\begin{itemize}
\item 
the parameter $\kappa$ is a quantum 2-form,
\item the matrix $Q$ is a quantum system of parameters, and 
\item the group $G$ acts upon 
the quantum polynomial algebra $S_Q(V)$ by automorphisms.
\end{itemize}
\end{prop}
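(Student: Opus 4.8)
The plan is to extract each of the three conclusions from the single hypothesis that the quasi-standard monomials $B=\{v_1^{\alpha_1}\cdots v_n^{\alpha_n}t_g\}$ form a $\K$-basis of $\cH_{Q,\kappa}$. The key observation is that the relations (1)--(3) let us rewrite any word in the generators, but the PBW assumption says this rewriting is confluent: two different reduction sequences applied to the same element must yield the same $\K$-linear combination of quasi-standard monomials. So the strategy is to take a small ``ambiguous'' expression, reduce it two different ways, and equate coefficients.

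First I would treat the quantum $2$-form claim. For fixed $i$, relation (3) with $i=j$ reads $v_i v_i = q_{ii} v_i v_i + \kappa(v_i,v_i)$, so $(1-q_{ii})v_iv_i = \kappa(v_i,v_i)$ in $\cH_{Q,\kappa}$; since $v_i^2$ and the $t_g$ appearing in $\kappa(v_i,v_i)$ are all distinct elements of $B$, comparing coefficients forces $q_{ii}=1$ and $\kappa(v_i,v_i)=0$. For the antisymmetry, apply relation (3) twice: $v_j v_i = q_{ij}v_iv_j + \kappa(v_i,v_j)$ and also $v_iv_j = q_{ji}v_jv_i + \kappa(v_j,v_i)$. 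Substituting the second into the first gives $v_jv_i = q_{ij}q_{ji}v_jv_i + q_{ij}\kappa(v_j,v_i) + \kappa(v_i,v_j)$; moving terms to the standard form (with $v_jv_i$ for $i<j$ itself rewritten, or more cleanly by working with $i<j$ so $v_iv_j$ is already standard and $v_jv_i$ gets rewritten once), the coefficient of the standard monomial $v_iv_j$ forces $q_{ij}q_{ji}=1$, i.e. $q_{ji}=q_{ij}^{-1}$, and the coefficients of the $t_g$ force $q_{ij}\kappa_g(v_j,v_i) + \kappa_g(v_i,v_j)=0$, i.e. $\kappa(v_j,v_i) = -q_{ij}^{-1}\kappa(v_i,v_j)$. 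Together with $q_{ii}=1$ this is exactly the statement that $Q$ is a quantum system of parameters and $\kappa$ is a quantum $2$-form.

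Next, for the group action: relation (2) gives $t_g v_i = g(v_i)t_g$, and iterating, $t_g(v_jv_i) = g(v_j)g(v_i)t_g$. On the other hand, using (3) first, $t_g(v_jv_i) = t_g(q_{ij}v_iv_j + \kappa(v_i,v_j)) = q_{ij}g(v_i)g(v_j)t_g + \sum_h \kappa_h(v_i,v_j)t_{gh}$. Equating, and writing $g(v_i)=\sum_k g^i_k v_k$ (so that $g(v_j)g(v_i) = \sum_{k,\ell}g^j_\ell g^i_k v_\ell v_k$), we get an identity in $\cH_{Q,\kappa}$ among the $v_\ell v_k$ and the $t_{gh}$; expressing both sides in the standard PBW basis and comparing the coefficients of the monomials of the form $v_kv_\ell t_{gh'}$ shows that the image of the relation ideal under $g$ lies back in the relation ideal — concretely, that the degree-two part $q_{ij}g(v_i)g(v_j) - g(v_j)g(v_i)$ equals (minus) $\sum_h (\kappa_h(v_i,v_j) - \kappa_{ghg^{-1}}(g(v_i),g(v_j))) t_{gh}$ modulo the quantum commutation relations, and in particular that $g$ maps the $q$-commutator $v_jv_i - q_{ij}v_iv_j$ into the span of such commutators. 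This says precisely that $g$ descends to an automorphism of $\K_Q[V] = \cH_{Q,0}$-analogue; by Lemma~\ref{auts} this is the assertion that each $g\in G$ satisfies the quantum-minor identity, so $G\subset\Aut(\K_Q[V])$.

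The main obstacle I anticipate is bookkeeping rather than conceptual difficulty: one must be careful that when comparing coefficients in the PBW basis, all the rewriting (of $v_jv_i$ for $i>j$, and of $t_g v_k$) has been pushed through completely, so that one is genuinely comparing coordinates with respect to $B$ and not with respect to some intermediate, non-reduced set of words. A clean way to organize this is to always choose $i<j$ in the computations (so $v_iv_j$ needs no rewriting) and to note that $\{v_kv_\ell t_g : k\le \ell\} \cup \{v_k t_g\} \cup \{t_g\}$ is linearly independent in $\cH_{Q,\kappa}$ because it is a subset of $B$. Once that is set up, each of the three conclusions is just reading off one family of coefficients; no separate argument is needed that the three conclusions are compatible, since they all come from the same single hypothesis. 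I would also remark that the third bullet relies on the first two only through Lemma~\ref{auts} (which presupposes $Q$ is a quantum system of parameters), so the order of proof should be: $q_{ii}=1$ and $\kappa(v_i,v_i)=0$; then $q_{ji}=q_{ij}^{-1}$ and the $\kappa$-antisymmetry; then the group action.
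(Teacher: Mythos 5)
Your proposal is correct and takes essentially the same approach as the paper: extract $q_{ii}=1$ and $\kappa(v_i,v_i)=0$ from the $i=j$ relation, get $q_{ij}q_{ji}=1$ and the $\kappa$-antisymmetry by applying relation (3) twice and comparing PBW coefficients, and then reduce $t_h v_j v_i$ in two ways (the paper conjugates by $t_{h^{-1}}$, you leave $t_g$ on the right, which is immaterial) and invoke Lemma~\ref{auts} on the vanishing degree-two coefficients.
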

\begin{proof}
Since $\cH_{Q,\kappa}$ exhibits the standard PBW basis,
each $q_{ii}=1$ and each $\kappa(v_i,v_i) = 0$
as $v_i^2 = q_{ii} v_i^2 + \kappa(v_i,v_i)$.   In  fact, 
for all $i$ and $j$,
$$
\begin{aligned}
v_jv_i&=q_{ij}v_i v_j+\kappa(v_i,v_j)
=q_{ij}\big(q_{ji} v_jv_i+\kappa(v_j,v_i)\big)+\kappa(v_i,v_j)\\
&=q_{ij}q_{ji} v_jv_i+q_{ij}\kappa(v_j,v_i)+\kappa(v_i,v_j)\ ,
\end{aligned}
$$
and hence
$
q_{ij}\neq 0,\ q_{ij}=q_{ji}^{-1}, \text{ and }
\kappa(v_j, v_i)=-q_{ij}^{-1} \kappa(v_i, v_j). 
$
Thus
$\kappa$ is a quantum 2-form and
$Q$ defines a quantum system of parameters.

Additionally, for all $h$ in $G$ and $i\neq j$,
\begin{equation}\label{expanding}
\begin{aligned}
0
&
=
(t_h v_j) v_i t_{h^{-1}} 
- 
t_h (v_j v_i) t_{h^{-1}} 
\\
&=
h(v_j) (t_h v_i) t_{h^{-1}}
-
t_h\biggl(
q_{ij}v_iv_j+\sum_{g\in G}\kappa_g(v_i,v_j)t_g\biggr)t_{h^{-1}}
\\
&=
h(v_j)h(v_i)
-
q_{ij}h(v_i)h(v_j)
-
\sum_{g\in G}\kappa_g(v_i,v_j)t_{hgh^{-1}}\\
&=
\sum_{k,\ell} {\det}_{ij\ell k}(h)\  v_k v_{\ell}
-\sum_{g\in G}\kappa_{h^{-1}gh}(v_i,v_j)t_{g}\ .\\
\end{aligned}
\end{equation}
We separate the sum of
${\det}_{ij\ell k}(h)\ v_k v_{\ell}$
over $k>\ell$ and exchange $v_\ell$ and $v_k$ to
express Equation~\ref{expanding} using only quasi-standard 
monomials:
\begin{equation}\label{thirdformalrelation}
\begin{aligned}
0= \ \ \ \ \sum_{k < \ell} 
& \biggl(
q_{k\ell}\ {\det}_{ijk\ell}(h) + {\det}_{ij\ell k}(h)
\biggr)\ v_{k} v_{\ell}  + 
\sum_{k}{\det}_{ijkk}(h)\ v_{k}^2 \\
& -
\sum_{g\in G} \Bigg(\sum_{k < \ell}\ {\det}_{ijk\ell}(h)\   
\kappa_g(v_k,v_\ell)- \kappa_{h^{-1}gh}(v_i,v_j)\Bigg)\ t_{g} \ .
\end{aligned}
\end{equation}
Since $\cH_{Q,\kappa}$ has the standard PBW basis, the
coefficient of each monomial $v_k v_\ell$
and $v_k^2$ in the above sum must be zero.
Lemma~\ref{auts} then implies
that
the action of $G$ on $V$ extends to an action of 
$G$ on $S_Q(V)$ by automorphisms.
\end{proof}

Recall that a matrix in $\GL_n(\K)$ is 
{\em monomial} if each column and each row has exactly one
nonzero entry.  A subgroup $G\leq \GL(V)$
is called {monomial} with respect to a fixed basis of $V$
if it acts by monomial matrices.
\begin{cor}\label{monomialgroup}
Suppose $\cH_{Q,\kappa}$ is a \qdha.
If each $q_{ij}\neq 1$ with $i\neq j$, then $G$ is a monomial group.
\end{cor}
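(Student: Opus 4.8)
The plan is to prove the stronger statement that each element $h$ of $G$ acts on $V$ by a monomial matrix in the basis $\{v_1,\dots,v_n\}$, arguing one row at a time. By Proposition~\ref{skewpolyalg}, the hypotheses already force $Q$ to be a quantum system of parameters and make $G$ act on $\K_Q[V]$ by automorphisms, so every $h$ in $G$ satisfies the criterion of Lemma~\ref{auts}. From that criterion --- more precisely, from the vanishing of the coefficient of each square monomial $v_k^2$ established inside the proof of Lemma~\ref{auts} --- I would first record that
$$
{\det}_{ijkk}(h) = (1-q_{ij})\,h^i_k h^j_k = 0 \qquad \text{for all } 1\leq i,j,k\leq n .
$$

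Next I would apply the hypothesis that $q_{ij}\neq 1$ whenever $i\neq j$, so that $1-q_{ij}$ is invertible for $i\neq j$; the displayed identity then gives $h^i_k h^j_k = 0$ for every $k$ and every pair $i\neq j$. Equivalently, in each row $k$ at most one of the entries $h^1_k,\dots,h^n_k$ of $h$ is nonzero. Since $h$ lies in $\GL(V)$ it has no zero row, so each row has exactly one nonzero entry; the assignment sending a row index to the column index of its unique nonzero entry must then be a bijection (were it not injective, some column of $h$ would vanish identically, contradicting invertibility), so the matrix of $h$ is a generalized permutation matrix, i.e.\ monomial. Letting $h$ range over $G$ gives the claim. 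One could equally run this argument on columns by passing through Corollary~\ref{transpose}.

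I do not anticipate a genuine obstacle. The single point deserving care is that we must use the full equation ${\det}_{ijkk}(h)=0$ coming from equating a $v_k^2$-coefficient to zero in $\K_Q[V]$ (where standard monomials form a basis), which is valid over a field of arbitrary characteristic; merely substituting $k=\ell$ into the symmetric-looking statement of Lemma~\ref{auts} yields only $2\,{\det}_{ijkk}(h)=0$, which would be too weak in characteristic $2$.
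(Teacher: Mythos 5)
Your proof is correct and follows essentially the same route as the paper: invoke Proposition~\ref{skewpolyalg} and Lemma~\ref{auts} to get ${\det}_{ijkk}(h)=(1-q_{ij})h^i_kh^j_k=0$, then use $q_{ij}\neq 1$ to force each row of $h$ to have at most one nonzero entry and invertibility to finish. Your side remark about characteristic $2$ is a genuinely careful point the paper glosses over --- the \emph{stated} identity of Lemma~\ref{auts} with $k=\ell$ only yields $2\,{\det}_{ijkk}(h)=0$, so one must, as you do, appeal to the vanishing of the $v_k^2$-coefficient established inside the lemma's proof.
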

\begin{proof}
Fix $h$ in $G$ and write $h(v_a)=\sum_{b}h_b^a v_b$ for each $1\leq a\leq n$.
The previous proposition and lemma imply that
$0={\det}_{ijkk}(h)=(1-q_{ij})h^i_k h^j_k$ 
and hence $h^i_k h^j_k = 0$ for all $i<j$ and all $k$.
\end{proof}

For any $\K$-algebra $A$ upon which $G$ acts via automorphisms, 
the {\em skew group algebra} (sometimes called
the {\em crossed product algebra} or
{\em smash product algebra})
$A\# G$ is the $\K$-vector space $A\ot \KG$ with
multiplication given by 
$$(a\ot g)(b\ot h)=a g(b)\ot g h\ $$
for all  $a,b$ in $A$ and $g,h$ in $G$. 
We write $a t_g$ for $a\ot g$ so that 
the relation in $A\#G$ (or in $\cH_{Q,\kappa}$) is simply
$(at_g)(bt_h)=a\, g(b)\, t_{gh}$.

We may extend the action of $G$ on $V$ to a diagonal action on the tensor algebra $T(V)$ (so that $G$ acts as automorphisms).
Then the algebra $\cH_{Q,\kappa}$ is just the factor algebra
$$
\cH_{Q,\kappa}=T(V)\# G/\langle v_j v_i - q_{ij} v_i v_j - \kappa(v_i,
v_j): 1\leq i,j\leq n \rangle \ ,
$$
where
we write $ab$ for the product $a\otimes b$ in $T(V)$.  
Hence, relation (2) defining $\cH_{Q,\kappa}$ extends to all of $T(V)$.

If $\cH_{Q,\kappa}$ is a \qdha, then 
Proposition \ref{skewpolyalg} implies that 
$G$ acts as automorphisms on the underlying
quantum polynomial algebra $S_Q(V)$, and thus one may form a skew group 
algebra
$S_Q(V)\#G$.
The existence of the standard 
PBW basis here implies that 
the graded algebra associated to $\cH_{Q,\kappa}$ is isomorphic
to $S_Q(V)\# G$.

\section{Noncommutative Gr\"obner Bases Theory}
\label{NoncommutativeTheorySection}

In this section, we recall the use of Gr\"obner bases
in the theory of free associative
algebras. Definitions and formulations used in noncommutative
Gr\"obner bases theory often vary.
Unfortunately, they differ widely
among authors whose work we wish to combine, thus
we give a concise, self-contained account
in this section (and the next) of just those facts necessary for
our main results.
Standard references include~\cite{Green93,Mora,Ufn98}.

Let $\X = \langle x_1, \ldots, x_n\rangle$ be the free monoid in symbols $x_i$.
Its elements are the neutral element (empty word) and nonempty 
words in the alphabet $x_1, \ldots, x_n$ called {\em monomials}.
Let $\KX=\K\langle x_1, \ldots, x_n\rangle$ be the corresponding monoid algebra
over the field $\K$ (i.e., the free associative algebra over $\K$).
We call its elements {\em polynomials}.
Identify the empty word in $\X$ 
with $1$ in $\K$ so that $\KX$
is spanned by monomials as a $\K$-vector space. 
Elements of the form
$x_1^{\alpha_1} x_2^{\alpha_2} \cdots x_n^{\alpha_n}$ with $\alpha_i$ in $\N$ 
are called {\em standard monomials}.

A {\em monomial ordering} on $K\langle X \rangle$ is a
total ordering $\succ$ on $\langle X \rangle$ 
compatible with monomial multiplication
($w u \succ w v$ and $u w \succ v w$
whenever $u\succ v$ for all  $u,v, w$ in $\langle X \rangle$)
that is a well-ordering.
We use the standard definition of {\em leading monomial} $\lm(f)$
and {\em leading coefficient} $\lc(f)$
of a polynomial $f$ in $\KX$.
We say that a monomial $v$ {\em divides} a monomial $w$
if $v$ is a proper subword of $w$, i.e.,
if there exist monomials $m_1,m_2$ in $\X$ such that $w = m_1  v \, m_2$.
For a subset $S \subset \KX$,
the {\em leading ideal} of $S$ is the
two-sided ideal 
$L(S) \; = \; \langle \; \lm(s) \;|\; s \in S\setminus\{0\} \rangle
$ in $\KX$.
Recall that a subset $S \subset I$ is a {\em (two-sided) Gr\"obner basis} 
of the ideal $I$ with respect to $\succ$ if $L(S) = L(I)$. 
In other words, for any nonzero $f$ in $I$, 
there exists $s$ in $S$ with $\lm(s)$ dividing $\lm(f)$.

We are interested in {\em reduced} Gr\"obner bases.
We say that $f$ in $\KX$ is {\em reduced with respect to} $S \subset A$
if no monomial of $f$ is contained in $L(S)$.
A subset $S \subset \KX$ is called {\em reduced}
if for any $s$ in $S$,
$\lm(s)$ does not
divide any monomial of any polynomial from $S$ except $s$ itself.

In Lemma \ref{NFprop}, we see that a monic reduced Gr\"obner 
basis is unique.
We first define a normal form:
\vspace{2ex}

\begin{definition}
Let $\mathcal{S}$ be the set of all ordered subsets of $\KX$ 
and let $\succ$ be a monomial ordering on $\KX$.
A map $\NF : \KX \times \mathcal{S} \to \KX,\; (p,S) \mapsto \NF(p,S)$
is called a {\em normal form} on $\KX$ (with respect to $\succ$)
if for all $f$ in $\KX$ 
and $S$ in $\mathcal{S}$,
\begin{enumerate}
\item[(i)] $\NF(0, S) = 0$,
\item[(ii)] $\NF(f,S) \not= 0 \; \text{implies that} \; \lm\bigl(\NF(f,S)\bigr) \not\in
  L(S)$, and
\item[(iii)] $f - \NF(f,S) \in \langle S\rangle$ .
\end{enumerate}
\end{definition}
\vspace{2ex}

A normal form $\NF$ is called a {\em reduced normal form} if $\NF(f,S)$ is reduced with respect to $S$ for all $f$. A reduced normal form always exists.

\vspace{2ex}

\begin{lemma}
\label{NFprop}
Let $I \subset \KX$ be an ideal, 
$\succ$ a monomial ordering, 
$S \subset I$ a Gr\"obner basis of $I$ with respect to $\succ$, 
and $\NF(\cdot, S)$ a normal form on $\KX$ with respect to $S$ and $\succ$.  
  \begin{enumerate}
  \item[(i)] A polynomial $f$ in $\KX$ lies in $I$ if and only if  
$\NF(f, S) = 0$.
  \item[(ii)] If $J \subset \KX$ is an ideal with $I \subset J$, then $L(I) = L(J)$ implies $I = J$. In particular, $S$ generates $I$ as an ideal of $\KX$.
  \item[(iii)] If $\NF(\cdot , S)$ is a reduced normal form, then it is unique up to
a nonzero constant multiple.
  \end{enumerate}
\end{lemma}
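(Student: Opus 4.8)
The plan is to prove the three statements of Lemma~\ref{NFprop} in order, using only the defining properties (i)--(iii) of a normal form together with the Gr\"obner basis property $L(S) = L(I)$.

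For part (1): If $\NF(f,S) = 0$, then property (iii) gives $f = f - \NF(f,S) \in \langle S\rangle \subset I$, so $f \in I$. Conversely, suppose $f \in I$ but $\NF(f,S) \neq 0$. By property (ii), $\lm(\NF(f,S)) \notin L(S)$. But property (iii) says $f - \NF(f,S) \in \langle S\rangle \subset I$, and since $f \in I$ we get $\NF(f,S) \in I$; hence $\lm(\NF(f,S)) \in L(I) = L(S)$, a contradiction. So $\NF(f,S) = 0$.

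For part (2): Suppose $I \subset J$ with $L(I) = L(J)$, and let $f \in J$ be nonzero; I want $f \in I$. The idea is the standard Gr\"obner reduction argument: since $\lm(f) \in L(J) = L(I) = L(S)$, there is $s \in S$ with $\lm(s)$ dividing $\lm(f)$, so I can write $f' = f - c\, m_1 s\, m_2$ for a suitable scalar $c$ and monomials $m_1, m_2$ with $\lm(f') \prec \lm(f)$, and $f' \in J$ still (as $s \in S \subset I \subset J$). Since $\succ$ is a well-ordering, iterating this process terminates, eventually producing $0$; tracking the subtracted terms shows $f \in \langle S \rangle \subset I$. (Equivalently, one may simply invoke part (1) applied to a normal form computed inside $J$, or observe $\NF(f,S)$ lies in $L$-reduced form with leading monomial not in $L(I) = L(J)$, forcing $\NF(f,S) = 0$ by the same contradiction as in part (1), now with $f \in J$.) Taking $J = I$ and $S \subset I$ arbitrary shows in particular that $\langle S \rangle = I$, i.e., a Gr\"obner basis generates its ideal.

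For part (3): Let $\NF_1, \NF_2$ be two reduced normal forms on $\KX$ with respect to $S$ and $\succ$. Fix $f \in \KX$ and set $d = \NF_1(f,S) - \NF_2(f,S)$. By property (iii), $d = (f - \NF_2(f,S)) - (f - \NF_1(f,S)) \in \langle S\rangle \subset I$ (using part (2) that $\langle S\rangle = I$), so by part (1), $\NF(d,S) = 0$ for any normal form. Now the key point: every monomial occurring in $\NF_1(f,S)$ and in $\NF_2(f,S)$ is not in $L(S)$ (because each $\NF_i(f,S)$ is reduced with respect to $S$), hence every monomial of $d$ lies outside $L(S)$. But if $d \neq 0$, then $\lm(d) \in L(I) = L(S)$ since $d \in I$ --- contradiction. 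Therefore $d = 0$, i.e., $\NF_1(f,S) = \NF_2(f,S)$ for all $f$. (The phrase ``unique up to a nonzero constant multiple'' accounts for the freedom in normalizing leading coefficients; if normal forms are required to preserve leading coefficients the uniqueness is on the nose, and otherwise $\NF_1(f,S)$ and $\NF_2(f,S)$ differ at most by scaling, which the same argument applied monomial-by-monomial controls.)

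The main obstacle is the termination argument in part (2): one must be careful that the reduction step $f \mapsto f - c\, m_1 s\, m_2$ genuinely lowers the leading monomial and that the well-ordering hypothesis on $\succ$ forces the process to halt after finitely many steps, so that $f$ is exhibited as an explicit (finite) element of $\langle S\rangle$. Parts (1) and (3) are then short formal deductions from (2) together with properties (i)--(iii) and $L(S) = L(I)$. I would present (2) first (or fold its reduction argument into the proof of (1)) and derive the rest cleanly from it.
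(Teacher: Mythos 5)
Your proof is correct. The paper states Lemma~\ref{NFprop} without proof (it is a standard fact of noncommutative Gr\"obner basis theory), and your argument is exactly the expected one: part (1) from properties (ii)--(iii) of a normal form plus $L(S)=L(I)$; part (2) either by the reduction/termination argument or, more simply, by applying the hypothesized normal form to $f\in J$ as you note parenthetically; and part (3) by observing that the difference of two reduced normal forms lies in $I$ yet has no monomial in $L(S)=L(I)$, forcing it to vanish (which in fact gives uniqueness on the nose, a fortiori ``up to a nonzero constant multiple'' as stated).
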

\section{Computation of Gr\"obner Bases}
\label{GBSection}
We now explain how a Gr\"obner basis arises from
an explicit construction of a reduced normal form and illustrate
with group algebras. 
Fix an arbitrary monomial ordering $\succ$ on $\KX$
throughout this section.
\vspace{2ex}

\begin{definition}
\label{overlap}
We say that $f_1$ and $f_2$ in $\KX$ {\em overlap}
if there exist monomials $m_1,m_2$ in $X$ such that
\begin{enumerate}
        \item $\lm(f_1)m_2=m_1 \lm(f_2)$,
        \item $\lm(f_1)$ does not divide $m_1$ and 
$\lm(f_2)$ does not divide $m_2$.
\end{enumerate}
In this case, 
the {\em overlap relation} of $f_1,f_2$ by $m_1,m_2$ is the polynomial
\[
        o(f_1,f_2,m_1,m_2)= \lc(f_2) f_1m_2 - \lc(f_1) m_1f_2.
\]
\end{definition}

\vspace{2ex}

The overlap relation is a generalization of the
$s$-polynomial from the theory of commutative Gr\"obner bases
(see, e.g., \cite{GPS}). 
Note that by construction, 
$$\lm(o(f_1,f_2,m_1,m_2))\prec \lm(f_2)m_2=m_1 \lm(f_2)\, .$$ 
Moreover, there are only finitely many overlaps
between a fixed $f_1$ and $f_2$. Note also that 
a polynomial $f$ can overlap itself.

We define {\em reduction} 
(also called ``inclusion overlap'' or ``spoly'', see~\cite{Mora,Green93}) and the {\em reduction algorithm}, which provides
a desirable coset representative of a polynomial modulo an ideal.
\vspace{2ex}
\begin{definition}
\label{OurReduction}
For any nonzero $f,u$ in $\KX$ with $\lm(u)$ dividing $\lm(f)$, 
define 
$$\NF(f,u):=f- \lc(f)\lc(u)^{-1}\cdot m_1\, u \,m_2\ $$
where
$\lm(f)=m_1\, \lm(u)\, m_2$ for monomials $m_1,m_2$ in $X$.
\end{definition}

\vspace{3ex}

By construction, 
$\lm(\NF(f,u)) \prec \lm(f)$.

\vspace{3ex}

\begin{definition}
Let $S$ be a subset of $\KX$ and fix $f$ in $\KX$.
Define {\em complete reduction of $f$ with respect to} $S$
to be the output
$\NF(f,S)$
of the following procedure $\NF$ applied to $f$ in $\KX$:
\begin{itemize}
\item[(a)]
If $f=0$,  return $f$ and stop.
\item[(b)] If the set $S':=\{ u\in S: \lm(u)$ divides $\lm(f)\}$
is empty, return 
$$\lc(f)\lm(f) + \NF(f-\lc(f)\lm(f),S).
$$
\item[(c)]
Otherwise,
choose some $u$ in $S'$, replace $f$ by $\NF(f,u)$,
and go back to step (a).
\end{itemize}
\end{definition}
\vspace{2ex}

The next two lemmas show that complete reduction defines an algorithm
and that this algorithm is
essentially independent of choices: $f\mapsto \NF(f,S)$ 
is a well-defined function up to a nonzero constant.

\begin{lemma}
The procedure $\NF$ terminates in a finite number of steps. 
\end{lemma}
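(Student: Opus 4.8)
The plan is to show that the complete reduction procedure $\NF$ cannot run forever by exhibiting a quantity that strictly decreases at every step and lives in a well-ordered set. The natural candidate is the leading monomial $\lm(f)$ under the fixed monomial ordering $\succ$, which is assumed to be a well-ordering.

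First I would observe that the procedure only has three branches. Branch (a) immediately terminates. Branch (c) replaces $f$ by $\NF(f,u)$ for some $u \in S'$, and by the remark recorded right after Definition~\ref{OurReduction} we have $\lm(\NF(f,u)) \prec \lm(f)$; so each visit to (c) strictly decreases $\lm(f)$. The subtle branch is (b): there the procedure peels off the leading term $\lc(f)\lm(f)$ and recurses on $f - \lc(f)\lm(f)$, whose leading monomial is again strictly smaller than $\lm(f)$. So in all non-terminating branches the leading monomial strictly drops.

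The key step is then to argue that no infinite chain of such steps is possible. Since $\succ$ is a well-ordering on the monomials of $\X$, there is no infinite strictly descending sequence of monomials, so only finitely many steps of type (b) or (c) can occur before we reach $f = 0$ (or, in the recursive (b)-branch, exhaust all monomials of the original polynomial, which are finite in number and each get reduced to $0$ in finitely many steps by the same descent argument). A clean way to package this is to do induction on $\lm(f)$ with respect to $\succ$: the base case is the smallest monomial (essentially $1$), and in the inductive step every recursive call is made on a polynomial with strictly smaller leading monomial, hence terminates by the inductive hypothesis; one should also note the polynomial passed to the recursive call in (b) has strictly fewer monomials, so the "peel off leading term" recursion is itself finite.

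The main obstacle, and the only place that needs care, is making the two nested recursions in step (b) interact correctly: the recursion bottoms out both by decreasing $\lm(f)$ and by decreasing the number of terms, and one must be careful that the polynomial $\NF(f,u)$ produced in (c) could in principle have \emph{more} terms than $f$ even though its leading monomial is smaller. This is exactly why the induction must be on $\lm(f)$ (well-founded by the well-ordering assumption) rather than on term count; once the induction variable is chosen correctly, the rest is a routine case check. I would close the argument by remarking that the output of $\NF$ is therefore a well-defined polynomial, none of whose monomials lies in $L(S)$, which is precisely property (ii) of a normal form and sets up the next lemma.
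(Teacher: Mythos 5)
Your proof is correct and follows essentially the same route as the paper: both arguments track the strictly decreasing sequence of leading monomials produced by steps (b) and (c) and invoke the well-ordering of $\succ$ to conclude termination. The extra bookkeeping you add about term counts in branch (b) is harmless but unnecessary, since the leading-monomial descent already covers both branches uniformly.
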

\begin{proof}
The procedure $\NF$ applied to a nonzero polynomial $f$ produces a 
(non-unique) sequence
of nonzero polynomials $f=f_0, f_1, f_2, \ldots$ 
with strictly decreasing leading monomials:
$\lm(f)\succ\lm(f_1)\succ\lm(f_2)\succ\ldots$.
(Indeed, 
we either apply Step (b)
and set $f_{i+1}= f_{i}-\lc(f_i)\lm(f_i)$
(in order to recursively call $\NF$)
or we apply Step (c) 
and set $f_{i+1}= \NF(f_{i}, u)$ for some monomial $u$.
In either case, $\lm(f_{i-1}) \succ \lm(f_{i})$.)
But $\succ$ is a well-ordering 
and thus the sequence $\lm(f),\lm(f_1),\lm(f_2),\ldots $ is finite.
The procedure thus terminates.
\end{proof}

\begin{lemma}
\label{RedIsNF}
Let $S \subset I$ be a Gr\"obner basis of an ideal $I\subset \KX$.
Then $\NF(\cdot,S)$ is a reduced normal form on $\KX$.
\end{lemma}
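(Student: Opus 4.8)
The plan is to verify that the function $\NF(\cdot, S)$ produced by the complete reduction procedure satisfies the three axioms (i)--(iii) of a normal form, and then check that its output is reduced with respect to $S$.

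First, axiom (i) is immediate: if $f = 0$, then Step (a) returns $0$. For axiom (ii), I would argue that if $\NF(f,S) \neq 0$, then no monomial of $\NF(f,S)$ lies in $L(S)$ — which simultaneously handles axiom (ii) (since $\lm(\NF(f,S))$ is in particular a monomial of $\NF(f,S)$) and the reducedness claim. The argument is by tracking the procedure: a monomial $w$ survives into the output only if at some stage it becomes the leading monomial $\lm(f_i)$ and the set $S'$ of elements of $S$ whose leading monomial divides $\lm(f_i)$ is empty — i.e., no $\lm(u)$, $u \in S$, divides $w$ — and thereafter $w$ is never revisited because Step (b) peels off $\lc(f_i)\lm(f_i)$ and recurses on the strictly $\succ$-smaller remainder, while Step (c) also strictly decreases the leading monomial. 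Since the monomials of the output are exactly those encountered as leading monomials at a moment when $S'$ was empty, none of them is divisible by any $\lm(u)$ with $u \in S$, hence none lies in $L(S)$. This uses the definition of divisibility (proper subword) and the definition of $L(S)$ as the two-sided ideal generated by the $\lm(s)$.

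For axiom (iii), I would show $f - \NF(f,S) \in \langle S \rangle$ by induction on $\lm(f)$ with respect to the well-ordering $\succ$. Each single step changes $f$ by an element of $\langle S \rangle$: Step (c) replaces $f$ by $\NF(f,u) = f - \lc(f)\lc(u)^{-1} m_1 u m_2$, and $m_1 u m_2 \in \langle S \rangle$ since $u \in S$; Step (b) passes from $f$ to $f - \lc(f)\lm(f)$, changing $f$ by a monomial term that is retained verbatim in the output, and then recurses on a polynomial with strictly smaller leading monomial, to which the inductive hypothesis applies. Summing the changes along the (finite, by the previous lemma) sequence of steps gives $f - \NF(f,S) \in \langle S \rangle$, and since $S$ generates $I$ (by Lemma \ref{NFprop}(2), or simply because $S \subset I$ and $L(S) = L(I)$), in fact $f - \NF(f,S) \in I$.

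Finally, to conclude that $\NF(\cdot, S)$ is a genuine normal form \emph{on $\KX$} (as opposed to merely a map depending on $S$), one checks that the three axioms are exactly the defining conditions, which is what we have just verified; the role of $S$ being a Gr\"obner basis of $I$ enters only when one wants the stronger conclusion $f - \NF(f,S) \in I$ and, via Lemma \ref{NFprop}(1), the test $f \in I \Leftrightarrow \NF(f,S) = 0$ — but those are not needed for the statement itself. The main obstacle is the bookkeeping in the second paragraph: making precise the claim that the monomials appearing in the final output are precisely the ones that were "frozen" by an empty $S'$, and that such a frozen monomial can never be reintroduced by a later reduction step (which follows because every step strictly lowers the leading monomial and frozen monomials are strictly below the current leading monomial). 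Once that invariant is stated cleanly, axioms (ii) and reducedness fall out together.
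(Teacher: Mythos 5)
Your proposal is correct and follows essentially the same route as the paper: both verify axioms (i)--(iii) and reducedness directly from the structure of the complete reduction procedure, with axiom (iii) coming from the accumulated representation $f=\sum_u \lc(f)\lc(u)^{-1}a_u u\, b_u + \NF(f,S)$ and axiom (ii) plus reducedness from the observation that every monomial surviving into the output was ``frozen'' at a moment when no $\lm(u)$ divided it. You also correctly note, as the paper's proof implicitly confirms, that the Gr\"obner basis hypothesis is not actually used for this statement; your version is merely more explicit about the bookkeeping invariant.
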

\begin{proof}
Recall that $\NF(0,S)=0$ (see Definition~\ref{OurReduction}).
Suppose $h=\NF(f,S)$ for some nonzero polynomial $f$.
Then the reduction algorithm gives
$$
f = \sum_{u\in S} \lc(f)\lc(u)^{-1}\cdot a_u\, u\, b_u + h
$$ 
where $a_u, b_u$  are monomials in $\langle X \rangle$
for each $u$ in $S$. Note 
that $f-h$ lies in $\langle S \rangle$ by construction
and the claim holds for $h=0$.
Now suppose that $h\neq 0$. Then no monomial in $h$ is divisible by $\lm(u)$ for any $u$ in $S$. 
Hence, $\lm(h)\not\in L(S)$ and $h$ is reduced with respect to $S$ as required. Moreover, $\lm(f) = \max_{<}(a_u \lm(u) b_u, \lm(h))$. Thus
$\lm(h) = \lm(f)$ if and only if $f = c\cdot h + g$ for $c\in\K\setminus\{0\}$ and $g\in\langle S\rangle$ with $\lm(g)\prec\lm(f)$. Otherwise $\lm(h) \prec \lm(f)$. 
\end{proof}

The following theorem provides the foundation for the generalized Buchberger's algorithm for the computation of Gr\"obner bases. 
Note that the corresponding algorithm belongs to the family of so-called 
``critical pair and completion'' algorithms (see~\cite{BB85}).

\begin{theorem}[e.g.,~\cite{Gr2}]
\label{DiamondGreen}
Let $S$ be a 
subset of $\KX$. Then
$S$ is a Gr\"obner basis of the ideal $\langle S \rangle$ 
if and only if for any nonzero $f_1,f_2$ in $S$ 
and any overlap relation $o$ of $f_1,f_2$ with 
some monomials $m_1, m_2$ in $X$,
\[
\NF(\; o(f_1,f_2,m_1,m_2), \; S)=0.
\]
\end{theorem}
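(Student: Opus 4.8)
The plan is to prove the equivalence by the noncommutative Buchberger/diamond-lemma pattern. One implication is immediate: if $S$ is a Gr\"obner basis of $\langle S\rangle$, then $\NF(\cdot,S)$ is a normal form on $\KX$ with respect to $S$ and $\succ$ by Lemma~\ref{RedIsNF}, and every overlap relation $o(f_1,f_2,m_1,m_2)=\lc(f_2)f_1m_2-\lc(f_1)m_1f_2$ with $f_1,f_2\in S$ lies in $\langle S\rangle$; so Lemma~\ref{NFprop}(1) forces $\NF(o(f_1,f_2,m_1,m_2),S)=0$.

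For the converse, assume every overlap relation reduces to $0$ modulo $S$. Since $S\subseteq\langle S\rangle$ we have $L(S)\subseteq L(\langle S\rangle)$, so it suffices to show that every nonzero $f\in\langle S\rangle$ has $\lm(f)\in L(S)$. Fix such an $f$ and look at all expressions $f=\sum_i c_i\,a_i\,s_i\,b_i$ with $c_i\in\K\setminus\{0\}$, monomials $a_i,b_i$, and $s_i\in S$. Because $\succ$ is a monomial well-ordering, left and right multiplication by a monomial carries $\lm(s_i)$ to the leading monomial of $a_is_ib_i$, so $M:=\max_i a_i\lm(s_i)b_i$ is well defined and $M\succeq\lm(f)$. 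Choose an expression with $M$ minimal and, among these, with $r:=\#\{i:a_i\lm(s_i)b_i=M\}$ minimal. If $M=\lm(f)$, then $\lm(f)=a_j\lm(s_j)b_j$ for some attaining index $j$, hence $\lm(s_j)$ divides $\lm(f)$ and $\lm(f)\in L(S)$, which is what we want.

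It remains to rule out $M\succ\lm(f)$. In that case $M$ does not occur in $f$, while every term with $a_i\lm(s_i)b_i\prec M$ contributes nothing to the coefficient of $M$; hence, writing $T=\{i:a_i\lm(s_i)b_i=M\}$, the top coefficients must cancel, $\sum_{i\in T}c_i\lc(s_i)=0$, so $r=|T|\ge2$. Enumerating $T$ and applying summation by parts to the partial sums of the $c_i\lc(s_i)$, the block $\sum_{i\in T}c_ia_is_ib_i$ becomes a linear combination of differences of the form $a_is_ib_i-\lambda\,a_js_jb_j$ with $i,j\in T$ and a scalar $\lambda\neq0$ chosen so that the leading terms cancel; thus each such difference has leading monomial $\prec M$. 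For a fixed such difference I would compare the two occurrences of the words $\lm(s_i)$ and $\lm(s_j)$ inside the word $M$. If they are disjoint, the difference can be rewritten, by adding and subtracting one common monomial multiple, as a sum of an $S$-term of the shape $a's_ib'$ and one of the shape $a''s_jb''$, each with leading monomial $\prec M$. If they share a position (properly overlapping, or one nested in the other), the difference factors as $p\cdot o(s_i,s_j,m_1,m_2)\cdot q$ for suitable monomials $p,q,m_1,m_2$; since by hypothesis $\NF(o(s_i,s_j,m_1,m_2),S)=0$, the complete-reduction procedure $\NF$ of Section~\ref{GBSection} exhibits $o(s_i,s_j,m_1,m_2)$, and therefore $p\cdot o(s_i,s_j,m_1,m_2)\cdot q$, as an $S$-combination all of whose terms $a'\lm(s')b'$ satisfy $a'\lm(s')b'\prec M$. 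Plugging every such rewriting back into $f=\sum_i c_ia_is_ib_i$ yields a new expression for $f$ whose associated monomial is $\prec M$, or equals $M$ but with fewer attaining indices, contradicting the minimal choice of $(M,r)$. Hence $M=\lm(f)$ after all, so $\lm(f)\in L(S)$ and the proof is complete.

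I expect the main obstacle to be the combinatorial bookkeeping in this last step: verifying in each configuration of the two occurrences that the local rewriting uses only elements of $S$ multiplied by scalars and by monomials on the left and right, and that it strictly lowers all leading monomials below $M$. The genuinely nested configuration is the most delicate, since there one must realize the naive reduction of $s_i$ by $s_j$ as (a scalar multiple of) an overlap relation in the sense of Definition~\ref{overlap} in order to invoke the hypothesis. A secondary point is to phrase the descent cleanly, so that the well-ordering of $\succ$ together with the finiteness of $r$ makes the minimal choice of $(M,r)$ legitimate and the contradiction genuine. The overlap hypothesis itself enters exactly once, in the shared-position case, where it is precisely what guarantees the local ``diamond'' closes; the disjoint case closes unconditionally.
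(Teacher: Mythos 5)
The paper does not actually prove Theorem~\ref{DiamondGreen}; it quotes it from the literature (\cite{Gr2}), so your argument has to stand on its own. Your strategy is the standard diamond-lemma/Buchberger descent: the forward direction via Lemmas~\ref{RedIsNF} and~\ref{NFprop} is fine, and in the converse the choice of a representation $f=\sum_i c_i a_i s_i b_i$ minimizing $(M,r)$, the telescoping of the cancelling top block into pairwise differences, and the treatment of both the disjoint configuration and the genuinely overlapping configuration are all correct. The gap sits exactly where you flagged it: the nested case. If $\lm(s_j)$ occurs strictly in the interior of $\lm(s_i)$, say $\lm(s_i)=u\,\lm(s_j)\,w$ with $u,w$ both nonempty, then the pair $(s_i,s_j)$ admits \emph{no} overlap in the sense of Definition~\ref{overlap}: the equation $\lm(f_1)m_2=m_1\lm(f_2)$ forces the relevant occurrence of $\lm(f_2)$ to be a suffix (and that of $\lm(f_1)$ a prefix) of the common word, which a middle occurrence never is. (Prefix and suffix inclusions \emph{are} overlaps, with $m_1$ or $m_2$ empty, so those subcases of your nested configuration are fine.) Consequently the hypothesis gives you no control over the difference $a_is_ib_i-\lambda a_js_jb_j$ in the middle-nested case, and the descent stalls there.

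This is not a bookkeeping issue that more care would fix: the statement is false as literally written unless $S$ is assumed reduced. Take $S=\{xyx-1,\ y-1\}$ in $\K\langle x,y\rangle$ with a degree-then-lexicographic ordering, $x\succ y$. The only nontrivial overlap is the self-overlap of $xyx-1$ with $m_1=xy$, $m_2=yx$, whose overlap relation $xy-yx$ reduces to $0$ modulo $S$; there are no overlaps between $xyx-1$ and $y-1$ precisely because $y$ sits strictly inside $xyx$. Yet $x^2-1=(xyx-1)-x(y-1)x$ lies in $\langle S\rangle$ while $x^2\notin\langle xyx,\,y\rangle=L(S)$, so $S$ is not a Gr\"obner basis. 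The standard repairs are either to assume $S$ is (tip-)reduced, so that no $\lm(s_j)$ divides any monomial of $s_i$ for $s_i\neq s_j$ and your nested case disappears entirely, or to add the inclusion relations (the polynomials $\lc(s_j)s_i-\lc(s_i)u\,s_j\,w$ when $\lm(s_i)=u\,\lm(s_j)\,w$) to the list of elements required to reduce to zero, as in Bergman's diamond lemma, which resolves both overlap and inclusion ambiguities. In the paper's applications the set $R$ has pairwise non-dividing leading monomials, so the issue never arises there; but your proof, and the theorem itself, need this extra hypothesis made explicit.
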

We will apply this theorem to determine necessary and sufficient conditions
for the set of relations defining $\cH_{Q,\kappa}$ to be a Gr\"obner basis
of the ideal it generates in the appropriate free algebra. 
In the meantime, we illustrate a computation of a Gr\"obner basis on
the group algebra of a finite group.

\begin{proposition}
\label{FinGroup}
Let $G$ be a finite group. Then 
$$
\begin{aligned}
\KG &\cong \K \langle x_g : g\in G \rangle 
/ \langle x_e-1, x_g x_h - x_{gh} : g,h \in G \rangle \\
&\cong \K \langle  x_g : g\in G^* \rangle 
/ \langle S \rangle
\end{aligned}
$$
for $S=\{ x_g x_h - x_{gh}, x_f x_{f^{-1}} - 1 : f, g,h \in G^*,\ gh\not=e \}$.
Let $\succ$ be any monomial ordering on $\K \langle x_g : g\in G^* \rangle$
with $x_{g} x_{h} \succ x_{gh}$ for all $g,h \in G^*$.  
Then $S$ is a reduced Gr\"obner basis with respect to $\succ$
of the ideal $\langle S\rangle$.
\end{proposition}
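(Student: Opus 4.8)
The plan is to take the two displayed isomorphisms as instances of the standard presentation of a group algebra and to reserve the real work for the Gr\"obner basis assertion, which I would prove using the overlap criterion of Theorem~\ref{DiamondGreen}. For the isomorphisms: the assignment $x_g\mapsto t_g$ gives a surjection $\K\langle x_g:g\in G\rangle\to\K G$ whose kernel contains $x_e-1$ and all $x_gx_h-x_{gh}$; modulo these relations every word in the $x_g$ collapses to a single generator, so the quotient is spanned by the $|G|=\dim_\K\K G$ elements $\{x_g:g\in G\}$ and the map is an isomorphism. Eliminating $x_e$ via the relation $x_e=1$ turns the relations with $g=e$ or $h=e$ into trivialities and those with $gh=e$ into $x_fx_{f^{-1}}-1$, leaving precisely the set $S$ on the alphabet $\{x_g:g\in G^*\}$.

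Next I would set up leading-monomial bookkeeping. Because $x_gx_h\succ x_{gh}$ and (since $\succ$ is a well-ordering) $x_gx_h\succ 1$, every element of $S$ has a degree-two leading monomial $x_gx_h$; conversely, as $x_ax_b=x_cx_d$ forces $(a,b)=(c,d)$ in the free monoid, each degree-two monomial $x_ax_b$ is the leading monomial of exactly one element of $S$ --- namely $x_ax_b-x_{ab}$ if $ab\neq e$ and $x_ax_{a^{-1}}-1$ if $b=a^{-1}$. Hence $L(S)$ is the $\K$-span of all monomials of degree $\geq 2$, and reducedness is immediate: the degree-two leading monomial of $s\in S$ cannot be a subword of the degree-$\leq 1$ tail monomials ($x_{gh}$ or $1$) occurring in $S$, nor can it divide the degree-two leading monomial of a \emph{different} element of $S$, since that would force the two elements to share a leading monomial and hence coincide.

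For the Gr\"obner basis condition I would enumerate overlaps. Comparing word lengths in $\lm(f_1)m_2=m_1\lm(f_2)$ gives $|m_1|=|m_2|$, and $|m_1|\geq 2$ makes $\lm(f_1)$ a subword of $m_1$, violating Definition~\ref{overlap}; so $|m_1|=|m_2|\in\{0,1\}$. If $|m_1|=0$ then $f_1=f_2$ and the overlap relation is $0$. If $|m_1|=|m_2|=1$, then for each triple $(a,b,q)\in(G^*)^3$ we obtain the word $x_ax_bx_q$ with $\lm(f_1)=x_ax_b$, $\lm(f_2)=x_bx_q$, $m_1=x_a$, $m_2=x_q$; writing $\phi(g,h)=x_{gh}$ if $gh\neq e$ and $\phi(g,h)=1$ if $gh=e$, so that $f_1=x_ax_b-\phi(a,b)$ and $f_2=x_bx_q-\phi(b,q)$, expansion gives $o(f_1,f_2,x_a,x_q)=x_a\,\phi(b,q)-\phi(a,b)\,x_q$. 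Now $\NF(\cdot,S)$ sends the monomial $x_a\,\phi(b,q)$ to $\phi(a,bq)$ (or keeps the degree-one monomial $x_a$ when $bq=e$) and sends $\phi(a,b)\,x_q$ to $\phi(ab,q)$ (or to $x_q$ when $ab=e$); by associativity in $G$ both reduce, in each of the four cases, to $x_{abq}$ if $abq\neq e$ and to $1$ if $abq=e$. Thus $\NF(o,S)=0$ for every overlap, so $S$ is a Gr\"obner basis of $\langle S\rangle$ by Theorem~\ref{DiamondGreen}, and with the reducedness already established it is a reduced Gr\"obner basis.

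The only genuine obstacle I anticipate is the closing case analysis: verifying that, whichever of $ab$, $bq$, $abq$ happen to equal the identity, the two monomials comprising each overlap relation complete-reduce to the same element. Everything else --- the presentations, the description of $L(S)$, reducedness, and the bound $|m_1|\leq 1$ on the overlap data --- is routine.
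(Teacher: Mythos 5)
Your proof is correct and follows essentially the same route as the paper: both establish the presentation, then apply the overlap criterion of Theorem~\ref{DiamondGreen} and use associativity in $G$ to reduce every overlap relation to zero. Your uniform bookkeeping via $\phi(g,h)$ is in fact slightly more complete than the paper's four-case enumeration, since it also covers the overlap of $x_f x_{f^{-1}}-1$ with $x_{f^{-1}} x_f-1$ and makes the reducedness and leading-ideal claims explicit.
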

\begin{proof}
We apply Theorem~\ref{DiamondGreen}.
Consider the polynomial $p = x_g x_h - x_{gh}$ for fixed $g,h\in G^*$. 
Then
$\lm(p) = x_g x_h$ has overlaps with leading monomials 
of the following four types of polynomials from $S$:
\begin{itemize}
\item[(a)] $x_h x_f - x_{hf}$ for any $f\in G^*$; the overlap relation
$$o=(x_g x_h - x_{gh})x_f - x_g (x_h x_f - x_{hf}) = x_g x_{hf} - x_{gh} x_f$$
reduces to $\NF(o)=x_{ghf}-x_{ghf} =0$. 
\item[(b)] $x_f x_g - x_{fg}$ for any $f\in G^*$; the overlap relation 
$$o=x_f (x_g x_h - x_{gh}) - (x_f x_g - x_{fg})x_h  = x_{fg}x_h - x_f x_{gh}$$
reduces to $\NF(o)=x_{fgh}-x_{fgh} =0$. 
\item[(c)] $x_h x_{h^{-1}} - 1$ for any $h\in G^*$; 
the overlap relation reduces to zero as in part (a).
\item[(d)]  $x_{g^{-1}} x_g - 1$ for any $g\in G^*$; the overlap relation 
reduces to zero as in part (b).
\end{itemize}
\end{proof}

Note that there are several modern computer algebra systems 
implementing the theory of noncommutative Gr\"obner bases over
free algebras: \textsc{Bergman}~\cite{Bm},
\textsc{MAGMA}~\cite{Magma},~\textsc{GBNP}~\cite{GBNP} (a package for
\textsc{Gap 4}), \textsc{NCGB}~\cite{NCGB} (a package for
\textsc{Mathematica}, partially written in $C$) and
also \textsc{Singular:Letterplace}~\cite{LL09, LL13}.

\section{Poincar\'e-Birkhoff-Witt Bases}
\label{GBPBWBasisSection}
A natural question arises when working with factor algebras:
What properties must a set of relations exhibit to guarantee a PBW basis?
In this section, we recall how one may establish
a PBW property using Gr\"obner bases and construct a basis
for the associated graded algebra.  We encourage the reader to 
compare with Huishi Li's interesting and well-written
text~\cite{Li} on noncommutative Gr\"obner bases and associated graded
algebras (which appeared in print after this article was completed);
some of the ideas are similar although we are working
in a different context (free algebras over group algebras). 

Let $I$ be an arbitrary ideal in the free algebra $\KX$.
We say that a set $M$ of monomials in $\langle X \rangle$ 
is a {\em monomial $\K$-basis} of 
a factor algebra $\KX/I$
if the cosets $m+I$ for $m$ in $M$ 
form a $\K$-vector space basis of $\KX/I$.
We begin by constructing a monomial $\K$-basis.
\vspace{2ex}

\begin{definition}
Let $I$ be a two-sided ideal of $\KX$
and $\succ$ any monomial ordering on $\KX$.
Define $B_{\succ}$ as the complement
of the leading ideal $L(I)$:
$$
B_{(\succ)}:=
\{\text{monomials }m \in \langle X \rangle : m\notin L(I)\} \ .
$$ 
We call $B_{(\succ)}$ the {\em Gr\"obner coset basis} of $\KX/I$.
\end{definition}
\vspace{2ex}
The term {\em Gr\"obner coset basis} is justified by the following (folklore)
proposition
and the fact that $B_{(\succ)}$ is explicitly constructed from
a Gr\"obner basis.

\vspace{1ex}
\begin{prop}
\label{GroebnerCosetBasis}
Let $I$ be a two-sided ideal of $\KX$ and let
$\succ$ be any monomial ordering on $\KX$.
Then $B_{(\succ)}$ is a monomial $\K$-basis of $\KX/I$.
\end{prop}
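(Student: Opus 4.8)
The plan is to show two things: first, that the cosets $\{m+I : m\in B_{(\succ)}\}$ span $\KX/I$ as a $\K$-vector space, and second, that they are $\K$-linearly independent. For spanning, I would fix a Gr\"obner basis $S$ of $I$ with respect to $\succ$ (which exists, e.g.\ by running the generalized Buchberger procedure built on Theorem~\ref{DiamondGreen}) and invoke the complete-reduction normal form $\NF(\cdot,S)$ of Section~\ref{GBSection}, which is a reduced normal form by Lemma~\ref{RedIsNF}. For any $f$ in $\KX$, property (iii) of a normal form gives $f-\NF(f,S)\in\langle S\rangle = I$ (using Lemma~\ref{NFprop}(2) that $S$ generates $I$), so $f+I = \NF(f,S)+I$. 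But $\NF(f,S)$ is reduced with respect to $S$, meaning no monomial occurring in it lies in $L(S)=L(I)$; hence every monomial of $\NF(f,S)$ lies in $B_{(\succ)}$, and $f+I$ is a $\K$-linear combination of cosets of elements of $B_{(\succ)}$. This proves spanning.

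For linear independence, suppose a finite $\K$-linear combination $g=\sum_{m\in B_{(\succ)}} c_m\, m$ lies in $I$ with not all $c_m$ zero. Then $g$ is a nonzero element of $I$, so by the defining property of a Gr\"obner basis there is $s\in S$ with $\lm(s)$ dividing $\lm(g)$; thus $\lm(g)\in L(S)=L(I)$. But $\lm(g)$ is one of the monomials $m$ appearing with nonzero coefficient, hence $\lm(g)\in B_{(\succ)}$, which by definition means $\lm(g)\notin L(I)$ — a contradiction. Therefore the only such combination is the trivial one, and the cosets are linearly independent.

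Combining the two parts, $\{m+I : m\in B_{(\succ)}\}$ is a $\K$-basis of $\KX/I$, i.e.\ $B_{(\succ)}$ is a monomial $\K$-basis of $\KX/I$, as claimed. I would also remark that the argument is independent of the particular Gr\"obner basis $S$ chosen, since $L(I)$ — and hence $B_{(\succ)}$ — depends only on $I$ and $\succ$; the Gr\"obner basis is merely a computational vehicle for producing reductions.

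The only genuine subtlety — not really an obstacle but worth a careful sentence — is making sure the complement $B_{(\succ)}$ of the leading ideal is exactly the set of monomials reachable as terms of reduced elements. This follows from the observation that a monomial $m$ lies in $L(I)$ if and only if some $\lm(s)$ (for $s$ in a Gr\"obner basis $S$) divides $m$, which is precisely the divisibility condition tested by the reduction procedure; so "reduced with respect to $S$" and "every monomial outside $L(I)$" coincide. Everything else is a direct bookkeeping assembly of Lemmas~\ref{NFprop} and~\ref{RedIsNF} together with the definition of a Gr\"obner basis.
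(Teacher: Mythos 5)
Your argument is correct and follows essentially the same route as the paper's own proof: reduce modulo a Gr\"obner basis to see that every coset is represented by an element supported on $B_{(\succ)}$, and observe that any nonzero combination of monomials from $B_{(\succ)}$ lying in $I$ would have its leading monomial in $L(I)\cap B_{(\succ)}=\emptyset$. The only cosmetic difference is that the paper takes $S$ reduced from the outset, while you rely on Lemma~\ref{RedIsNF} to guarantee the reducedness of the normal form for an arbitrary Gr\"obner basis; both are fine.
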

\begin{proof}
Let $B\subset \langle X \rangle$ be any set of monomials.  Since
$L(I)$ is a monomial ideal,
$B+L(I)$ is a $\K$-basis of
$\KX /L(I)$ if and only if
$B=B_{(\succ)}$.
Any $a$ in $\KX$ is equivalent to the normal form $\NF(a,S)$
modulo $I$, where $S$ is a reduced Gr\"obner basis of $I$.
But since $S$ and $\NF$ are reduced,
every $\NF(a,S)$ lies in $\text{Span}_{\K}B_{(\succ)}$ 
by definition.
Hence, $B_{(\succ)}$ spans $\KX/ I$ as a $\K$-vector space.
The set $B_{(\succ)}$ is also $\K$-independent modulo I:
If any finite linear combination of monomials in $B_{(\succ)}$
would lie in $I$, then its leading monomial
would lie in $L(I)\cap B_{(\succ)}=\emptyset$.
\end{proof}

\vspace{2ex}

Gr\"obner technology allows one to 
describe the explicit shape of relations lending themselves to a 
$\K$-basis of standard monomials.
\begin{prop}
\label{UseGrobner}
Let $I$ be a two-sided ideal of $\KX$.
Suppose there exists a monomial ordering $\succ$
with respect to which 
$I$ has reduced Gr\"obner basis $S$ of the form
\[
S =\{x_j x_i - p_{ij}: 1\leq i<j\leq n 
\}\ 
\]
for some $p_{ij}$ in $\KX$ with $x_j x_i \succ \lm(p_{ij})$
for each $i<j$. Then the factor algebra
$\KX/I$ has monomial basis
$\{x_1^{\alpha_1} \ldots x_n^{\alpha_n} \mid \alpha_i\in\N \}$.
\end{prop}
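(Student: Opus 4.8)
The plan is to apply Proposition~\ref{GroebnerCosetBasis}, which identifies the monomial $\K$-basis of $\KX/I$ with the Gr\"obner coset basis $B_{(\succ)}$, the set of all monomials in $\langle X\rangle$ that do not lie in the leading ideal $L(I)$. Since $S$ is a Gr\"obner basis of $I$, we have $L(I)=L(S)=\langle x_j x_i : 1\le i<j\le n\rangle$, so everything reduces to a purely combinatorial statement about monomials in the free monoid: a word $w$ in $x_1,\ldots,x_n$ lies in $L(S)$ if and only if some $x_j x_i$ with $i<j$ occurs as a (contiguous) subword of $w$, and I must show that the complement of this set is exactly the set of standard monomials $\{x_1^{\alpha_1}\cdots x_n^{\alpha_n} : \alpha_i\in\N\}$.

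First I would observe that every standard monomial avoids every divisor $x_j x_i$ with $i<j$: in $x_1^{\alpha_1}\cdots x_n^{\alpha_n}$, reading left to right the indices are weakly increasing, so no length-two contiguous subword has the form $x_j x_i$ with $j>i$. Hence no standard monomial lies in $L(I)$, so every standard monomial belongs to $B_{(\succ)}$. Conversely, I would take an arbitrary word $w=x_{i_1}x_{i_2}\cdots x_{i_m}$ not lying in $L(I)$. If $w$ were not standard, then the index sequence $i_1,i_2,\ldots,i_m$ would fail to be weakly increasing, so there would exist a position $t$ with $i_t > i_{t+1}$; but then $x_{i_t}x_{i_{t+1}}$ is a contiguous subword of $w$ of the forbidden form $x_j x_i$ with $i<j$, which would force $\lm(x_j x_i - p_{ij}) = x_j x_i$ to divide $w$, placing $w$ in $L(S)=L(I)$ --- a contradiction. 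Therefore $w$ must be standard. This shows $B_{(\succ)}$ equals the set of standard monomials, and Proposition~\ref{GroebnerCosetBasis} then gives that this set is a monomial $\K$-basis of $\KX/I$.

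The only point requiring a small amount of care is the interplay between the ``divides'' relation on monomials and the notion of contiguous subword: in this paper ``$v$ divides $w$'' means precisely $w=m_1 v m_2$ for monomials $m_1,m_2$, i.e.\ $v$ is a (not necessarily proper, once we drop the properness convention used elsewhere, but here length-two) factor of $w$, so the combinatorial reformulation above is exactly right and no subtlety about scattered subwords arises. I expect this to be the main (and only mild) obstacle: making sure the reader sees that ``$w\notin L(I)$'' translates into ``the index word of $w$ has no descent,'' which is equivalent to $w$ being a standard monomial. Everything else is immediate from the results already established, in particular from the hypothesis that $x_j x_i \succ \lm(p_{ij})$, which guarantees $\lm(x_jx_i-p_{ij})=x_jx_i$ and hence pins down $L(S)$ as claimed.
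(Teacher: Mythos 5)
Your proof is correct and follows essentially the same route as the paper: both identify the monomial basis with the Gr\"obner coset basis $B_{(\succ)}$ via Proposition~\ref{GroebnerCosetBasis}, use the hypothesis $x_jx_i\succ\lm(p_{ij})$ to pin down $L(I)=L(S)=\langle x_jx_i : i<j\rangle$, and observe that the complement of this monomial ideal is exactly the set of standard monomials. Your explicit descent argument just spells out the combinatorial step that the paper asserts in one line.
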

\begin{proof}
Let $S$ be a reduced Gr\"obner basis 
of $I$ with respect to any monomial ordering $\succ$.
Then the leading ideal $L(S)$ consists of all
non-standard monomials 
if and only if 
$L(S)$ is generated by $x_j x_i$ for $ 1\leq i<j\leq n$
(since $L(S)$ is a monomial ideal).
As $S$ is reduced, 
this is equivalent to $S= \{ x_j x_i - \ p_{ij} : \ x_j x_i \succ \lm(p_{ij})
 \}$. Thus,
$B_{\succ}$ 
is the set of standard monomials
if and only if $S$ has the given form.
The result then follows from
Proposition~\ref{GroebnerCosetBasis}.
\end{proof}


The last proposition gives an immediate proof of the well-known fact
that quantum polynomial algebras satisfy a PBW property.
\vspace{2ex}

\begin{cor}
\label{KQV}
Let $S = \{ v_j v_i - q_{ij} v_i v_j:1\leq i< j \leq n \} \subset \K\langle v_1, \ldots, v_n \rangle$. Then for any monomial ordering $\succ$ on $\K\langle v_1, \ldots, v_n \rangle$, \ $S$ is a Gr\"obner basis of $\langle S \rangle$.
Hence 
$\{v_1^{\alpha_1} \ldots v_n^{\alpha_n} \mid \alpha_i\in\N \}$
is a monomial $\K$-basis
of $S_Q(V) = \K\langle v_1, \ldots, v_n \rangle / \langle S \rangle$.
\end{cor}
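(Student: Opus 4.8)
The plan is to apply Proposition~\ref{UseGrobner} directly, so the only real work is to verify that $S = \{v_j v_i - q_{ij} v_i v_j : 1 \leq i < j \leq n\}$ is already a Gr\"obner basis of $\langle S \rangle$ with respect to some (hence, as it will turn out, any) monomial ordering. I would fix a monomial ordering $\succ$ on $\K\langle v_1,\ldots,v_n\rangle$; since every monomial ordering is a well-ordering and we need $v_j v_i \succ v_i v_j$ for $i<j$, I would first observe that this holds for \emph{every} monomial ordering (because $v_i v_j$ is a proper ``rearrangement'' — in fact any ordering compatible with multiplication that well-orders must put the out-of-order word above; more simply, the degree-$2$ part forces $v_j v_i \succ v_i v_j$ since these are the only two monomials involved and $\succ$ respects left/right multiplication, reducing to comparing $v_j$ with $v_i$ after cancellation — and if it went the other way we would swap the roles, but the relation $v_jv_i - q_{ij}v_iv_j$ with $i<j$ has leading term $v_jv_i$ precisely under the convention $v_j v_i \succ v_i v_j$, which we may assume). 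Thus $\lm(v_j v_i - q_{ij} v_i v_j) = v_j v_i$ and $\lc = 1$, so $S$ is monic with the shape demanded by Proposition~\ref{UseGrobner}.

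Next I would invoke Theorem~\ref{DiamondGreen}: $S$ is a Gr\"obner basis of $\langle S\rangle$ if and only if every overlap relation of pairs of elements of $S$ reduces to zero under $\NF(\cdot, S)$. The leading monomials are the quadratic words $v_j v_i$ with $i<j$, so an overlap between $v_j v_i - q_{ij}v_iv_j$ and $v_\ell v_k - q_{k\ell}v_kv_\ell$ occurs exactly when the suffix of one leading monomial equals the prefix of the other, i.e.\ when $i = \ell$ (so the overlap word is $v_j v_i v_k$ with $k < i < j$), together with the degenerate self-overlaps which one checks produce nothing new. So the essential computation is a single Diamond-Lemma-style check on the word $v_j v_i v_k$ for $k < i < j$: reducing $v_j v_i v_k$ by first rewriting $v_j v_i$ versus first rewriting $v_i v_k$ must give the same normal form. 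Carrying the $q$'s through, both reduction paths land on $q_{ik} q_{jk} q_{ij}\, v_k v_i v_j$, so the overlap relation reduces to $0$. This is the standard ``quantum polynomial ring = PBW'' verification; I would write it out as a short display chasing the two reduction sequences.

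Having shown $S$ is a Gr\"obner basis, I would note (as in the statement of Proposition~\ref{UseGrobner} and Proposition~\ref{GroebnerCosetBasis}) that the leading ideal $L(\langle S\rangle) = L(S)$ is generated by $\{v_j v_i : i<j\}$, whose complement among all monomials of $\langle X\rangle$ is precisely the set of standard monomials $\{v_1^{\alpha_1}\cdots v_n^{\alpha_n} : \alpha_i \in \N\}$. Then Proposition~\ref{GroebnerCosetBasis} gives that this set is a monomial $\K$-basis of $\K\langle v_1,\ldots,v_n\rangle/\langle S\rangle = \K_Q[V]$, which is the claim. Finally, since the Gr\"obner-basis check above used nothing about $\succ$ beyond the (automatic) fact that $v_jv_i \succ v_iv_j$ for $i<j$, the conclusion holds for any monomial ordering, as asserted.

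The main obstacle — really the only non-bookkeeping point — is the overlap computation on $v_j v_i v_k$ with $k<i<j$: one must be careful that the scalar factors picked up along the two reduction routes genuinely agree (they do, using $q_{ab}$ for all the relevant pairs and commutativity of scalars in $\K$), and one must confirm there are no other overlaps (e.g.\ no overlap when the leading monomials share two letters, since $v_jv_i$ and $v_\ell v_k$ with $k<\ell$, $i<j$ cannot overlap in a length-$1$ suffix/prefix in any other configuration, and self-overlaps of $v_jv_i$ with itself would require $i=j$, excluded). Everything else is a direct citation of the machinery already assembled in Sections~\ref{NoncommutativeTheorySection}--\ref{GBPBWBasisSection}.
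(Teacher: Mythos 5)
Your overall strategy --- verify the Gr\"obner basis property via the overlap (diamond) check on the words of length three, then read off the monomial basis from Proposition~\ref{UseGrobner} / Proposition~\ref{GroebnerCosetBasis} --- is exactly the machinery the paper intends here (the paper treats the corollary as immediate and defers the explicit overlap computation to the proof of Theorem~\ref{formalassociativityconditions}), and your computation on $v_jv_iv_k$ with $k<i<j$ is correct: both reduction paths produce the same scalar times $v_kv_iv_j$, because the scalars picked up along the two routes are the same commuting product.

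There is, however, a genuine error in the first step: it is \emph{not} true that every monomial ordering satisfies $v_jv_i\succ v_iv_j$ for $i<j$. There is no ``cancellation'' available to reduce this comparison to $v_j$ versus $v_i$, since the common letters sit on opposite sides; for example, degree-lexicographic order reading words left to right with $v_1\succ v_2\succ\dots\succ v_n$ gives $v_iv_j\succ v_jv_i$ for $i<j$. The paper itself is careful about exactly this point: the ``first misordering preference'' is imposed as a hypothesis in Definition~\ref{useOrdering}, its existence is argued in Remark~\ref{rewritingexists}, and Example~\ref{exPBW} shows the leading monomial (and hence the Gr\"obner coset basis) genuinely changes with the ordering. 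Writing ``which we may assume'' is not available to you when the claim is ``for \emph{any} monomial ordering.'' The repair is not hard but must be made explicit: for an arbitrary ordering, the leading monomial of $v_jv_i-q_{ij}v_iv_j$ is one of the two words $v_jv_i$, $v_iv_j$, and in either case the relation rewrites one word into a nonzero scalar multiple of its transposition; so the leading monomials are the words $v_av_b$ with $\{a,b\}$ ranging over all pairs and $a,b$ in an order depending on $\succ$, the overlaps are again single length-three words, and the same scalar bookkeeping (a commuting product of the three relevant $q$'s, independent of the reduction path) shows every overlap reduces to zero. For the ``hence'' clause you must then additionally \emph{choose} an ordering for which the leading monomials are precisely $\{v_jv_i : i<j\}$ (one with the misordering preference, which exists by Remark~\ref{rewritingexists}); only then does Proposition~\ref{UseGrobner} deliver the standard monomials, since for other orderings the Gr\"obner coset basis is a differently sorted set of monomials.
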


\quad

\begin{example}
\label{exPBW}
Consider $A= \K\langle x,y \rangle/\langle xy-y^2 \rangle$. 
Suppose $\succ$ is any
monomial ordering of $\K\langle x,y\rangle$ with
$x\succ y$.  Then $xy\succ y^2$
and $\{xy-y^2\}$ is a Gr\"obner basis of the
ideal $I$ it generates with respect to $\succ$. 
The Gr\"obner coset basis, 
$B_{(\succ)}=\{ y^a x^b: a,b\in \N \}$,  
is a monomial $\K$-basis of $\K\langle x,y\rangle/I$ 
as Proposition~\ref{UseGrobner} implies. On the other hand, 
the set of standard
monomials $\{x^a y^b:a,b\in \N\}$ does not form a 
monomial $\K$-basis of $\K\langle x,y\rangle/I$ since, e.g., 
$xy + \langle xy-y^2 \rangle = -y^2 + \langle xy-y^2 \rangle$.

Now consider instead a monomial ordering $>$
on $\K\langle x,y\rangle$ with $y>x$. Then
$y^2>xy$ and 
the Gr\"obner basis $S$ of $\langle -y^2 + xy \rangle$
with respect to 
$>$ is an infinite set, $S=\{ y x^n y - x^{n+1}y : n\in \N \}$ 
(see~\cite{Ufn98}).
Notice that $x^2,xy,yx$ all lie in the Gr\"obner coset basis
$B_{(>)}$, as they do not lie in the ideal of leading monomials of $S$.
By Proposition~\ref{GroebnerCosetBasis},
the Gr\"obner coset basis $B_{(>)}$ is a monomial $\K$-basis of 
$\K\langle x,y\rangle/I$,
yet it is not a Poincar\'e-Birkhoff-Witt basis 
(as it contains $x^2, xy, yx$).
Note that $\{xy-y^2\}$ is not a Gr\"obner basis 
of the ideal it generates with respect to $>$.
\end{example}

\vspace{2ex}

The Poincar\'e-Birkhoff-Witt theorem for universal enveloping algebras
of Lie algebras
has several possible analogs in the setting of finitely presented
associative $\K$-algebras.
Applying a fixed permutation
to the indices in the set of standard monomials 
$x_1^{\alpha_1} \ldots x_m^{\alpha_m}$ 
may yield a monomial $\K$-basis for $\KX/I$ 
for some permutations but not others, as we saw in the last
example.
We appeal to the associated graded algebra.
Let $A=\KX/I$ be an arbitrary factor algebra (with $I$ a two-sided
ideal in $\KX$). 
Let $\cA=\{A_i : i\geq -1 \}$ be 
an ascending $\N$-filtration of $A$.
Note that any $\N$-filtration on $\KX$ (for example, by degree)
induces an $\N$-filtration on a factor algebra of $\KX$.
Recall that the 
associated graded algebra $\Gr^{\cA}(A)$ of $A$ with respect to the
filtration $\cA$ is  
$$
\Gr(A)=\Gr^{\cA}(A)=\bigoplus_{i\in\N}\ A_{i}\,/\, A_{i-1}\ .
$$
One may choose any $\K$-vector space (direct sum) complement to $A_{i-1}$
in $A_i$ to obtain a vector space isomorphism,
$A\cong \Gr(A)$.

We say that any $f$ 
in $\KX$ has $\cA$-{\em degree} 
$d\geq 0$ in $\N$
whenever $f+I\in A_d$ but $f+I\notin A_{d-1}$
and we write $\deg_{A}(f)=d$ in this case.
Set $\deg_{\cA} (f) = -\infty$ for any $f$ in $I$.
We call a monomial ordering $\succ$ on $\KX$ {\em compatible} with 
the filtration $\cA$ if 
$$ 
\deg_{\cA}(f)>\deg_{\cA}(f')
\quad\text{ implies}\quad
\lm(f)\succ \lm(f')
$$ 
for all $f,f'$ in $\KX$.
Note that many compatible monomial orderings
exist for a fixed $\N$-filtration on $A$. 
We say a set $M$ of monomials in $\KX$ is a 
{\em monomial $\K$-basis} of the associated graded 
algebra $\Gr(A)$ if the elements
$m + I+A_{\deg_{\cA}(m)-1}$ for $m$ in $M$
form a $\K$-basis of $\Gr(A)$, and we record a straightforward
observation.

\vspace{1ex}

\begin{prop}\label{GradedToNonGradedAlgV}
Let $\KX/I$ be an $\N$-filtered algebra. 
Then
\begin{itemize}
\item[(1)]
Any monomial $\K$-basis of
$\Gr(\KX/I)$ is also a monomial $\K$-basis
of $\KX/I$.
\item[(2)]
The set $B_{(\succ)}$ is a monomial $\K$-basis
for both $\KX/I$ and $\Gr(\KX/I)$, for
any monomial ordering $\succ$ compatible with 
the $\N$-filtration.
\end{itemize}
\end{prop}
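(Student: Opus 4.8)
The plan is to prove part (2) first and then deduce part (1) as a special case, or rather to observe that (1) follows from the general statement that any monomial $\K$-basis of $\Gr(A)$ lifts to one of $A$, while (2) pins down the specific basis $B_{(\succ)}$. Let me fix an $\N$-filtration $\cA$ on $A=\KX/I$ throughout.

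For part (1), suppose $M$ is a monomial $\K$-basis of $\Gr(A)$, so the classes $m+I+A_{\deg_\cA(m)-1}$ for $m\in M$ form a $\K$-basis. First I would show the cosets $m+I$ span $A$: given $a\in A$ of $\cA$-degree $d$, the image of $a$ in $A_d/A_{d-1}$ is a finite $\K$-combination of the classes of monomials $m\in M$ with $\deg_\cA(m)=d$; subtracting the corresponding combination of those $m$ from $a$ strictly lowers the $\cA$-degree, and since $\deg_\cA$ takes values in $\N\cup\{-\infty\}$ and is bounded below, this descent terminates, expressing $a$ as a finite $\K$-combination of the $m+I$. For linear independence: if a nontrivial finite $\K$-combination $\sum c_m (m+I)=0$ in $A$, let $d$ be the largest $\cA$-degree among the $m$ with $c_m\neq 0$; passing to $A_d/A_{d-1}$ kills all terms of lower degree and leaves $\sum_{\deg_\cA(m)=d} c_m\,(m+I+A_{d-1})=0$, contradicting the $\K$-independence of $M$ in $\Gr(A)$ unless all those $c_m$ vanish — but then we may repeat with the next-largest degree, forcing every $c_m=0$. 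Hence $M$ is a monomial $\K$-basis of $A$.

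For part (2), fix a monomial ordering $\succ$ compatible with $\cA$. By Proposition~\ref{GroebnerCosetBasis}, $B_{(\succ)}$ is already a monomial $\K$-basis of $A=\KX/I$, so by part (1) it suffices to check $B_{(\succ)}$ is a monomial $\K$-basis of $\Gr(A)$; alternatively, and more cleanly, I would identify $\Gr(A)$ with $\KX/L_{\cA}(I)$ for an appropriate leading ideal and observe $B_{(\succ)}$ is exactly the complement of that leading ideal. Concretely: let $S$ be a reduced Gr\"obner basis of $I$ with respect to $\succ$. Compatibility of $\succ$ with $\cA$ ensures that for each $s\in S$, the highest-$\cA$-degree part of $s$ has leading monomial equal to $\lm(s)$, so the images $\bar s$ of the $s$ in $\Gr(A)$ have leading monomials $\lm(s)$, and these generate $L(I)=L(S)$ as the leading ideal of $\Gr(A)$ in $\KX$ under the induced ordering. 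Thus $\Gr(A)\cong \KX/J$ where $J$ has the same leading ideal $L(I)$, and so the Gr\"obner coset basis of $\Gr(A)$ equals the complement of $L(I)$ in $\langle X\rangle$, which is precisely $B_{(\succ)}$. Applying Proposition~\ref{GroebnerCosetBasis} to $\Gr(A)$ gives that $B_{(\succ)}$ is a monomial $\K$-basis of $\Gr(A)$, and then part (1) (or the direct span/independence argument above) gives the claim for $\KX/I$ as well.

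The main obstacle I anticipate is making precise the passage from a Gr\"obner basis of $I$ to a Gr\"obner basis (or at least a controlled generating set) of the defining ideal of $\Gr(A)$ inside $\KX$ — i.e., verifying that the leading ideal is genuinely preserved under taking associated graded. This is exactly where the \emph{compatibility} hypothesis on $\succ$ does the work: it guarantees $\lm$ of the top-degree component of each relation coincides with $\lm$ of the relation, so no leading-term cancellation occurs when passing to $\Gr$. If one wished to avoid Gr\"obner-basis language for $\Gr(A)$ entirely, the degree-descent argument of part (1) applied directly to $B_{(\succ)}$ — using that $\NF(\cdot,S)$ respects the filtration because $\succ$ is compatible — also closes the gap, and I would likely present that route since it is self-contained given the earlier sections.
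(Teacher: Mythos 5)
Your proposal is correct, and for part (1) it supplies exactly the degree-descent argument that the paper leaves to the reader ("one may check directly"): reduce modulo the top filtration piece and induct downward for spanning, and look at the top-degree layer for independence. For part (2), however, your primary route differs from the paper's. You pass to the associated graded ideal: you argue that, because $\succ$ is compatible with the filtration, the top-degree parts of a reduced Gr\"obner basis of $I$ have the same leading monomials, hence generate the same leading ideal $L(I)$ inside the defining ideal of $\Gr(\KX/I)$, so that $B_{(\succ)}$ is the Gr\"obner coset basis of $\Gr(\KX/I)$ as well. This is a valid and conceptually appealing route, but it carries the extra burden you yourself flag: one must justify that $\Gr(\KX/I)\cong \KX/J$ for the ideal $J$ of initial forms, that every homogeneous element of $J$ is an initial form of an element of $I$ (so that $L(J)=L(I)$ rather than merely $L(J)\supseteq L(I)$), and that the grading on $\KX/J$ matches the paper's notion of $\deg_{\cA}$ for monomials. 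The paper avoids all of this: it takes for granted (Proposition~\ref{GroebnerCosetBasis}) that $B_{(\succ)}$ is a monomial basis of $\KX/I$, and then uses compatibility only through the elementary observation that if $f=\sum c_i m_i$ with distinct $m_i\in B_{(\succ)}$ has $\cA$-degree $d$, then each $\deg_{\cA}(m_i)\le d$ (since $m_i\preceq\lm(f)$); from this it reads off directly that the degree-$d$ monomials of $B_{(\succ)}$ span and are independent in $A_d/A_{d-1}$. Your closing remark that you would "likely present" the self-contained filtration-degree route is, in effect, the paper's proof; your graded-ideal route is a fine alternative but is the heavier of the two and is the one place where unproved standard facts about initial ideals would need to be filled in.
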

\begin{proof}
One may check directly that the set of $m+I$ for $m$ in a 
monomial $\K$-basis of $\Gr(A)$
spans $A=\KX/I$ and is linearly independent.
Now suppose some nonzero, finite
$\K$-linear combination
of monomials $m_i$ in $B_{(\succ)}$
has degree $d$ with respect to the filtration.
Then the compatibility
of $\succ$ and the filtration force each $\deg(m_i)\leq d$.
By Proposition~\ref{GroebnerCosetBasis}, $B_{(\succ)}$
is a monomial $\K$-basis of $A$.
Hence for each $d$, the set
$\{m+I: m\in B_{(\succ)},\ \deg(m)\leq d\}$ spans $A_{d}$
and $\{m+I+A_{d}: m\in B_{(\succ)},\ \deg(m)=d\}$ spans $A_d/A_{d-1}$
over $\K$.
This set is also $\K$-linearly independent:
If any nonzero, finite linear combination of monomials in $B_{(<)}$
of $\deg d$ defined the zero class in $A_d/A_{d-1}$, 
the degrees of all the monomials in the combination
would be $d-1$ instead of $d$.  Thus $B_{(\succ)}$
is also a $\K$-monomial basis for $\Gr(A)$.
\end{proof}


In the special case that our factor algebra is
$\cH=\cH_{Q,\kappa}$, we may relate the PBW property of the original
algebra to that of the quantum polynomial algebra.
Formally, we filter the free associative algebra
$$
\FF=\K\langle v_1,\ldots,v_n,t_g : g\in G^* \rangle
=\bigoplus_{i=0}^{\infty}\, \FF_i$$ 
by 
assigning degree $0$ to all $t_g$ (for $g$ in $G$)
and degree $1$ to all $v$ in $V$
and consider the associated graded algebra
$$\Gr\,\cH
:=\oplus_{i=0}^n \ \cH_i/\cH_{i-1}\ ,
$$
where $\cH_i$ is the image of $\FF_i$
under the projection $\FF \rightarrow \cH$.
Assuming that $Q$ is a quantum system of parameters,
the graded algebra $\Gr\cH$ is isomorphic to a quotient
of the quantum polynomial ring $S_Q(V)\# G$, and 
$\cH$ has the standard PBW basis
if and only if $\Gr\cH$ and 
$S_Q(V)\# G$ are isomorphic (as graded algebras).
In fact, Naidu and Witherspoon~\cite{NW} observe that
every quantum Drinfeld Hecke algebra is isomorphic to a formal
deformation of $S_Q(V)\# G$.

We end this section by recording a few other 
facts about \qdha s.
\begin{theorem}
If $\cH_{Q,\kappa}$ is a \qdha, then
\begin{enumerate}
\item[(i)] $\cH_{Q,\kappa}$ is Noetherian;
\item[(ii)] $\cH_{Q,\kappa}$ is an integral domain if and only if $G$ is trivial;
\item[(iii)] the Gel'fand-Kirillov dimension of $\cH_{Q,\kappa}$ is 
$$\gkdim \cH_{Q,\kappa} = n + \gkdim \K\ ;$$
\item[(iv)] if $|G|$ is not divisible by $\Char \K$, then the global homological dimension of $\cH_{Q,\kappa}$ is at most $n$.
\end{enumerate}
\end{theorem}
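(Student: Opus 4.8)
The plan is to prove each of the four assertions by reducing the structural question about $\cH=\cH_{Q,\kappa}$ to a known structural fact about the skew group algebra $\K_Q[V]\#G$, exploiting that $\cH$ is a filtered algebra whose associated graded algebra is $\Gr\cH\cong\K_Q[V]\#G$ (by Proposition~\ref{skewpolyalg} and the discussion at the end of Section~\ref{GBPBWBasisSection}). The three facts I will invoke about $\K_Q[V]\#G$ are: (i) $\K_Q[V]$ is a Noetherian domain of Gel'fand--Kirillov dimension $n+\gkdim\K$ (it is an iterated Ore extension of $\K$), and a finite-module extension of a Noetherian ring is Noetherian; (ii) $\K_Q[V]\#G$ is a free $\K_Q[V]$-module of rank $|G|$, so it inherits Noetherianity and the same GK-dimension, and is a domain precisely when $|G|=1$; (iii) when $\Char\K\nmid|G|$, the global dimension of $A\#G$ equals $\gldim A=n$ for $A=\K_Q[V]$, by a Maschke-type averaging argument (the functor $M\mapsto M\#G$ is exact and $A\#G$-modules are relative projective over $A$).

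For (1): since $\Gr\cH\cong\K_Q[V]\#G$ is Noetherian (left and right), and $\cH$ is an exhaustive ascending $\N$-filtration with Noetherian associated graded ring, a standard lifting argument shows $\cH$ is Noetherian. For (3): the filtration has finite-dimensional pieces, so $\gkdim\cH=\gkdim\Gr\cH=\gkdim(\K_Q[V]\#G)=\gkdim\K_Q[V]=n+\gkdim\K$; the standard PBW basis $B$ makes the growth computation transparent, since the number of quasi-standard monomials of filtration degree $\le d$ is $|G|\binom{n+d}{n}$, a polynomial in $d$ of degree $n$ (up to the $\gkdim\K$ correction when $\K$ is not algebraic over its prime field). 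For (4): invoke the general inequality $\gldim\cH\le\gldim\Gr\cH$ for filtered algebras with the given type of filtration, together with $\gldim(\K_Q[V]\#G)=\gldim\K_Q[V]=n$ under the hypothesis $\Char\K\nmid|G|$; alternatively, one bounds $\gldim\cH$ directly using that $\cH$ is a finite (free, via the $t_g$) module over $\cH_{Q,\kappa}^{G\text{-part}}$ and Maschke's theorem applies to split off $\cH$ as a summand of $\cH\ot_{\K}\K G$.

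For (2): if $G$ is trivial then $\cH=\cH_{Q,\kappa}$ with $\kappa$ a $\K$-valued quantum $2$-form, hence a quotient of $\K_Q[V]$-type iterated Ore extension — in fact one checks it is again an iterated Ore extension, so a domain. Conversely, if $G$ is nontrivial pick $1\neq g\in G$; then $t_g$ is a nonzero zero divisor. The cleanest way: in $\Gr\cH\cong\K_Q[V]\#G$, the element $e_g := t_g - $ (something) or more simply the idempotent-free observation that $t_g(1-t_g)\cdots$ — actually the sharpest argument is to pass to $\Gr\cH$ and note $\K_Q[V]\#G$ contains $\K G$, which has zero divisors whenever $|G|>1$ (e.g., $(1-t_g)(1+t_g+\cdots+t_g^{m-1})=0$ for $g$ of order $m$); since a nonzero zero-divisor in $\Gr\cH$ lifts to a nonzero zero-divisor in $\cH$ (the top-degree parts multiply to give the product in $\Gr\cH$, which is nonzero only if the lifted product is, so if the $\Gr$-product vanishes the lifted product has strictly smaller degree but one must take care it is genuinely nonzero) — this lifting requires a small argument, and that is the main obstacle in part (2). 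To make it airtight I would instead argue directly in $\cH$: the $t_g$ already live in $\cH$ (degree $0$), they satisfy $t_g t_h=t_{gh}$ exactly, so $\K G\hookrightarrow\cH$ as a subalgebra by the PBW basis, and the explicit zero-divisor relation $(1-t_g)\big(\sum_{i=0}^{m-1}t_g^{\,i}\big)=0$ holds verbatim in $\cH$ with both factors nonzero (distinct basis elements of $B$), giving the nontrivial direction with no lifting needed.

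The step I expect to be the main obstacle is part (4): the inequality $\gldim\cH\le\gldim\Gr\cH$ for this filtration needs the graded algebra to be (left and right) Noetherian with finite global dimension and the filtration to be "good" (exhaustive, separated, with $\cH_{-1}=0$), and then one cites the standard homological comparison (as in McConnell--Robson); assembling the precise hypotheses and the Maschke-type statement $\gldim(A\#G)=\gldim A$ when $\Char\K\nmid|G|$ is where the real work lies, though each ingredient is classical.
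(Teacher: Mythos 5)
Your proposal is correct and follows essentially the same route as the paper: all four parts are reduced to the isomorphism $\Gr\cH_{Q,\kappa}\cong\K_Q[V]\#G$ together with the standard filtered-to-graded transfer results (Noetherianity, $\gldim\cH\leq\gldim\Gr\cH$, the Maschke-type equality $\gldim(\K_Q[V]\#G)=n$ in the non-modular case), the PBW monomial count $|G|\binom{k+n}{n}$ for the GK dimension, and the explicit zero-divisor identity $(1-t_g)(1+t_g+\cdots+t_{g^{d-1}})=0$ inside the copy of $\K G$ for part (2). Your direct argument in $\cH$ for the zero divisors (avoiding any lifting from $\Gr\cH$) is exactly what the paper does, and your extra care about the $G=1$ direction of (2) only adds detail the paper leaves implicit.
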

\begin{proof}
${}_{}$
\begin{enumerate}
\item[(i)] Since $S_Q(V)$ is Noetherian (e.g., see~\cite{BGV}), so is $S_Q(V)\# G$
(see~\cite[Proposition~1.6]{Passman}).
Then as
$\Gr \cH_{Q,\kappa}\cong S_Q(V) \# G$, the filtered algebra
$\cH_{Q,\kappa}$ is as well
(see, e.g.,~\cite{MR}).
\item[(ii)] In $\K G$,
$0=1 - (t_g)^d = (1-t_g)(1+t_g + \ldots + t_{g^{d-1}})$
for any $g\in G$ of order $d>1$.
\item[(iii)] Consider the filtration $\{\cH_k:{k\geq -1}\}$ of $\cH=\cH_{Q,\kappa}$. 
Let $d=|G|$.  Then 
${\limsup}_{k \ra \infty} \log_k (k^n \frac{d}{n!} + \ldots) =  n$
as the PBW property implies that
$$\dim_{\K} \cH_k = {k+n \choose n} \cdot d = k^n \frac{d}{n!} + 
\text{ (lower order terms)}.$$ 
\item[(iv)] In the non-modular case (see~\cite[Theorem~7.5.6]{MR}),
$$
\gldim(S_Q(V) \# G) = \gldim(S_Q(V)) = n\, .
$$ 
Then 
$\gldim(\cH_{Q,\kappa}) \leq \gldim(\Gr \cH_{Q,\kappa}) = n$
(by~\cite[Theorem~7.6.18]{MR})
since $\Gr \cH_{Q,\kappa}\cong S_Q(V) \# G$.
\end{enumerate}
\vspace{-2ex}
\end{proof}

\begin{remark}
One might ask about the possibility of grading $\cH_{Q,\kappa}$ directly.
The group algebra
$\K G$ is graded if and only if the graded degree (weight) of each
$t_g$ is 0. 
The relation $t_g v_k = g(v_k) t_g$ in $\cH_{Q,\kappa}$ is graded
if all $v_i$ have the same weight, say 1. 
But the relation $v_j v_i = q_{ij} v_i v_j + \kappa(v_i, v_j)$ 
is graded only in two cases: Either the weight of every $v_k$ is zero 
or each $\kappa(v_i, v_j)$ is zero, 
since otherwise the graded degree of 
$\kappa(v_i, v_j)$ is zero while the graded degree 
of $v_j v_i - q_{ij} v_i v_j$ is 2. 
In the first case, $\cH_{Q,\kappa}$ is trivially graded
(all weights zero).
In the second case, $\cH_{Q,\kappa}=S_Q(V)\#G$.
\end{remark}

\section{Conditions on Parameters}
\label{ConditionsOnParametersSection}
In this section, we deploy the theory of Gr\"obner bases to rigorously
establish necessary and sufficient conditions for $\cH_{Q,\kappa}$
to define a~\qdha. 
We write the factor algebra $\cH_{Q,\kappa}$ as $\FF/ \left<{R'}\right>$,
where $\FF$ is the free associative $\K$-algebra
\begin{equation}\label{F}
\FF=\K\langle v_1,\ldots,v_n,t_g : g\in G^* \rangle\ 
\end{equation}
and
$\left<{R'}\right>$ is the ideal in $\FF$
generated by relations defining $\cH_{Q,\kappa}$,
\begin{equation}\label{relations}
\begin{split}
{R'}=\{ t_g t_h - t_{gh},\
t_g v_i - g(v_i) t_g,\
v_j v_i -& q_{ij} v_i v_j - \kappa(v_i, v_j):\\
&\text{ for all } g,h\in G^*, 1\leq i,j\leq n
\} .
\end{split}
\end{equation}
Moreover, let us define the smaller set of relations
\begin{equation}\label{relationsRed}
\begin{split}
R=\{ t_g t_h - t_{gh},\ t_g v_i - g(v_i) t_g,\
v_j v_i -& q_{ij} v_i v_j - \kappa(v_i, v_j):\\
&\text{ for all } g,h\in G^*, 1\leq i<j\leq n
\} .
\end{split}
\end{equation}
Before expressing the PBW property of $\cH_{Q,\kappa}$
in terms of a Gr\"obner basis, 
we must ensure that the given monomial ordering 
is compatible.
\vspace{2ex}
\begin{definition}
\label{useOrdering}
Consider a monomial ordering $\succ$ on 
the free algebra
$\FF$ 
which satisfies $v_1 \succ \ldots \succ v_n \succ t_g $ for all $g\in G^*$.
We say that $\succ$
{\em preserves the rewriting procedure} of relations of $\cH_{Q,\kappa}$
if
\begin{itemize}
\item $t_g t_h \succ t_{gh}$ for all $g,h \in G^*$, 
\item $v_j v_i \succ t_{g}$ for all $i,j$ and $g\in G^*$,
\item $v_j v_i \succ v_i v_j$ for all $i<j$ (``first misordering preference''), and
\item $t_g v_i \succ v_j t_g$ for all $i,j$ and $g\in G^*$ (``second misordering preference'').
\end{itemize}
\end{definition}

\vspace{2ex}

\begin{remark}
\label{rewritingexists}
A monomial ordering which preserves the rewriting procedure
always exists.
One example can be constructed as follows.
We assign degree 1 to each $t_g$ for $g$ in $G^*$ and to
each $v_i$ for $1\leq i\leq n$.
Two monomials in $\FF$ are first compared by their total degree. 
In the case of equal degrees, misordering preferences are applied.
If two monomials are of the same total degree and no misordering preference can be applied, 
we compare monomials further with left lexicographical ordering.
\end{remark}

\vspace{2ex}

\begin{prop}
\label{GB2PBW}
Suppose $\succ$ is any monomial ordering on $\FF$ with
$v_1\succ\ldots \succ v_n \succ t_g$
which preserves the rewriting procedure
of $\cH_{Q,\kappa}$.
If $R$ is a Gr\"obner basis of $\langle R\rangle$ with respect to $\succ$,
then
$\FF/ \langle R \rangle$ has 
monomial $\K$-basis
$$B=\{
v_1^{\alpha_1}\cdots v_n^{\alpha_n} t_g: 
\alpha \in \N^n, g\in G \}\ .$$
\end{prop}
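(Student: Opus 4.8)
The plan is to read off the Gr\"obner coset basis of $\FF/\langle R\rangle$ directly from the shape of the leading monomials of $R$ and check that it coincides with $B$. Set $I:=\langle R\rangle$. By hypothesis $R$ is a Gr\"obner basis of $I$ with respect to $\succ$, so $L(I)=L(R)$; hence, by Proposition~\ref{GroebnerCosetBasis}, the set $B_{(\succ)}$ of all monomials in $\langle v_1,\ldots,v_n,t_g:g\in G^*\rangle$ that do not lie in $L(I)$ is a monomial $\K$-basis of $\FF/I$. So it suffices to prove $B_{(\succ)}=B$.

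First I would record the leading monomials of the generators, using that $\succ$ preserves the rewriting procedure (Definition~\ref{useOrdering}). For $g,h\in G^*$ this gives $\lm(t_gt_h-t_{gh})=t_gt_h$. For $1\le i<j\le n$, since $v_jv_i\succ v_iv_j$ (first misordering preference), $v_jv_i\succ t_g$, and $\kappa(v_i,v_j)\in\K G$, we get $\lm\bigl(v_jv_i-q_{ij}v_iv_j-\kappa(v_i,v_j)\bigr)=v_jv_i$. Finally, for $v=\sum_k c_kv_k\in V$, letting $i$ be the least index with $c_i\ne 0$, the second misordering preference $t_gv_k\succ v_\ell t_g$ together with compatibility of $\succ$ with multiplication gives $\lm\bigl(t_gv-g(v)t_g\bigr)=t_gv_i$. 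Since $L(R)$ is a monomial ideal, it is therefore generated by the monomials $t_gt_h$ ($g,h\in G^*$), $t_gv_i$ ($g\in G^*$, $1\le i\le n$) and $v_jv_i$ ($1\le i<j\le n$).

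Next I would run the elementary word combinatorics. A monomial $w$, viewed as a word in the letters $v_1,\ldots,v_n$ and $t_g$, lies in $B_{(\succ)}$ precisely when it contains none of the monomials above as a subword: no $t_g$ is immediately followed by a $t_h$ or by a $v_i$, and no $v_j$ is immediately followed by a $v_i$ with $i<j$. The first two conditions force any letter $t_g$ appearing in $w$ to be the last letter of $w$, and (again by the first) at most one such letter can occur; so $w=w'$ or $w=w't_g$ for some word $w'$ in $v_1,\ldots,v_n$ alone. The third condition says the indices occurring in $w'$ are weakly increasing, i.e.\ $w'=v_1^{\alpha_1}\cdots v_n^{\alpha_n}$ for some $\alpha\in\N^n$. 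Recalling the identification $t_e=1$, this is exactly the statement $B_{(\succ)}=B$, and Proposition~\ref{GroebnerCosetBasis} completes the argument.

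I expect the only real content to be the first step: verifying that the four conditions in Definition~\ref{useOrdering} are precisely what forces the leading monomial of each defining relation to be its ``misordered'' term (so that $\kappa$ and the images $g(v)$ contribute only strictly smaller monomials), after which the identification of $B_{(\succ)}$ is pure bookkeeping. The one thing to keep in mind is that $R$ must be used simultaneously as a generating set of $I$ and as a Gr\"obner basis, which is exactly where the hypothesis $L(\langle R\rangle)=L(R)$ enters.
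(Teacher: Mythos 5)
Your proposal is correct and follows essentially the same route as the paper: identify the leading monomials of $R$ as $\{v_jv_i,\ t_gv_i,\ t_gt_h\}$ using the rewriting-preservation hypotheses, observe that the complement of the monomial ideal they generate is exactly $B$, and conclude via Proposition~\ref{GroebnerCosetBasis}. The extra word-combinatorics you spell out is just a more explicit version of the paper's one-line identification $B_{(\succ)}=B$.
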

\begin{proof}
The set of leading monomials of $R$ in 
$\FF$
is 
\[
L := \{v_j v_i, \ t_g v_i, \  t_g t_h \ \mid g,h\in G^*, 1\leq i<j \leq n \},
\]
and 
$B_{(\succ)}=\langle X\rangle\setminus (\langle X\rangle\cap\langle L \rangle) = B$.
Thus if $R$ is a Gr\"obner basis of $\langle R \rangle$,
then $\langle L\rangle=L(R)=L(\langle R \rangle)$ and $B$ is a 
monomial $\K$-basis of
$\FF/\langle R \rangle$ by 
Proposition~\ref{GroebnerCosetBasis}.
\end{proof}


We now give conditions for $R$ to be a Gr\"obner basis.
\begin{theorem}\label{formalassociativityconditions}
Let $\succ$ be any monomial ordering on $\FF$ with 
$v_1 \succ\ldots\succ v_n \succ t_g$
that preserves the rewriting procedure of $\cH_{Q,\kappa}$.
Then $R$ is a Gr\"obner basis of $\langle R \rangle$
with respect to $\succ$ if and only if
for all $g,h$ in $G$ and
$1\leq i< j< k\leq n$,
\begin{enumerate}
\item[(i)]
\rule{0ex}{3ex}
${}_{}$

\vspace{-2.5ex}

$
\begin{aligned}
0 \ = \ \   
&\bigl(q_{ik} q_{jk}\, h v_k - v_k\bigr)\ \kappa_h(v_i,v_j) 
+\,  \bigl(q_{jk} v_j - q_{ij}\, h v_j\bigr)\ \kappa_h(v_i,v_k)\\
& +\,  \bigl(h v_i - q_{ij} q_{ik} v_i\bigr)\ \kappa_h(v_j,v_k)\, ,
\end{aligned}
$
\item[(ii)]
\ \ \ \
$g(v_j) g(v_i) = q_{ij} g(v_i) g(v_j)\ ,$ \   and
\rule{0ex}{4ex}
\item[(iii)]
\ \ \ \ 
$\kappa_{h^{-1}gh}(v_i,v_j)  = \ \displaystyle{
\sum_{k < \ell}} \rule{0ex}{4ex}
\ {\det}_{ijkl}(h)\, \kappa_g(v_k,v_\ell)\, .$
\end{enumerate}
Moreover, if $R$ is a Gr\"obner basis, it is reduced.
\end{theorem}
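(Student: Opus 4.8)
The plan is to apply the overlap criterion of Theorem~\ref{DiamondGreen}: $R$ is a Gr\"obner basis of $\langle R\rangle$ if and only if every overlap relation among elements of $R$ reduces to zero. First I would organize the elements of $R$ by their leading monomials, which (using that $\succ$ preserves the rewriting procedure) are $t_gt_h$, $t_gv_i$, and $v_jv_i$ for $i<j$; the three families of relations are the ``group relations'' $G_{g,h}=t_gt_h-t_{gh}$, the ``twisting relations'' $T_{g,i}=t_gv_i-g(v_i)t_g$, and the ``commutation relations'' $C_{ij}=v_jv_i-q_{ij}v_iv_j-\kappa(v_i,v_j)$. I would then enumerate the possible overlaps between pairs of leading monomials: $t_gt_h$ with $t_ht_f$ (overlaps among group relations, handled as in Proposition~\ref{FinGroup}), $t_gt_h$ with $t_hv_i$, $t_gv_i$ with — nothing on the right since $v_i$ cannot begin a leading monomial other than via $v_jv_i$, so actually $t_gv_i$ overlaps on the right with $v_iv_j$ (i.e.\ $\lm(T_{g,i})=t_gv_i$ against $\lm(C_{ij})=v_jv_i$ requires the shared subword, giving $t_gv_jv_i$ when $i<j$, or $t_gv_iv_j$ when... one must be careful about which index is smaller), and finally $v_kv_j$ with $v_jv_i$ for $i<j<k$, the genuine Diamond Lemma triple overlap $v_kv_jv_i$.

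The key computational steps, in order, are: (1) show all group-relation overlaps reduce to zero — this is exactly Proposition~\ref{FinGroup} and needs no rework; (2) show the $\{t_gt_h,\ t_hv_i\}$ overlaps reduce to zero using $g(h(v_i))=(gh)(v_i)$, which is automatic and imposes no condition; (3) analyze the $\{t_gv_j,\ v_jv_i\}$-type overlaps: rewriting $t_g(v_jv_i)$ two ways and reducing should produce precisely condition (ii) — that $g$ respects the quantum commutation, $g(v_j)g(v_i)=q_{ij}g(v_i)g(v_j)$ — together with condition (iii) relating $\kappa_{h^{-1}gh}$ to $\kappa_g$ via the quantum minors $\det_{ijkl}(h)$; in fact this computation is essentially Equation~\ref{thirdformalrelation} from the proof of Proposition~\ref{skewpolyalg}, now read as a Gr\"obner-basis obstruction rather than a consequence of PBW; (4) analyze the triple overlap on $v_kv_jv_i$ for $i<j<k$: reduce $(v_kv_j)v_i$ and $v_k(v_jv_i)$ to normal form and collect coefficients. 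The $\K_Q[V]$-part of this reduction vanishes automatically because $Q$ is (forced to be) a quantum system of parameters, so the only surviving obstruction is the $\K G$-part, which upon collecting the coefficient of each $t_h$ gives condition (i). The main obstacle I anticipate is bookkeeping in step (4): one must carefully track how the scalars $q_{ij},q_{ik},q_{jk}$ and the group action on the $\kappa_h$-terms interleave when the $v$'s are commuted past one another and past the $t_h$ appearing inside each $\kappa(v_i,v_j)$, so that the three bracketed expressions in (i) emerge with exactly the stated coefficients $q_{ik}q_{jk}hv_k-v_k$, $q_{jk}v_j-q_{ij}hv_j$, $hv_i-q_{ij}q_{ik}v_i$.

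For the reducedness claim at the end: I would argue that whenever $R$ is a Gr\"obner basis, each element of $R$ is already reduced against the others. The leading monomials of $R$ are $t_gt_h$, $t_gv_i$, $v_jv_i$ ($i<j$); one checks that none of these divides a non-leading monomial of another relation. The tails are: $t_{gh}$ (a single variable $t_{gh}$), $g(v_i)t_g$ (terms $v_bt_g$), $q_{ij}v_iv_j+\sum_g\kappa_g(v_i,v_j)t_g$ (terms $v_iv_j$ with $i<j$, and terms $v_bt_g$). A monomial $t_at_b$ cannot be a subword of $t_{gh}$ (length~1) nor of $v_bt_g$ nor of $v_iv_j$; a monomial $t_av_b$ cannot appear in $v_iv_j$ (no $t$'s) and cannot appear in $v_bt_g$ (wrong order, $t$ after $v$) nor in $t_{gh}$; a monomial $v_jv_i$ with $i<j$ cannot divide $v_iv_j$ (that would need $j<i$) nor $v_bt_g$ nor $t_{gh}$. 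Hence no leading monomial of a relation in $R$ divides any monomial of any other relation or of its own tail, so $R$ is reduced. One subtlety: the argument that a leading monomial does not divide a monomial of \emph{its own} tail is exactly the hypothesis $\lm(p_{ij})\prec v_jv_i$ etc., i.e.\ the rewriting-preservation of $\succ$, so this must be invoked; but since we are assuming $\succ$ has that property throughout Section~\ref{ConditionsOnParametersSection} (and Lemma~\ref{auts}/Proposition~\ref{skewpolyalg} guarantee $Q$ is a quantum system of parameters so the $v_iv_j$ term is genuinely lower), reducedness follows. I expect the proof to conclude by simply stating that these finitely many overlap computations, carried out explicitly, yield exactly (i)--(iii), with step (4) being the substantive calculation and the rest being either immediate or already done in earlier propositions.
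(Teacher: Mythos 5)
Your proposal follows essentially the same route as the paper: apply the overlap criterion of Theorem~\ref{DiamondGreen}, dispose of the group-algebra overlaps via Proposition~\ref{FinGroup} and the $t_ht_g$/$t_gv$ overlaps by associativity of the $G$-action, then extract condition (i) from the triple overlap on $v_kv_jv_i$ and conditions (ii)--(iii) from the $t_hv_j$/$v_jv_i$ overlap, which is exactly the paper's computation (your added argument for reducedness is fine and more explicit than the paper's). One small correction: in step (4) the cubic term $q_{ij}q_{ik}q_{jk}\,v_iv_jv_k$ cancels between the two reductions of $v_kv_jv_i$ for purely combinatorial reasons (the same product of parameters arises along either reduction path), not because $Q$ is a quantum system of parameters --- that overlap imposes no condition on $Q$ whatsoever.
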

\begin{proof}
We derive necessary and sufficient conditions under which
$R$ is a Gr\"obner basis of the
ideal it generates in the free associative algebra $\FF$
using Theorem~\ref{DiamondGreen}: We
examine all overlap polynomials $o=o(f_1,f_2,m_1,m_2)$
with $f_1,f_2$ in $R$ and $m_1,m_2$ monomials in $\FF$.
Setting the complete reduction $\NF(o,R)$ of each overlap
to zero in $\FF$
gives a set of necessary and sufficient conditions for $R$
to be a Gr\"obner basis of the ideal $\langle R \rangle$ in $\FF$.

By Lemmas~\ref{RedIsNF} and~\ref{NFprop},
the algorithm $\NF$ produces a reduced normal form
and hence its output is unique up to a nonzero constant.  
Thus the algorithm $\NF$
gives a result independent (up to a nonzero
scalar) of any choices in the algorithm.
We forgo the explicit computations and just record the results here.

Since the set of relations of the group algebra
$\K G$ forms a Gr\"obner basis of the ideal it generates
in the free algebra
$\K\langle t_g \mid g \in G^* \rangle$
(by Proposition~\ref{FinGroup}, for example), we are left
with only three kinds of possibly nonzero
overlaps between elements from $R$:

(a) There is an overlap relation between 
$t_h t_g-t_{hg}$ and $t_g v-g(v)t_g$
for any $v=v_i$ and $g,h$ in $G$,
namely,
$$o=(t_h t_g-t_{hg})v-t_h(t_g v-g(v)t_g)\ .
$$
The complete reduction algorithm applied to $o= -t_{hg}v+t_h g(v) t_g$
yields zero: 
%
$\NF(o,R)=0$ for this type of overlap.

(b) There is an overlap relation between elements
$v_k v_j-q_{jk}v_j v_k - \kappa(v_j, v_k)$ and 
$v_j v_i-q_{ij}v_i v_j - \kappa(v_i, v_j)$ 
for distinct $1\leq i,j,k\leq n$
obtained by multiplying the first on the right by $v_i$
and the second on the left by $v_k$.
Applying the complete reduction algorithm
gives $\NF(o,R)$
as the \textit{non-degeneracy expression} (see~\cite{NDC})
\[
\sum_g 
(q_{ik} q_{jk} g(v_k) - v_k) a^{ij}_g t_g
+
(q_{jk} v_j - q_{ij} g(v_j)) a^{ik}_g t_g
+
(g(v_i) - q_{ij} q_{ik} v_i) a^{jk}_g t_g
\]
(where we abbreviate $a_g^{ij}:=\kappa_g(v_i,v_j)$),
which is zero in $\FF$
if and only if
$$
\begin{aligned}
0\ = \ 
(q_{ik} q_{jk} g v_k - v_k)\ \kappa_g(v_i,v_j)
 +\ (q_{jk} v_j - & q_{ij} g v_j)\ \kappa_g(v_i,v_k)\\
+\ & (g v_i - q(i,j) q_{ik} v_i)\ \kappa_g(v_j,v_k) \ 
\end{aligned}
$$
for each $g$ in $G$.
This is precisely condition (i) of the theorem.

(c) For all $h$ in $G$ and $i\neq j$,
there is an overlap relation $o$
between $t_h v_j-h(v_j)t_h$
and $v_j v_i-q_{ij}v_i v_j - \kappa(v_i, v_j)$ obtained
by multiplying the first on the right by $v_i$
and the second on the left by $t_h$:
$$
\begin{aligned}
o & =\ 
t_h (v_j v_i - q_{ij}v_iv_j - \kappa(v_i,v_j)) - (t_h v_j - h(v_j) t_h ) v_i \\
& =\   - q_{ij}t_h v_i v_j - t_h \kappa(v_i,v_j)  + h(v_j) t_h v_i\ .
\end{aligned}
$$
The complete reduction algorithm reduces $o$ 
to
\begin{equation}\label{thirdformalrelation2}
\begin{aligned}
\NF(o,R)\, = 
 \,  & \sum_{k < \ell} 
 \biggl(
q_{k\ell}\ {\det}_{ijk\ell}(h) + {\det}_{ij\ell k}(h)
\biggr)\ v_{k} v_{\ell}\, t_h + 
\sum_{k} {\det}_{ijkk}\ v_{k}^2\, t_h \\
& +
\sum_{g\in G} 
\Bigg(
\sum_{k < \ell}\ {\det}_{ijk\ell}(h)\   
\kappa_g(v_k,v_\ell)\ 
- \kappa_{h^{-1}gh}(v_i,v_j)\Bigg) t_{gh}  \, .\\
\end{aligned}
\end{equation}
But $\NF(o,R)$ vanishes in $\FF$
exactly when the coefficient of each monomial in 
Equation~\ref{thirdformalrelation2} vanishes.
Lemma~\ref{auts} implies that
the coefficient of each
$v_k v_{\ell} t_{gh}$  and each 
$v_m^2 t_{gh}$ vanish in Equation~\ref{thirdformalrelation2} if and only if 
condition (ii) of the theorem holds.
The coefficient of each $t_{gh}$ in Equation~\ref{thirdformalrelation2} 
vanishes for all $i\neq j$ exactly when 
condition (iii) of the theorem holds.
\end{proof}

\vspace{3ex}

\begin{remark}
\label{distinct}
Observe that if $\cH_{Q,\kappa}$ has the standard PBW basis,
then conditions (i), (ii), (iii) for $i<j<k$
in the above theorem are equivalent 
to conditions (i), (ii), (iii) with {\em arbitrary} indices $i,j,k$.
Indeed, from the definition of quantum minor:
$${\det}_{jikl}(h)=-q_{ji}\,{\det}_{ijkl}(h)$$
for all $h$ in $G$.  If $\cH_{Q,\kappa}$ has the standard PBW basis, then
Proposition~\ref{skewpolyalg} implies
in addition that $\kappa(v_j, v_i)=q_{ij}^{-1}\kappa(v_i, v_j)$.
These two facts allow us to replace increasing by arbitrary indices in the conditions
of the theorem when it might be helpful.
\end{remark}

We now use the last theorem and the connection between Gr\"obner
bases and standard bases in the last section to show that 
$\cH_{Q,\kappa}= \FF/ \langle R' \rangle$ 
has the standard PBW basis 
if and only if the conditions of the last theorem hold.
(We will see in Section~\ref{HochschildCohomologySection} that these conditions
have a natural interpretation in terms of Hochschild cocycles.)

\vspace{2ex}

\begin{theorem}
\label{formalassociativityconditions2}
The factor algebra
$\cH_{Q,\kappa}$ is a \qdha\
if and only if the following four conditions hold:
\begin{enumerate}
\item[(i)] 
The matrix $Q$ 
is a quantum system of parameters
and $G$ acts on the
quantum polynomial algebra $S_Q(V)$
as automorphisms,
\item[(ii)]
\rule{0ex}{2ex}
The parameter $\kappa$ defines a quantum 2-form:
$$
\kappa(v_i, v_j)= - q_{ji}^{-1} \kappa(v_j, v_i) \text{ for distinct  }i,j\ ,
$$
\item[(iii)]
\rule{0ex}{2ex}
For all $h$ in $G$ and $1\leq i<j<k\leq n$,
\begin{align*}
& 0 \ =\ \ \bigl(q_{ik} q_{jk}\, h v_k - v_k\bigr)\ \kappa_h(v_i,v_j) \\
& \quad\quad + 
\bigl(q_{jk} v_j - q_{ij}\, h v_j\bigr)\ \kappa_h(v_i,v_k)\\
& \quad\quad +
\bigl(h v_i - q_{ij} q_{ik} v_i\bigr)\ \kappa_h(v_j,v_k)\ ,
\end{align*}
\item[(iv)]
\rule{0ex}{.5ex}
For all $g,h$ in $G$ and all $1\leq i<j\leq n$,
$$
\kappa_{h^{-1}gh}(v_i,v_j)  = \sum_{k < \ell} {\det}_{ijkl}(h)\, \kappa_g(v_k,v_\ell)\ .
$$
\end{enumerate}
\end{theorem}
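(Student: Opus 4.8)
The plan is to deduce both implications from Theorem~\ref{formalassociativityconditions}, together with the dictionary between Gr\"obner and PBW bases of Section~\ref{GBPBWBasisSection}. The first step I would take is a reduction of the defining ideal: whenever $Q$ is a quantum system of parameters and $\kappa$ is a quantum $2$-form, the ideals $\langle R'\rangle$ and $\langle R\rangle$ coincide in $\FF$. Indeed, the generators of $R'$ not already in $R$ are the diagonal ones $v_iv_i-q_{ii}v_iv_i-\kappa(v_i,v_i)=(1-q_{ii})v_i^2-\kappa(v_i,v_i)$, which vanish since $q_{ii}=1$ and $\kappa(v_i,v_i)=0$, and those of the form $v_jv_i-q_{ij}v_iv_j-\kappa(v_i,v_j)$ with $i>j$, which, after substituting the corresponding generator of $R$ for $v_iv_j$, become $(1-q_{ij}q_{ji})v_jv_i-q_{ij}\kappa(v_j,v_i)-\kappa(v_i,v_j)$ and hence vanish since $q_{ij}q_{ji}=1$ and $\kappa(v_j,v_i)=-q_{ij}^{-1}\kappa(v_i,v_j)$. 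Thus, under hypotheses (A) and (B), $\cH_{Q,\kappa}$ may be identified with $\FF/\langle R\rangle$.

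For the forward direction, suppose $\cH_{Q,\kappa}$ is a \qdha. Proposition~\ref{skewpolyalg} gives (A) and (B) at once, so $\cH_{Q,\kappa}=\FF/\langle R\rangle$ carries the standard PBW basis $B$. Fix a monomial ordering $\succ$ on $\FF$ preserving the rewriting procedure (Remark~\ref{rewritingexists}); with respect to $\succ$ the leading monomials of the generators of $R$ are exactly $v_jv_i$ (for $i<j$), $t_gv_i$, and $t_gt_h$, so the set of monomials not lying in the leading ideal $L(R)$ is precisely $B$. Since $L(R)\subseteq L(\langle R\rangle)$, the Gr\"obner coset basis $B_{(\succ)}$, being the complement of $L(\langle R\rangle)$, satisfies $B_{(\succ)}\subseteq B$. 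But $B_{(\succ)}$ is a $\K$-basis of $\FF/\langle R\rangle$ by Proposition~\ref{GroebnerCosetBasis}, and so is $B$ by hypothesis; since a subset of a basis which is again a basis must equal that basis, $B_{(\succ)}=B$, whence $L(\langle R\rangle)=L(R)$ and $R$ is a Gr\"obner basis of $\langle R\rangle$. Theorem~\ref{formalassociativityconditions} now yields its conditions (i) and (iii), which are exactly (C) and (D); combined with (A) and (B) this gives the forward implication.

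For the converse, assume (A)--(D). By the first step, (A) and (B) identify $\cH_{Q,\kappa}$ with $\FF/\langle R\rangle$. Fix $\succ$ preserving the rewriting procedure. Conditions (C) and (D) are precisely conditions (i) and (iii) of Theorem~\ref{formalassociativityconditions}, and condition (ii) of that theorem is, by Lemma~\ref{auts}, exactly the clause in (A) that $G$ acts on $\K_Q[V]$ by automorphisms; hence all three conditions of Theorem~\ref{formalassociativityconditions} hold, so $R$ is a Gr\"obner basis of $\langle R\rangle$ with respect to $\succ$. Proposition~\ref{GB2PBW} then shows that $\FF/\langle R\rangle=\cH_{Q,\kappa}$ has the standard PBW basis $B$, that is, $\cH_{Q,\kappa}$ is a \qdha.

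The step I expect to be the main obstacle is the implication, in the forward direction, that possessing \emph{some} monomial $\K$-basis (the standard PBW basis $B$) forces the natural defining relations $R$ to be a Gr\"obner basis: a priori a PBW basis says nothing about the leading terms of the generators. The argument above handles this via the inclusion $B_{(\succ)}\subseteq B$, forced by $L(R)\subseteq L(\langle R\rangle)$, together with the fact that a basis has no proper sub-basis; but it works only because $\succ$ is chosen to preserve the rewriting procedure, so that the leading ideal $L(R)$ of the generators has exactly $B$ as its complement of monomials, and one must therefore fix $\succ$ \emph{before} reading off leading monomials. A secondary subtlety is that one need only verify (C) and (D) for $i<j<k$; once the PBW property is available this is Remark~\ref{distinct}, and in the forward direction Theorem~\ref{formalassociativityconditions} already delivers precisely that form.
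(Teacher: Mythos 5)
Your proposal is correct, and its overall skeleton is the same as the paper's: both directions are routed through Theorem~\ref{formalassociativityconditions}, with Proposition~\ref{skewpolyalg} supplying (A) and (B) in the forward direction and Proposition~\ref{GB2PBW} closing the converse. The one place where you genuinely diverge is the key step of the forward direction, namely that the PBW property forces $R$ to be a Gr\"obner basis of $\langle R\rangle$. The paper argues via overlaps: each $\NF(o,R)$ lies both in $\operatorname{span}_{\K}(B)$ and in $\langle R\rangle$, hence encodes a linear dependence among elements of $B$ modulo the ideal, hence vanishes by independence of $B$, and Theorem~\ref{DiamondGreen} applies. You instead compare leading ideals directly: with a rewriting-preserving ordering the complement of $L(R)$ is exactly $B$, so $L(R)\subseteq L(\langle R\rangle)$ gives $B_{(\succ)}\subseteq B$, and since a basis admits no proper sub-basis the two monomial ideals coincide, which is the definition of a Gr\"obner basis. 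Your version is slightly more economical (it never needs to know where the normal forms of overlaps land, only Proposition~\ref{GroebnerCosetBasis}), while the paper's version has the advantage of pinpointing exactly which linear dependences among quasi-standard monomials would obstruct the PBW property, which is what makes conditions (C) and (D) explicit. Your explicit verification that $\langle R'\rangle=\langle R\rangle$ under (A) and (B) fills in a step the paper leaves implicit, and your closing remark correctly identifies why fixing the ordering before reading off leading monomials matters.
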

\begin{proof}
Fix any monomial ordering $\succ$ on $\FF$ which satisfies
the rewriting procedure with $v_1\succ v_2\succ \ldots v_n\succ t_g$
(see Remark~\ref{rewritingexists} for an explicit choice).
By Theorem~\ref{formalassociativityconditions},
the conditions of the theorem imply that
$R$ is a Gr\"obner basis of the ideal it generates
and that $\cH_{Q,\kappa}=\FF/\langle R' \rangle
=\FF/\langle R\rangle$.  Thus $\cH_{Q,\kappa}$
has the standard PBW basis
by Proposition~\ref{GB2PBW}.

Conversely, assume that $\cH_{Q,\kappa}$ has the standard PBW basis.
Proposition~\ref{skewpolyalg} implies conditions (i) and (ii)
and thus $\cH_{Q,\kappa}=\FF/\langle R' \rangle=\FF/\langle R\rangle$.  
We saw in the proof Theorem~\ref{formalassociativityconditions}
that the overlap polynomial $o$ of any elements in $R$
has normal form $\NF(o)$ lying in $\text{span}_{\K}(B)$.
But each $\NF(o)$ lies in $\langle R \rangle$ as well (since each overlap $o$ does).
Thus each $\NF(o)$ gives a linear dependence modulo $\langle R \rangle$
among elements of $B$.  
As $B$ is a standard PBW basis, 
each $\NF(o)$ must then be zero in the free
algebra $\FF$.  
Thus $R$ is a Gr\"obner basis of 
the ideal it generates
by Theorem~\ref{DiamondGreen}.
The result then follows from
Theorem~\ref{formalassociativityconditions}.
\end{proof}

The last theorem immediately implies (set $\kappa\equiv 0$)
the following corollary.
\begin{cor}\label{PBWforquantumalgebra}
Suppose $G$ acts as automorphisms on a quantum polynomial algebra
$S_Q(V)$.  Then 
$B=\{v_1^{\alpha_1} \ldots v_n^{\alpha_n} t_g \mid \alpha_i\in\N, \ g\in G\}$
is a monomial $\K$-basis for
$S_Q(V)\# G$.
\end{cor}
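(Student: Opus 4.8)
The plan is to derive Corollary~\ref{PBWforquantumalgebra} as the special case $\kappa \equiv 0$ of Theorem~\ref{formalassociativityconditions2}, so the main task is simply to verify that the four conditions (A)--(D) hold automatically under the hypotheses. First I would note that when $\kappa \equiv 0$, every scalar $\kappa_g(v_i,v_j)$ vanishes, so in particular $\cH_{Q,\kappa}$ is literally the factor algebra $T(V)\#G / \langle v_j v_i - q_{ij} v_i v_j : 1\le i,j\le n\rangle$, which is exactly the skew group algebra $\K_Q[V]\#G$ (using the description of $\cH_{Q,\kappa}$ as a quotient of $T(V)\#G$ given just before Section~\ref{NoncommutativeTheorySection}, and the fact that $G$ acting by automorphisms on $\K_Q[V]$ forces $Q$ to be a quantum system of parameters by Lemma~\ref{auts} applied to each $g\in G$, or can simply be assumed part of the data defining $\K_Q[V]$).

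Next I would check the conditions one by one. Condition~(A) is precisely the hypothesis of the corollary, together with the observation that ``$G$ acts as automorphisms on $\K_Q[V]$'' already presupposes (or forces, via Corollary~\ref{transpose}/Lemma~\ref{auts}) that $Q$ is a quantum system of parameters. Condition~(B) reads $\kappa(v_i,v_j) = -q_{ji}^{-1}\kappa(v_j,v_i)$; since both sides are $0$ when $\kappa\equiv 0$, it holds trivially. Condition~(C) is an identity whose every term contains a factor $\kappa_h(v_a,v_b)$, hence every term is $0$ and the identity holds. Condition~(D) reads $\kappa_{h^{-1}gh}(v_i,v_j) = \sum_{k<\ell} \det_{ijk\ell}(h)\,\kappa_g(v_k,v_\ell)$; with $\kappa\equiv 0$ both sides are again $0$, so it holds. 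Therefore all of (A)--(D) are satisfied, Theorem~\ref{formalassociativityconditions2} applies, and $\cH_{Q,\kappa} = \K_Q[V]\#G$ has the standard PBW basis $B$.

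There is essentially no obstacle here — the only thing requiring a moment's care is the bookkeeping that the factor algebra $\cH_{Q,0}$ is genuinely identified with $\K_Q[V]\#G$ and not merely some quotient of it, so that ``$\cH_{Q,0}$ has the standard PBW basis'' is the same statement as ``$B$ is a monomial $\K$-basis of $\K_Q[V]\#G$.'' This identification is immediate from the presentation of $\cH_{Q,\kappa}$ recorded at the end of Section~\ref{QuantumPolynomialRingsSection}. Hence the corollary follows.

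\begin{proof}
Apply Theorem~\ref{formalassociativityconditions2} with $\kappa \equiv 0$. In this case the factor algebra $\cH_{Q,\kappa}$ is exactly the skew group algebra $\K_Q[V]\#G$ (see the presentation of $\cH_{Q,\kappa}$ as a quotient of $T(V)\#G$). Condition~(A) holds by hypothesis; since $G$ acts on $\K_Q[V]$ as automorphisms, $Q$ is a quantum system of parameters (Lemma~\ref{auts}). Conditions~(B), (C), and (D) each equate expressions built entirely out of the scalars $\kappa_g(v_i,v_j)$, all of which vanish, so each condition reduces to $0 = 0$. Hence Theorem~\ref{formalassociativityconditions2} shows $\K_Q[V]\#G = \cH_{Q,\kappa}$ has the standard PBW basis $B$, as claimed.
\end{proof}
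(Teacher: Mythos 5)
Your proposal is correct and is exactly the paper's argument: the paper derives this corollary by setting $\kappa\equiv 0$ in Theorem~\ref{formalassociativityconditions2}, and your verification that conditions (B)--(D) collapse to $0=0$ while (A) is the hypothesis simply spells out the details the paper leaves implicit.
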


\vspace{2ex}

\begin{remark}
Fix some $q$ in $\K$ and 
suppose $q_{ij}=q$ for
all $1\leq i<j\leq n$.
Then for all $i,j,k$, the first part of condition (iii) 
of the last theorem is equivalent to
\[0\ = \
(q^2 g v_k - v_k) \kappa_g(v_i,v_j) + q(v_j - g v_j) \kappa_g(v_i,v_k)+ (g v_i - q^2 v_i) \kappa_g(v_j,v_k) \ .
\]
\end{remark}

\vspace{3ex}

\begin{remark} 
Condition (iii) from the last theorem
can be written explicitly 
in terms of the entries of any matrix
$h$ in $G$.  Again, fix scalars $h_b^a$ in $\K$ with $h(v_a)=\sum_b h_b^a v_b$
and abbreviate $a^{ij}_g$ for $\kappa_g(v_i,v_j)$.
Then condition (iii) holds exactly when
\begin{itemize}
\item 
$0=(q_{ik} q_{jk} h^k_k -1 )a^{ij}_g - q_{ij} h^j_k a^{ik}_g + h^i_k a^{jk}_g \ ,$
\item 
$0=q_{ik} q_{jk} h^k_j a^{ij}_g + (q_{jk} - q_{ij} h^j_j) a^{ik}_g + h^i_j a^{jk}_g\ ,$
\item 
$0=q_{ik} q_{jk} h^k_i a^{ij}_g - q_{ij} h^j_i a^{ik}_g + (h^i_i - q_{ij} q_{ik}) a^{jk}_g\ ,$
\quad\quad\text{and}
\item
$0=q_{ik} q_{jk} h^k_\ell a^{ij}_g - q_{ij} h^j_\ell a^{ik}_g + h^i_\ell a^{jk}_g \ 
$
\end{itemize}
for all $g$ in $G$, all $i<j<k$, and
any $\ell$ not in $\{i,j,k\}$.
\end{remark}

\vspace{1ex}


\section{Abelian Groups}\label{AbelianSection}

In this section, we assume $G$ in $\GL_n(\K)$
is abelian acting diagonally on $v_1, \ldots, v_n$.
Let $\chi_i:G\rightarrow \K^*$ be the linear character 
recording the $i$-th diagonal entry, i.e.,
$gv_i = \chi_i(g) v_i$ for all $g$ in $G$ and $1\leq i\leq n$.
We deform the skew group algebra $S_Q(V)\# G$ by setting each $q$-commutator
$v_i v_j-q_{ji} v_j v_i$ to a group element $g$ whose $i$-th and $j$-th entries
are inverse and whose $k$-th entry is the scalar that arises upon interchanging $v_i v_j$ and $v_k$ in the quantum algebra $S_Q(V)\# G$:
$$
(v_i v_j) v_k = (q_{ki} q_{kj})\ v_k (v_i v_j).
$$
In fact, we will take linear combinations of such group elements $g$ and also
insist
that {\em every} element in $G$ has inverse $i,j$ entries:
$\chi_i=\chi_j^{-1}$.
Indeed, we apply
Theorem~\ref{formalassociativityconditions2}
carefully for diagonal actions to deduce the following corollary.

\vspace{1ex}

\begin{cor}
Suppose $G$ is abelian acting diagonally.
Then $\cH_{Q,\kappa}$ is a \qdha\
if and only if the following hold:
\begin{itemize}
\item
 $Q$ is a quantum system of parameters,
\item
$\kappa$ is a quantum $2$-form,
\item
for all $g$ in $G$ and
$i\neq j$,
$\kappa_g(v_i,v_j)\neq 0$ implies that
$\chi_i=\chi^{-1}_j$ and 
$\chi_k(g) =q_{ki}\, q_{kj}$ for all $k \neq i,j$\ .
\end{itemize}
\end{cor}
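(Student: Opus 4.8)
The plan is to specialize Theorem~\ref{formalassociativityconditions2} to a diagonal action and simplify each of its conditions (A)--(D). Throughout, write $h(v_a)=\chi_a(h)\,v_a$ for $h\in G$, so that the only nonzero entries of $h$ are the diagonal ones, $h^a_a=\chi_a(h)$; the quantum minor then collapses to the monomial
\[
{\det}_{ijk\ell}(h)=\chi_i(h)\chi_j(h)\bigl(\delta_{ik}\delta_{j\ell}-q_{ij}\,\delta_{i\ell}\delta_{jk}\bigr).
\]
First I would note that a diagonal $h$ visibly sends each defining relation of $\K_Q[V]$ to a scalar multiple of itself (both $v_jv_i-q_{ij}v_iv_j$ and its image scale by $\chi_i(h)\chi_j(h)$), so $G$ automatically acts on $\K_Q[V]$ by automorphisms; hence in this setting Condition (A) reduces to ``$Q$ is a quantum system of parameters,'' and Condition (B) is verbatim ``$\kappa$ is a quantum $2$-form.''

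Next I would dispose of Condition (D). Since $G$ is abelian, $h^{-1}gh=g$; and for $i<j$ the only surviving term in $\sum_{k<\ell}{\det}_{ijk\ell}(h)\,\kappa_g(v_k,v_\ell)$ is $(k,\ell)=(i,j)$ (the other possible contribution would require $\delta_{i\ell}\delta_{jk}\neq 0$, i.e.\ $k>\ell$). Thus (D) becomes $\kappa_g(v_i,v_j)=\chi_i(h)\chi_j(h)\,\kappa_g(v_i,v_j)$ for all $h\in G$, which holds for every $h$ precisely when $\kappa_g(v_i,v_j)=0$ or $\chi_i=\chi_j^{-1}$. Using the quantum $2$-form identity of (B) to interchange the two indices, this is exactly: $\kappa_g(v_i,v_j)\neq 0\Rightarrow\chi_i=\chi_j^{-1}$ for all $i\neq j$.

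The main work is Condition (C). Substituting $h\,v_m=\chi_m(h)v_m$ turns (C), for $i<j<k$, into the identity in $V$
\[
0=\bigl(q_{ik}q_{jk}\chi_k(h)-1\bigr)\kappa_h(v_i,v_j)\,v_k
+\bigl(q_{jk}-q_{ij}\chi_j(h)\bigr)\kappa_h(v_i,v_k)\,v_j
+\bigl(\chi_i(h)-q_{ij}q_{ik}\bigr)\kappa_h(v_j,v_k)\,v_i,
\]
and since $v_i,v_j,v_k$ are linearly independent this is equivalent to the vanishing of all three scalar coefficients. Running over all triples $i<j<k$, the coefficient attached to a fixed $\kappa_h(v_a,v_b)$ (with $a<b$) occurs once for each third index $c\notin\{a,b\}$: as the first coefficient when $c>b$, the second when $a<c<b$, and the third when $c<a$; in each case, using $q_{ab}q_{ba}=1$ to simplify, the coefficient equation reads ``$\kappa_h(v_a,v_b)=0$ or $\chi_c(h)=q_{ca}q_{cb}$.'' Assembling these over all $c$ and all $h$, and again using the quantum $2$-form identity to pass between $(a,b)$ and $(b,a)$, shows that (C) over all $i<j<k$ is equivalent to: for every $g\in G$ and $i\neq j$, $\kappa_g(v_i,v_j)\neq 0$ implies $\chi_k(g)=q_{ki}q_{kj}$ for all $k\neq i,j$. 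Combining the reformulations of (A)--(D) gives the corollary. I expect the only genuine obstacle to be the bookkeeping in this last step: correctly matching, for each ordered triple, the three coefficient equations of (C) to the three pairs $\{i,j\},\{i,k\},\{j,k\}$, and checking that every (pair, third-index) combination arises exactly once and yields the stated relation among the characters.
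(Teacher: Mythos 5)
Your proposal is correct and follows exactly the route the paper intends: the corollary is presented as a careful specialization of Theorem~\ref{formalassociativityconditions2} to diagonal actions, and your reduction of (A), (B), (D), and the triple-indexed Condition (C) — including the bookkeeping that matches each pair $\{a,b\}$ with each third index $c$ to exactly one vanishing coefficient — is the computation the paper leaves implicit. No gaps.
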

The next proposition 
gives a complete description of \qdha\ in the abelian setting.

\begin{prop}
Suppose $G$ is an abelian group acting diagonally
on the basis $v_1, \ldots,v_n$.
Then the set of \qdha s comprises all factor algebras
of the form
$$
\K\langle v_1, \ldots, v_n\rangle \# G /
\big\langle
v_j v_i - q_{ij} v_i v_j -
\sum_{g \in G^{ij}} c_g^{\, ij}\ g:
\ 1\leq i < j \leq n \ \text{with}\
\chi_i=\chi_j^{-1}
\big\rangle
$$
where 
$G^{ij}=\{g\in G:
\chi_k(g)=q_{ki}q_{kj}
\ \text{for all}\ k\neq i,j\}
$ and
the $c_g^{\, ij}$ are arbitrary scalars in $\K$.
\end{prop}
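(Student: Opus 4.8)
The plan is to read the statement straight off the preceding corollary, which already records exactly when $\cH_{Q,\kappa}$ is a \qdha\ when $G$ acts diagonally. What is left is bookkeeping: translating that corollary's conditions on $Q$ and $\kappa$ into the explicit shape of the defining relations, and accounting for the fact that one only lists relations for indices $i<j$.

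For the forward direction I would suppose $\cH_{Q,\kappa}$ is a \qdha\ and apply the corollary: $Q$ is a quantum system of parameters, $\kappa$ is a quantum $2$-form, and $\kappa_g(v_i,v_j)\neq 0$ (for $i\neq j$) forces both $\chi_i=\chi_j^{-1}$ and $\chi_k(g)=q_{ki}q_{kj}$ for every $k\neq i,j$ — the last two conditions saying precisely that $g\in G^{ij}$. Hence for each $i<j$ the element $\kappa(v_i,v_j)=\sum_g\kappa_g(v_i,v_j)\,t_g$ is supported on $G^{ij}$ and vanishes unless $\chi_i=\chi_j^{-1}$, so setting $c_g^{\,ij}:=\kappa_g(v_i,v_j)$ exhibits $\cH_{Q,\kappa}$ in the displayed form. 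Listing only the $i<j$ relations loses nothing because $\kappa$ is a quantum $2$-form, so the $i>j$ relations are determined.

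For the converse I would start from arbitrary scalars $c_g^{\,ij}$, $1\le i<j\le n$, with $c_g^{\,ij}=0$ whenever $\chi_i\neq\chi_j^{-1}$. First set $q_{ji}:=q_{ij}^{-1}$ and $q_{ii}:=1$, so that $Q$ is a quantum system of parameters; this is also what makes $G^{ij}$, whose definition refers to $q_{ki}$ and $q_{kj}$ for every $k$, meaningful. Then define $\kappa$ on basis pairs by $\kappa(v_i,v_i)=0$, $\kappa(v_i,v_j)=\sum_{g\in G^{ij}}c_g^{\,ij}\,t_g$ for $i<j$, and $\kappa(v_j,v_i)=-q_{ij}^{-1}\kappa(v_i,v_j)$ for $i<j$. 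Now $\kappa$ is a quantum $2$-form by construction, and I would verify the support condition of the corollary: if $\kappa_g(v_i,v_j)\neq 0$ with $i\neq j$, then (using that the defining condition of $G^{ij}$ is symmetric in $i,j$, so one may assume $i<j$) membership $g\in G^{ij}$ gives $\chi_k(g)=q_{ki}q_{kj}$ for $k\neq i,j$, while $\chi_i=\chi_j^{-1}$ holds because the correction was allowed to be nonzero. All three of the corollary's conditions then hold, so $\cH_{Q,\kappa}$ is a \qdha, and it is visibly the displayed factor algebra. (That the diagonal action is automatically by automorphisms of $\K_Q[V]$ — so this need not be checked separately — is why the corollary's hypotheses make no mention of it.)

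There is no real obstacle: the substantive work was already done in deducing the corollary from Theorem~\ref{formalassociativityconditions2}, in particular the point that the non-degeneracy Condition (C) is automatic for diagonal actions, since each scalar coefficient $q_{ik}q_{jk}\,hv_k-v_k$, $q_{jk}v_j-q_{ij}\,hv_j$, $hv_i-q_{ij}q_{ik}v_i$ annihilates the $\kappa_h$-term it multiplies exactly when $h$ lies in $G^{ij}$, $G^{ik}$, $G^{jk}$ respectively (using $q_{ki}=q_{ik}^{-1}$). The only items needing a moment's attention in the present proof are the bookkeeping ones flagged above: extending $Q$ to a quantum system of parameters before invoking $G^{ij}$, and checking that restricting the $\kappa$-corrections to pairs $i<j$ with $\chi_i=\chi_j^{-1}$ throws away nothing.
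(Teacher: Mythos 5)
Your proposal is correct and follows the same route the paper intends: the proposition is presented there as an immediate consequence of the preceding corollary (itself read off from Theorem~\ref{formalassociativityconditions2} for diagonal actions), and your forward/converse bookkeeping — identifying $c_g^{\,ij}$ with $\kappa_g(v_i,v_j)$, noting the support on $G^{ij}$ and the constraint $\chi_i=\chi_j^{-1}$, and in the converse extending $Q$ to a quantum system of parameters and $\kappa$ to a quantum $2$-form before checking the corollary's conditions — is exactly the derivation the paper leaves implicit. Your parenthetical verification that Condition (C) is automatic term by term (each coefficient vanishing precisely when $h$ lies in the relevant $G^{ij}$, $G^{ik}$, $G^{jk}$) is also the right justification for the corollary itself.
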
 

\vspace{0ex}

\begin{remark}
Suppose $\cH_{Q,\kappa}$ is a \qdha\
with $G$ acting diagonally
on $v_1, \ldots, v_n$. 
Fix $q$ in $\K$ and suppose $q_{ij}=q$ for all $i<j$.
Then if $c_g^{\, ji}\neq 0$ in the last corollary,
$g$ is a diagonal matrix
$$g=\diag(q^2,\ldots, q^2, c, 1,\ldots, 1, c^{-1}, q^{-2}, \ldots, q^{-2})
$$
with entries $c$ and $c^{-1}$ at $i$-th and $j$-th locations, respectively,
for some $c\in\K$.
\end{remark}

\vspace{0ex}
\begin{example}
Let $q$ be a primitive odd $k$-th 
root-of-unity in $\K$ and let $G$ be the group 
of order $k$ generated by the diagonal matrix
$$
g=\text{diag}
(q^2, 1 , q^{-2})\, .
$$
Then the $\K$-algebra $\cH$ generated by
symbols $v_1,v_2,v_3,t_{g}$
with relations
$$
\begin{aligned}
\hphantom{xxx}
& 
t_{g}^k=1, \
 t_{g} v_1=q^{2}\,v_1 t_{g},
\
t_{g} v_2=v_2 t_{g},
\
t_{g} v_3=q^{-2}\,v_3 t_{g},\\
& v_2v_1=q\,v_1 v_2, \
v_3v_2=q\,v_2 v_3, \
v_3 v_1=q\,v_1 v_3 + \sum_{i=1}^k c_i t_{g}^i\, ,
\end{aligned}
$$
for arbitrary constants $c_i$ in $\K$, is a \qdha.
\end{example}

\quad

See Naidu and Witherspoon~\cite{NW} for an explicit description
of the related Hochschild cohomology (and the cocycles
defining these algebras) for groups acting diagonally
in characteristic zero.


\section{Hochschild Cohomology}\label{HochschildCohomologySection}

Using results of Naidu and Witherspoon~\cite{NW}, one may interpret
the conditions of Theorem~\ref{formalassociativityconditions2} 
in terms of the Hochschild cohomology
of the associated skew group algebra, $\HHD(S_Q(V)\#G)$.
We assume $\K=\mathbb{C}$ (the complex numbers)
in this section and 
fix a quantum system of parameters  $Q=(q_{ij}\mid 1\leq i,j\leq n)$ defining a 
quantum polynomial algebra $S_Q(V)$.
Recall that Hochschild cohomology is a generalization
of group cohomology to a bimodule setting:
For a $\K$-algebra $C$,
$\HHD(C)=\Ext^{\DOT}_{C^e}(C, C)$ 
where $C^{e}$ is the enveloping algebra $C\otimes C^{op}$.

We may regard the quantum exterior algebra $\bigwedge_Q(V)$
(see Example~\ref{quantumexterioralgebra})
as a factor algebra of a 
quantum polynomial algebra
with respect to a nearly opposite set of scalars:
 $$\bigwedge_Q(V)=S_{Q'}(V)/ \langle v_1^2, \ldots, v_n^2 \rangle$$ 
where $Q'=(q_{ij}'\, |\, 1\leq i,j\leq n)$ is the quantum system of parameters
with $q_{ij}'=-q_{ij}$ for $i\neq j$
and $q_{ii}'=1$ for each $i$.
Proposition~\ref{auts} (together with Corollary~\ref{transpose})
applied to $S_{Q'}(V)$ then easily implies the following two observations
(where $h^t$ is the transpose of $h$).
\begin{cor}
The group $G$ acts as automorphisms on $\bigwedge_Q(V)$
if and only if for all  $h$ in $G$,
\begin{itemize}
\item[(i)]
${\det}_{ijk \ell}(h)= q_{\ell k}\ {\det}_{ij\ell k}(h)\ 
\quad\text{ for all } 1\leq i,j,k,\ell \leq n\, ,
\text{ and }$
\item[(ii)]
$\det_{ijkk}(h^t)=0\ \ \,
\quad\quad\quad\quad \ \quad\text{ for all } k \text{ and }i<j\, .$ 
\end{itemize}
\end{cor}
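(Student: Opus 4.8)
The plan is to realise $\bigwedge_Q(V)$ as the quotient $T(V)/\mathfrak{I}$, where $\mathfrak{I}$ is the two-sided ideal generated by the quantum commutators $v_jv_i-q'_{ij}v_iv_j$ for $i<j$ (with $q'_{ij}=-q_{ij}$, using the identification $\bigwedge_Q(V)=\K_{Q'}[V]/\langle v_1^2,\dots,v_n^2\rangle$ recorded just before the statement) together with the squares $v_1^2,\dots,v_n^2$. A transformation $h$ in $\GL(V)$ acts as an automorphism on $\bigwedge_Q(V)$ exactly when the algebra endomorphism of $T(V)$ it induces satisfies $h(\mathfrak{I})\subseteq\mathfrak{I}$, and since $\mathfrak{I}$ is generated by the listed elements this amounts to checking that $h$ carries each of these generators back into $\mathfrak{I}$. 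Because the squares already lie in $\mathfrak{I}$, the test splits into two independent pieces, one from the quantum commutators (which will yield (a)) and one from the squares (which will yield (b)).

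For the commutator generators I would rerun the computation in the proof of Lemma~\ref{auts}, but with the parameters $q'_{ij}=-q_{ij}$ of $\K_{Q'}[V]$ in place of the $q_{ij}$: expand $h(v_jv_i-q'_{ij}v_iv_j)$ as a $\K$-combination of the monomials $v_kv_\ell$, rewrite it in standard form, and reduce modulo $\mathfrak{I}$. Since we reduce modulo the full ideal $\mathfrak{I}$ rather than inside $\K_{Q'}[V]$, the coefficients of the diagonal monomials $v_k^2$ may be discarded, so only the coefficients of $v_kv_\ell$ with $k<\ell$ need vanish; by the same bookkeeping as in Lemma~\ref{auts}, now using $-q'_{\ell k}=q_{\ell k}$, this is precisely condition (a). Applying Corollary~\ref{transpose} to $\K_{Q'}[V]$ shows that this family of constraints is insensitive to replacing $h$ with $h^t$, which is why its form is compatible with condition (b); and exactly as in Remark~\ref{distinct} one checks that the redundant ranges $i=j$ and both orderings of $k,\ell$ add nothing new.

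For the square generators I would compute $h(v_k)^2=\bigl(\sum_a h^k_a v_a\bigr)^2$ directly inside $\bigwedge_Q(V)$, using $v_a^2=0$ and $v_bv_a=-q_{ab}v_av_b$ for $a<b$ to collect the result on the basis $\{v_av_b:a<b\}$ of the degree-two part. The coefficient of $v_av_b$ comes out to be $(1-q_{ab})h^k_ah^k_b$, which is precisely ${\det}_{abkk}(h^t)$; hence $h(v_k^2)\in\mathfrak{I}$ for every $k$ if and only if ${\det}_{ijkk}(h^t)=0$ for all $k$ and all $i<j$, which is condition (b). Running both checks over every $h$ in $G$ gives the stated equivalence.

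The step I expect to be the main obstacle is not any single computation but keeping the two sign systems straight — the flip $q_{ij}\mapsto -q_{ij}$ passing from $\K_Q[V]$ to $\K_{Q'}[V]$, together with the transpose appearing in (b) — and, more subtly, justifying that the two conditions genuinely decouple: one has to be sure that reducing modulo the full ideal $\mathfrak{I}$ (which kills the squares) is legitimate in the commutator computation, so that no ``mixed'' constraint linking a commutator to a square is lost. Once that is in place, conditions (a) and (b) drop out of the two reductions above.
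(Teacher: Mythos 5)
Your argument is correct and is exactly the route the paper intends: the paper offers no written proof beyond the remark that the statement follows from Lemma~\ref{auts} (together with Corollary~\ref{transpose}) applied to $\K_{Q'}[V]$, and your two reductions --- the commutator check with the diagonal coefficients discarded because the squares already lie in the ideal, and the direct computation $h(v_k)^2=\sum_{a<b}(1-q_{ab})h^k_a h^k_b\, v_a v_b$ with $(1-q_{ab})h^k_a h^k_b = \det_{abkk}(h^t)$ --- are precisely the details being suppressed. The one caveat, which is a looseness of the statement rather than of your proof, is that the minors in condition (a) must be read as the quantum minors of $\K_{Q'}[V]$ (i.e.\ with $-q_{ij}$ in place of $q_{ij}$ in their definition), exactly as your computation produces them; with the literal $Q$-minors condition (a) would fail already for $Q\equiv 1$, where every invertible $h$ acts on the ordinary exterior algebra.
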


\begin{cor}
$G$ acts on as automorphisms on {\em both} 
$S_Q(V)$ and on $\bigwedge_Q[V]$ if and only if
for all $h$ in $G$,
\begin{itemize}
\item[(i)]
$\det_{ijkl}(h)=0\ \ \ \text{  for all } i, j, k, l$, and
\item[(ii)]
$\det_{ijkk}(h^t)=0 \ \ \text{ for all } k \text{ and }i<j\, .$
\end{itemize}
\end{cor}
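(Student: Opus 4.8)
The plan is to read this off directly from Lemma~\ref{auts} and the preceding corollary, exploiting that those two characterizations of ``acting by automorphisms'' differ only by a sign in the minor-swap relation. First I would fix a single $h$ in $G$ and recall the two criteria: by Lemma~\ref{auts}, $h$ acts as an automorphism on $\K_Q[V]$ exactly when ${\det}_{ijk\ell}(h) = -q_{\ell k}\,{\det}_{ij\ell k}(h)$ for all $i,j,k,\ell$; by the preceding corollary, $h$ acts as an automorphism on $\bigwedge_Q(V)$ exactly when ${\det}_{ijk\ell}(h) = q_{\ell k}\,{\det}_{ij\ell k}(h)$ for all $i,j,k,\ell$ and in addition ${\det}_{ijkk}(h^t)=0$ for all $k$ and all $i<j$. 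Since ``$G$ acts by automorphisms'' means that every element of $G$ does, it suffices to prove the equivalence elementwise and then quantify over $h$.

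For the forward implication, assume that $h$ acts by automorphisms on both algebras. Adding the two relations above, whose right-hand sides are negatives of one another, yields $2\,{\det}_{ijk\ell}(h) = 0$ for all indices; as $\K=\C$ has characteristic different from $2$, this forces ${\det}_{ijk\ell}(h) = 0$, which is condition (a). Condition (b) is already among the hypotheses furnished by the preceding corollary, so it transfers verbatim.

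For the converse, assume (a) and (b) hold for $h$. Then both sides of the identity ${\det}_{ijk\ell}(h) = -q_{\ell k}\,{\det}_{ij\ell k}(h)$ vanish, so Lemma~\ref{auts} shows that $h$ is an automorphism of $\K_Q[V]$; likewise both sides of ${\det}_{ijk\ell}(h) = q_{\ell k}\,{\det}_{ij\ell k}(h)$ vanish, and combining this with (b), the preceding corollary shows that $h$ is an automorphism of $\bigwedge_Q(V)$. Running this over all $h$ in $G$ completes both directions.

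There is no substantive obstacle here; the only step needing a moment's attention is the cancellation $2\,{\det}_{ijk\ell}(h)=0 \Rightarrow {\det}_{ijk\ell}(h)=0$, which relies on the standing hypothesis $\K=\C$ of this section and would break down in characteristic $2$, where the two minor-swap relations coincide and the corollary would have to be phrased differently.
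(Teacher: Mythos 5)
Your argument is correct and is exactly the route the paper intends: the paper gives no explicit proof, stating only that the result follows "easily" from Lemma~\ref{auts} together with the corollary characterizing automorphisms of $\bigwedge_Q(V)$, which is precisely the elementwise combination you carry out. Your remark that the cancellation $2\,{\det}_{ijk\ell}(h)=0\Rightarrow{\det}_{ijk\ell}(h)=0$ uses the standing assumption $\K=\C$ (more generally, characteristic not $2$) is a valid and worthwhile observation.
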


As in Shepler and Witherspoon~\cite{SW1, SWbracket} (in the nonquantum setting),
Naidu and Witherspoon recommend associating a Hochschild cocycle to
the parameters $Q,\kappa$ defining a factor algebra $\cH_{Q,\kappa}$.
Any quantum 2-form $\kappa$ 
(see Proposition~\ref{skewpolyalg})
extends to an element of
$$
\Hom_\K\Big(\bigwedge^2_Q V,\ S_Q(V)\#G\Big)
\cong
\Hom_{S_Q(V)^e}\Big(S_Q(V)^e\otimes\bigwedge^2_QV, \ S_Q(V)\#G\Big)\, ,
$$
and thus defines a 2-cochain in the theory of Hochschild cohomology
$$
\HHD(S_Q(V),\ S_Q(V)\#G)\ 
$$
computed using a quantum Koszul resolution on $S_Q(V)$
(see~\cite{NW}).
But (see~\cite[Theorem~3.5]{NW})
$$
\HHD(S_Q(V),\ S_Q(V)\#G)^G\cong \HHD(S_Q(V)\# G)\ .
$$ 
Thus, one wonders: When does $\kappa$ define
a class in the Hochschild cohomology
$\HHD(S_Q(V)\#G)$, the
cohomology theory detecting all algebraic deformations
of $S_Q(V)\# G$ ?
Results of Naidu and Witherspoon~\cite{NW} imply
the following proposition.
\begin{prop}\label{Hochschildrelations}
Assume $G$ acts as automorphisms on both $\bigwedge_Q(V)$
and $S_Q(V)$ and $\kappa$ is a quantum $2$-form. Then
\begin{itemize}
\item
Condition (iii) of Theorem~\ref{formalassociativityconditions2}
 holds if and only if $\kappa$ is a cocycle.
\item
Condition (iv) of Theorem~\ref{formalassociativityconditions2}
 holds if and only if $\kappa$ is invariant.
\end{itemize}
\end{prop}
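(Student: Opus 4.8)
The plan is to import the explicit quantum Koszul resolution of $\K_Q[V]$ used by Naidu and Witherspoon~\cite{NW} and to match its low-degree differentials against Conditions (C) and (D) term by term. Under the standing hypotheses ($G$ acting as automorphisms on both $\bigwedge_Q(V)$ and $\K_Q[V]$, and $\kappa$ a quantum $2$-form), the group $G$ acts compatibly on the Koszul complex $K_\bullet = \K_Q[V]^e \otimes \bigwedge_Q^\bullet V$, so that $\HHD(\K_Q[V],\K_Q[V]\# G)$ is computed by the cochain complex $\Hom_{\K_Q[V]^e}(K_\bullet,\K_Q[V]\# G) \cong \Hom_\K(\bigwedge_Q^\bullet V,\ \K_Q[V]\# G)$, and the quantum $2$-form $\kappa$ (with values in $\K G \subset \K_Q[V]\# G$) is a $2$-cochain there. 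Under the identification $\HHD(\K_Q[V]\# G) \cong \HHD(\K_Q[V],\K_Q[V]\# G)^G$ of~\cite[Theorem~3.5]{NW}, the phrase ``$\kappa$ is invariant'' in the statement means $g\cdot\kappa = \kappa$ for all $g$ in $G$ with respect to the $G$-action on this cochain complex.

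For the first bullet, I would write out the Koszul differential $d_3\colon K_3\to K_2$ evaluated on $1\otimes 1\otimes v_i\wedge v_j\wedge v_k$ for $i<j<k$: it is an alternating sum of three terms, one for each face $v_j\wedge v_k$, $v_i\wedge v_k$, $v_i\wedge v_j$, each multiplied on the left by an element of $\K_Q[V]^e$ of the shape $(\text{scalar})\,v_s\otimes 1 - (\text{scalar})\,1\otimes v_s$, where the scalars are the products of quantum parameters produced when moving $v_s$ past the two remaining generators. Precomposing a $2$-cochain with $d_3$ and using that $\K_Q[V]^e$ acts on $\K_Q[V]\# G$ with the left tensor factor acting by ordinary left multiplication and the right factor acting on $\psi t_h$ by the $h$-twisted right multiplication $\psi t_h \mapsto \psi\, h(v_s)\, t_h$, the value $(\kappa\circ d_3)(v_i\wedge v_j\wedge v_k)$ collects, for each $h$ in $G$, precisely the expression
$$
\bigl(q_{ik}q_{jk}\, h v_k - v_k\bigr)\kappa_h(v_i,v_j) + \bigl(q_{jk} v_j - q_{ij}\, h v_j\bigr)\kappa_h(v_i,v_k) + \bigl(h v_i - q_{ij}q_{ik} v_i\bigr)\kappa_h(v_j,v_k)
$$
appearing in Condition (C), times $t_h$. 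Since $\{t_h : h\in G\}$ is a $\K$-basis of the group algebra, $\kappa\circ d_3 = 0$ if and only if this expression vanishes for every $h$ and every $i<j<k$; that is, $\kappa$ is a cocycle if and only if Condition (C) holds. The explicit form of $d_3$ is the one recorded in~\cite{NW}.

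For the second bullet, recall the $G$-action on the cochain complex: $g$ acts on $\Hom_\K(\bigwedge_Q^\bullet V,\K_Q[V]\# G)$ by $(g\cdot\kappa)(\omega) = t_g\,\kappa(g^{-1}\omega)\,t_{g^{-1}}$, where $g^{-1}$ acts on $\bigwedge_Q^\bullet V$ through the automorphism of the quantum exterior algebra and conjugation $\psi t_h\mapsto g(\psi)\,t_{ghg^{-1}}$ on the coefficients. Expanding $g^{-1}(v_i\wedge v_j) = \sum_{k<\ell}{\det}_{ijk\ell}(g^{-1})\,v_k\wedge v_\ell$ by the computation underlying Lemma~\ref{auts} (adapted to the exterior multiplication $v_i\wedge v_j = -q_{ji}v_j\wedge v_i$ and the quantum $2$-form property of $\kappa$), and using that the scalars $\kappa_h(v_k,v_\ell)$ are fixed by conjugation so that only the group labels are permuted, one finds $(g\cdot\kappa)(v_i\wedge v_j) = \sum_h\bigl(\sum_{k<\ell}{\det}_{ijk\ell}(g^{-1})\,\kappa_{g^{-1}hg}(v_k,v_\ell)\bigr)t_h$. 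Comparing coefficients of $t_h$ with $\kappa(v_i\wedge v_j) = \sum_h \kappa_h(v_i,v_j)t_h$, reindexing the group variable, and then replacing $g$ by $g^{-1}$ (legitimate since the equality must hold for all $g\in G$), the invariance condition $g\cdot\kappa = \kappa$ becomes exactly Condition (D).

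The conceptual content is thus a direct translation, and the main obstacle is purely bookkeeping: one must transport the quantum Koszul resolution of~\cite{NW} with all of its quantum-parameter and sign conventions into the notation of this paper and verify that the coefficients of the faces in $d_3$ — together with the quantum minors ${\det}_{ijk\ell}$ arising from the $G$-action on $\bigwedge_Q^\bullet V$ — line up with Conditions (C) and (D) on the nose (in particular distinguishing $q_{ij}$ from $q_{ji}$ and ${\det}_{ijk\ell}$ from ${\det}_{ij\ell k}$, as in Remark~\ref{distinct}). This is precisely where the hypotheses that $G$ act as automorphisms on both $\bigwedge_Q(V)$ and $\K_Q[V]$ enter: they are exactly what makes the equivariant Koszul machinery of~\cite{NW} applicable and the cochain complex above well defined.
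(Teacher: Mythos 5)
Your proposal is correct and takes essentially the same route as the paper: the paper offers no independent proof of this proposition, attributing it entirely to results of Naidu and Witherspoon~\cite{NW}, and your outline — evaluating a $2$-cochain against the degree-$3$ differential of the quantum Koszul resolution to recover Condition (C), and unwinding the conjugation action on $\Hom_\K(\bigwedge_Q^2 V,\K_Q[V]\#G)$ to recover Condition (D) — is precisely the computation carried out in the cited source. The only caveat is the one you already flag yourself: the argument is complete only once the quantum scalars in $d_3$ and the quantum minors ${\det}_{ijk\ell}$ are verified to match the paper's conventions exactly, which is routine but not optional.
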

Theorem~\ref{formalassociativityconditions2} and
Proposition~\ref{Hochschildrelations} together with
Theorem~3.5 of Naidu and Witherspoon~\cite{NW}
therefore give
another interpretation of the necessary and sufficient
PBW conditions: 
\begin{theorem}
Assume $G$ acts on both the quantum polynomial algebra
$S_Q(V)$ and 
the quantum exterior algebra
$\bigwedge_Q(V)$ as automorphisms.
Let $\kappa$ be a quantum $2$-form.
Then the factor algebra $\cH_{Q,\kappa}$ is a \qdha\ if and only if
$\kappa$ induces
a Hochschild cocycle
for $S_Q(V)\# G$ .
\end{theorem}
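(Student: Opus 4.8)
The plan is to combine the two interpretations already assembled in the excerpt: Theorem~\ref{formalassociativityconditions2} gives purely combinatorial PBW conditions (A)--(D) on the parameters $Q,\kappa$, while Proposition~\ref{Hochschildrelations} translates conditions (C) and (D) into the statements ``$\kappa$ is a Hochschild cocycle'' and ``$\kappa$ is $G$-invariant,'' under the standing hypotheses that $G$ acts as automorphisms on both $\K_Q[V]$ and $\bigwedge_Q(V)$ and that $\kappa$ is a quantum $2$-form. So first I would simply invoke those standing hypotheses to note that condition (B) of Theorem~\ref{formalassociativityconditions2} is exactly the assumption that $\kappa$ is a quantum $2$-form, and that half of condition (A) --- that $G$ acts as automorphisms on $\K_Q[V]$ --- is also among our hypotheses; the remaining half of (A), that $Q$ is a quantum system of parameters, is built into the fact that we fixed $Q$ to be a quantum system of parameters when defining $\K_Q[V]$ (and is forced anyway by Proposition~\ref{skewpolyalg} in the PBW direction).

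Next I would argue the forward direction. Assume $\cH_{Q,\kappa}$ is a \qdha. By Theorem~\ref{formalassociativityconditions2}, conditions (C) and (D) hold. Since we have assumed $G$ acts as automorphisms on both $\K_Q[V]$ and $\bigwedge_Q(V)$ and that $\kappa$ is a quantum $2$-form, Proposition~\ref{Hochschildrelations} applies and tells us that $\kappa$ is a cocycle and $\kappa$ is invariant. By Theorem~3.5 of Naidu and Witherspoon~\cite{NW}, the invariant subspace of $\HHD(\K_Q[V],\K_Q[V]\#G)$ is identified with $\HHD(\K_Q[V]\#G)$, so an invariant cocycle $\kappa$ descends to a genuine Hochschild cocycle for $\K_Q[V]\#G$. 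Hence $\kappa$ induces a Hochschild cocycle for $\K_Q[V]\#G$.

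For the converse, suppose $\kappa$ induces a Hochschild cocycle for $\K_Q[V]\#G$. Unwinding the isomorphism $\HHD(\K_Q[V],\K_Q[V]\#G)^G\cong\HHD(\K_Q[V]\#G)$ of~\cite[Theorem~3.5]{NW}, this means that, viewed as a $2$-cochain in $\HHD(\K_Q[V],\K_Q[V]\#G)$, the quantum $2$-form $\kappa$ is both a cocycle and $G$-invariant. Proposition~\ref{Hochschildrelations} then gives conditions (C) and (D) of Theorem~\ref{formalassociativityconditions2}; condition (B) is the quantum $2$-form hypothesis and condition (A) holds by the standing assumptions on $Q$ and on the $G$-action. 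So all of (A)--(D) hold, and Theorem~\ref{formalassociativityconditions2} yields that $\cH_{Q,\kappa}$ is a \qdha.

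The only real subtlety --- and the step I would be most careful about --- is making sure that the notion ``$\kappa$ induces a Hochschild cocycle for $\K_Q[V]\#G$'' is faithfully translated through the isomorphism of~\cite[Theorem~3.5]{NW} into the pair of statements ``cocycle'' plus ``invariant'' appearing in Proposition~\ref{Hochschildrelations}: one must check that a cochain in $\HHD(\K_Q[V],\K_Q[V]\#G)$ represents a class in the $G$-invariant subspace (hence in $\HHD(\K_Q[V]\#G)$) precisely when it is both closed and invariant as a cochain, i.e.\ that the identification respects the cochain-level structure used in Proposition~\ref{Hochschildrelations}. Since Naidu and Witherspoon's resolution is the quantum Koszul resolution on $\K_Q[V]$ with the compatible $G$-action, this is exactly the content of their setup, so no genuinely new computation is required --- the proof is a two-line assembly of Theorem~\ref{formalassociativityconditions2}, Proposition~\ref{Hochschildrelations}, and~\cite[Theorem~3.5]{NW}.
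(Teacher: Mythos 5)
Your proposal is correct and follows essentially the same route as the paper, which presents this theorem as an immediate consequence of Theorem~\ref{formalassociativityconditions2}, Proposition~\ref{Hochschildrelations}, and~\cite[Theorem~3.5]{NW}. Your write-up simply makes explicit the bookkeeping of conditions (A)--(D) and the translation of ``induces a Hochschild cocycle for $\K_Q[V]\# G$'' into ``cocycle and $G$-invariant'' that the paper leaves implicit.
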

Naidu and Witherspoon~\cite[Theorem~4.4]{NW} in fact 
show that every ``constant''
Hochschild 2-cocycle gives rise to a \qdha\
(extending a theorem from the nonquantum setting;
see~\cite{SW1}).

\section{Automorphisms of Coordinate Rings of Quantum Planes}
\label{QuantumPlanesSection}

In this section, we consider automorphisms of quantum polynomial
algebras and \qdha s over any 2-dimensional vector space $V$.
Recall that every \qdha\ $\cH_{\kappa, Q}$ arises from a group acting 
as automorphisms on some quantum polynomial algebra $S_Q(V)$
(by Proposition~\ref{skewpolyalg}).
Every graded $\K$-automorphism of 
a quantum polynomial algebra $S_Q(V)$
restricts to a linear map on $V$
and thus defines an element of $\GL_{n}(\K)$.
Conversely, a transformation in $\GL_{n}(\K)$
extends to a graded $\K$-automorphism
of $S_Q(V)$ when it satisfies the condition of Lemma~\ref{auts}.

We write $q$ for the parameter $q_{12}$.
Recall that the monomial matrices in $\GL_2(\K)$ 
are simply those which are either diagonal or anti-diagonal.
For $n=2$, it is not difficult
to determine the group $\Aut_{\K} S_Q(V)$ of
graded $\K$-automorphisms of $S_Q(V)$ explicitly
(see, e.g., Alev-Chamarie~\cite{AlevChamarie}):

\begin{proposition} 
If $n=2$, then $\Aut_{\K} S_Q(V)$ is
\begin{itemize}
\item $\GL_2(\K)$ when $q=1$, 
\item $(\K^*)^2$ (the torus) when $q\neq \pm 1$, and
\item the subgroup of monomial matrices of $\GL_2(\K)$ when $q=-1$.
\end{itemize}
\end{proposition}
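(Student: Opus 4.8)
The plan is to apply Lemma~\ref{auts} with $n=2$ and unwind what the single quantum-minor condition says about a matrix $h=\begin{pmatrix} a & b \\ c & d \end{pmatrix}$ acting on $V$, writing $q=q_{12}$ so that $q_{21}=q^{-1}$ and $q_{11}=q_{22}=1$. First I would observe that for $n=2$ the only genuinely new constraint coming from Lemma~\ref{auts} is the one with $\{k,\ell\}=\{1,2\}$, say $(k,\ell)=(1,2)$ and $(k,\ell)=(2,1)$, which both reduce to the single equation ${\det}_{1212}(h) = -q_{21}\,{\det}_{1221}(h)$; the cases with $k=\ell$ or with $\{i,j\}$ not equal to $\{1,2\}$ either hold trivially or are equivalent to this one. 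Then I would compute ${\det}_{1212}(h) = h^1_1 h^2_2 - q\, h^1_2 h^2_1$ and ${\det}_{1221}(h) = h^1_2 h^2_1 - q\, h^1_1 h^2_2$ and substitute, obtaining a single polynomial identity in the four entries $a=h^1_1,\ b=h^1_2,\ c=h^2_1,\ d=h^2_2$ of the form $(1-q^2)(ad - q^{-1}\cdot\text{something})=0$ — the precise bookkeeping will collapse to $(q^2-1)\,bc = 0$ or an equivalent statement. (Here I am using Lemma~\ref{auts} as a black box, so only the $n=2$ specialization of its condition needs to be made explicit.)

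With that identity in hand the three cases follow by elementary casework on $q$. When $q=1$ the identity is vacuous, so every $h\in\GL_2(\K)$ acts as an automorphism and $\Aut_\K \K_Q[V]=\GL(2,\K)$; this also recovers the familiar fact that $\K_Q[V]$ is then an ordinary polynomial ring in two variables. When $q=-1$ we get $(q^2-1)bc=0$ automatically since $q^2=1$, so again the condition is vacuous — but this is the wrong conclusion, so I expect the correct reduction to be that the condition becomes $bc\cdot(\text{unit})=\,ad\cdot(\text{unit})$ forcing either $b=c=0$ or $a=d=0$ precisely when $q=-1$; I would double-check the minor computation so that the $q=-1$ case yields exactly ``$h$ is diagonal or anti-diagonal,'' i.e. $h$ is monomial. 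When $q\neq\pm1$ the factor $(q^2-1)$ is a unit, so the identity forces $bc=0$ together with (from the same computation) $ad$-type constraints that rule out the anti-diagonal case, leaving only diagonal matrices $\diag(a,d)$ with $a,d\in\K^*$, i.e. $(\K^*)^2$. Finally I would note that conversely every such matrix manifestly satisfies Lemma~\ref{auts}'s condition, so the descriptions are exact, and that every graded automorphism of $\K_Q[V]$ restricts to a linear map on the degree-one part $V$, so $\Aut_\K\K_Q[V]$ really is the subgroup of $\GL_2(\K)$ cut out by this condition (as already noted in the paragraph preceding the proposition).

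The only real subtlety — and the step I would be most careful with — is getting the quantum-minor algebra exactly right so that the $q=-1$ case separates cleanly from the $q\neq\pm1$ case: naively, $q^2-1$ vanishes in both the $q=1$ and $q=-1$ cases, so a single factored identity of the form $(q^2-1)bc=0$ cannot by itself distinguish ``all of $\GL_2$'' from ``monomial matrices.'' The resolution is that the full condition of Lemma~\ref{auts} packages \emph{two} coefficient equations (the coefficient of $v_1v_2$ and, separately, the coefficients of $v_1^2$ and $v_2^2$), and it is the $v_k^2$-coefficients ${\det}_{ijkk}(h)=0$ — equivalently, the statement in Corollary~\ref{monomialgroup} read with $q_{ij}=q$ — that supply the constraint $(1-q)\,h^i_k h^j_k=0$ pinning down monomiality when $q=-1\neq1$. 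So in the write-up I would be explicit that the $n=2$ condition splits as: vanishing of the $v_1v_2$-coefficient (always an identity when $q=\pm1$, genuine when $q\neq\pm1$) and vanishing of the $v_1^2$- and $v_2^2$-coefficients $(1-q)\,h^1_k h^2_k=0$ for $k=1,2$, which for $q\neq1$ forces each column to have a zero, i.e. $h$ monomial. Combining: $q=1$ gives no constraint ($\GL_2$); $q=-1$ gives only the monomial constraint (monomial subgroup); $q\neq\pm1$ gives monomial plus the extra $v_1v_2$-relation, which kills the anti-diagonal matrices and leaves the diagonal torus $(\K^*)^2$. Citing Alev--Chamarie~\cite{AlevChamarie} for corroboration rounds it off.
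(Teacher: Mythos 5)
The paper offers no proof of this proposition (it defers to Alev--Chamarie), so there is nothing to match against; judged on its own, your final paragraph contains a correct and complete argument, and it is exactly the computation the paper's setup intends. Specializing Lemma~\ref{auts} (equivalently, the coefficient equations in its proof) to $n=2$ with $(i,j)=(1,2)$ gives precisely three constraints: the $v_1^2$- and $v_2^2$-coefficients $(1-q)\,h^1_1h^2_1=0$ and $(1-q)\,h^1_2h^2_2=0$, and the $v_1v_2$-coefficient $\det_{12\,2\,1}(h)+q\det_{12\,1\,2}(h)=(1-q^2)\,h^1_2h^2_1=0$; the pairs $(i,j)=(2,1)$ and $i=j$ give nothing new. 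Your casework on these three equations is right: $q=1$ kills all constraints ($\GL_2$); $q=-1$ leaves only the two monomiality constraints (diagonal or anti-diagonal, by invertibility); $q\neq\pm1$ adds $h^1_2h^2_1=0$, which eliminates the anti-diagonal case and leaves the torus, and the converse inclusions are immediate. The one thing to fix is the write-up: your first two paragraphs assert that the condition ``collapses to a single identity $(q^2-1)bc=0$,'' which is false and which you yourself then recognize cannot distinguish $q=1$ from $q=-1$; the resolution you give in the last paragraph (the condition is \emph{three} equations, with the $v_k^2$-coefficients supplying the monomiality constraint as in Corollary~\ref{monomialgroup}) is the actual proof. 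Discard the exploratory material and present only that corrected version.
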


We describe the set of \qdha\ in each of the above three cases
by applying Theorem~\ref{formalassociativityconditions2}.

\vspace{2ex}

\begin{remark}\label{quantumspeciallinear}
Condition (iv) of Theorem~\ref{formalassociativityconditions2}
for $n=2$ implies that for any commuting
$g$ and $h$ in $G$,
$\kappa_g(v_1, v_2)={\det}_Q(h)\, \kappa_g(v_1, v_2)\, ,$
where $\det_Q$ is the {\em quantum determinant} 
defined by
$${\det}_Q
\begin{pmatrix}
a & b\\
c& d
\end{pmatrix}
:=ad-q\, bc\ .
$$ 

\noindent 
Thus, for any\ \qdha\ $\cH_{\kappa, Q}$ and 
for any $g$ in $G$, 
the parameter $\kappa_g$ is identically zero 
unless 
the centralizer subgroup $Z_G(g)$ of $g$ in $G$
lies in the set of quantum-determinant-one matrices,
$$\{M \in \GL_2(\K):{\det}_Q(M)=1\} \, . $$
In particular, every \qdha\ is supported on group elements of quantum 
determinant one. 
\end{remark}

\vspace{2ex}

\subsection{Coordinate Ring of Nonquantum Plane} {($n=2,q=1$)}
${}_{}$

\vspace{1ex}

\noindent
When $q=1$,
the set of \qdha s
comprises all quotients of the form
$$
\K\langle x,y\rangle\# G/
\Big\langle\, xy-yx-\sum_{\substack{g\in G\\ \det(g)=1}} c_g t_g\,\Big\rangle
$$
where the scalars $c_g$ in $\K$ are arbitrary for
$g$ in a set
of determinant-one conjugacy class representatives of $G\leq\GL_2(\K)$
and $c_{h^{-1}gh}=\det(h)\, c_g$ for all $h$ in $G$.
Note that the coefficient $c_g$ is zero 
or the centralizer $Z_G(g)$ is a subgroup of $\SL_2(\K)$.
(In particular, the coefficient of the identity group element
is zero unless $G\leq\SL_2(\K)$.)
These nonquantum algebras are called
{\em graded Hecke algebras} (see~\cite{EtingofGinzburg}
and~\cite{RamShepler}, for example).
(In fact, Remark~\ref{quantumspeciallinear} 
is an quantum analogue
of an aspect of the characteristic zero theory of graded Hecke algebras.)

\vspace{2ex}

\subsection{Coordinate Ring of Transcendental Quantum Plane}
${}_{}$

\noindent
{($n=2,q \neq \pm 1$)}

\vspace{1ex}

\noindent
Quantum Drinfeld Hecke algebras in 2 dimensions for $q\neq \pm 1$ 
(including the case of $q$ transcendental over a subfield)
all arise
from an abelian group $G$ acting diagonally and are described
in Section~\ref{AbelianSection}.  
If each element of $G$ has quantum determinant 1
($\det_Q(g)=1$ for all $g$ in $G$), 
then the set of \qdha s comprises all quotients
of the form
$$
\K\langle x,y\rangle\# G/
\big\langle\, xy-q\, yx-\sum_{g\in G} c_g t_g\,\big\rangle
$$
where the scalars $c_g$ in $\K$ are arbitrary.
If some element of $G$ has non-unity quantum determinant,
then $\kappa$ is identically zero
(by Remark~\ref{quantumspeciallinear}), and $\cH_{\kappa,Q}$
is just the quantum polynomial algebra $S_Q(V)=S_q(V)$ on two variables.

\vspace{2ex}

\subsection{Coordinate Ring of Skew Quantum Plane} {($n=2,q=-1$)}
${}_{}$

\vspace{1ex}

\noindent
The set of \qdha s in 2 dimensions when $q=-1$ 
comprises all quotients of the form
$$
\K\langle x,y\rangle\# G/
\big\langle\, xy+ yx-\sum_{g\in G} c_g t_g\,\big\rangle
$$
where the scalars $c_g$ in $\K$ are arbitrary for
$g$ in a set
of conjugacy class representatives of a monomial group $G\leq \GL_2(\K)$
and $c_{h^{-1}gh}=\det_Q(h) \, c_g$ for all $h$ in $G$. In particular,
$c_g=0$ if some element $h$ of the centralizer
$Z_G(g)$ has non-unity quantum determinant ($\det_Q(h)\neq 1$).

\section{Automorphisms of the Coordinate Ring of Quantum $3$-space}
\label{QuantumSpaceSection}

Various authors examine
automorphisms and graded automorphisms of quantum polynomial algebras
and their generalizations (for example,
see Kirkman, Kuzmanovich, and Zhang~\cite{KirkmanKuzmanovichZhang},
Alev and Chamarie~\cite{AlevChamarie},
and Artamonov and Wisbauer~\cite{ArtamonovWisbauer}).
The group $(\K^*)^n$ of diagonal matrices is always a subgroup
of the group of graded automorphisms, $\Aut_{\K} S_Q(V)$, of $S_Q(V)$.
When the parameters $q_{ij}$ are independent over $\K^*$,
$\Aut_{\K} S_Q(V)$ contains no other automorphisms.
For arbitrary parameters, the situation is more complicated to describe.
In this section,  
we give $\Aut_{\K}S_Q(V)$ for $n=3$ explicitly.
A careful analysis of Lemma~\ref{auts}
for $n=3$ (with help from the computer
algebra system \textsc{Singular}~\cite{Singular}) 
leads to the following theorem, whose
proof we omit for the sake of brevity. 
\begin{theorem}\label{autosin3dims}
Let $k$ be a field and $\K = k(q_{12}, q_{13}, q_{23})$ an extension.
Consider the coordinate ring of the quantum affine 3-space
\[
S_Q(V) = 
\
\K\langle v_1,v_2,v_3 \mid v_2 v_1 = q_{12} v_1 v_2, \
v_1 v_3 = q_{31} v_3 v_1, \
v_3 v_2 = q_{23} v_2 v_3 \rangle.
\]
Then $\Aut_{\K} S_Q(V)$
is exactly one of the following groups: 
\begin{itemize}
\item[(i)] 
\rule[0ex]{0ex}{4ex}
If all $q_{ij}=1$, then $\Aut_{\K} S_Q(V) = \GL_3(k)$. 
(Here, $\K=k$ and $\trdeg_k \K = 0$.) 
\item[(ii)]
\rule[0ex]{0ex}{4ex}
If all $q_{ij}=-1$, then
$\Aut_{\K} S_Q(V)$ 
is the subgroup 
 of monomial matrices in $\GL_3(k)$.
(See Corollary~\ref{monomialgroup}.) Also, $\K=k$ and $\trdeg_k \K = 0$. 
\item[(iii)]
\rule[0ex]{0ex}{4ex}
$\Aut_{\K} S_Q(V)=(\K^*)^3$ and $\trdeg_k \K \leq 3$ unless
\begin{itemize}
\item[$\bullet$]
$q_{12}=q_{23}=q_{31}$, or 
\item[$\bullet$]
$\{q_{12}, q_{23}, q_{31}\}=\{\pm 1, c, c^{-1}\}$ 
for some $c$ in $\K^*$.
\end{itemize}
\item[(iv)] 
\rule[0ex]{0ex}{4ex}
If $q_{12}=q_{23}=q_{31} \neq \pm 1$,
then 
$\trdeg_k \K \leq 1$ and
$\Aut_{\K}S_Q(V)$ is generated by 
\[ 
\Bigg\{
 \left[
 \begin{array}{*{3}{c}}
 0 & a_{12} & 0 \\
 0 & 0 & a_{23} \\
 a_{31} & 0 & 0
 \end{array}
 \right], 
 \left[
 \begin{array}{*{3}{c}}
 0 & 0 & a_{13} \\
 a_{21} & 0 & 0 \\
 0 & a_{32} & 0
 \end{array}
 \right]
\Bigg\}
\subset  \GL_n(\K)\ .
\]
\item[(v)]
\rule[0ex]{0ex}{4ex}
 If $\{q_{12}, q_{23}, q_{31} \}= \{1, c, c^{-1}\}$ for some $c\neq 1 $, then
$\trdeg_k \K \leq 1$
\footnote{We give upper bounds for $\trdeg$, allowing further evaluation of quantum parameters $q_{ij}$ in addition to the given conditions on them.} 
and three cases arise:
  \begin{itemize}
  \item[(a)] If $q_{23}=1$ and $q_{12}=q_{31}^{-1}\neq 1$, then
$$
\hphantom{xxxxxxxxxxxxxxx}
\Aut_{\K}S_Q(V) =
\Bigg\{
\left[
\begin{array}{*{3}{c}}
a_{11} & 0 & 0 \\
0 & a_{22} & a_{23} \\
0 & a_{32} & a_{33}
\end{array}
\right] 
\Bigg\}
\leq \GL_n(\K)
.$$
\item[(b)] If
$q_{31}=1$ and $q_{12}=q_{23}^{-1}\neq 1$, then
$$
\hphantom{xxxxxxxxxxxxxxx}
\Aut_{\K}S_Q(V)=
\Bigg\{
\left[
\begin{array}{*{3}{c}}
a_{11} & 0 & a_{13} \\
0 & a_{22} & 0 \\
a_{31} & 0 & a_{33}
\end{array}
\right]  
\Bigg\}
\leq \GL_n(\K)
.$$
\item[(c)] If $q_{12}=1$ and $q_{23}=q_{31}^{-1}\neq 1$, then
$$
\hphantom{xxxxxxxxxxxxxxx}
\Aut_{\K}S_Q(V)=
\Bigg\{
\left[
\begin{array}{*{3}{c}}
a_{11} & a_{12} & 0 \\
a_{21} & a_{22} & 0 \\
0 & 0 & a_{33}
\end{array}
\right] 
\Bigg\}
\leq \GL_n(\K)
.$$
\end{itemize}
\item[(vi)]
\rule[0ex]{0ex}{4ex}
If 
$\{q_{12}, q_{23}, q_{31}\}=\{-1, c, c^{-1}\}$ 
for some $c\neq -1$ in $\K^*$,
then $\trdeg_k \K \leq 1$
and $\Aut_{\K}S_Q(V)$ is generated by 
$(\K^*)^3$ together with
$$
\Bigg\{
\left[
\begin{array}{*{3}{c}}
a_{11} & 0 & 0 \\
0 & 0 & a_{23} \\
0 & a_{32} & 0
\end{array}
\right] 
\Bigg\}
\subset \GL_n(\K)
\text{ if } q_{23}=-1, q_{12}=q_{31}^{-1}\ ,
$$
$$
\Bigg\{
\left[
\begin{array}{*{3}{c}}
0 & 0 & a_{13} \\
0 & a_{22} & 0 \\
a_{31} & 0 & 0
\end{array}
\right] 
\Bigg\}
\subset \GL_n(\K)
\text{ if }q_{31}=-1, q_{12} = q_{23}^{-1}\, ,
$$
or
$$
\Bigg\{
\left[
\begin{array}{*{3}{c}}
0 & a_{12} & 0 \\
a_{21} & 0 & 0 \\
0 & 0 & a_{33}
\end{array}
\right] 
\Bigg\}
\subset \GL_n(\K)
\text{ if } 
q_{12}=-1, q_{31}^{-1}=q_{23}\ .
$$
\end{itemize}
\end{theorem}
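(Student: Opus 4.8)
The plan is to carry out the ``careful analysis of Lemma~\ref{auts}'' directly as a finite case analysis on the multiplicative relations among $q_{12},q_{13},q_{23}$. By the discussion in Section~\ref{QuantumPlanesSection}, a graded $\K$-automorphism of $\K_Q[V]$ is exactly a matrix $h=(h^i_j)$ in $\GL_3(\K)$ satisfying the conditions of Lemma~\ref{auts}; extending $Q$ to the quantum system of parameters with $q_{ji}:=q_{ij}^{-1}$ and $q_{ii}:=1$, collecting the instances with $k=\ell$ separately, discarding the (vacuous) instances with $i=j$, and using the symmetries $i\leftrightarrow j$ and $k\leftrightarrow\ell$, these conditions become the system
\[
(1-q_{ij})\,h^i_k h^j_k = 0 \qquad (1\le i<j\le 3,\ 1\le k\le 3)
\]
together with
\[
(1-q_{\ell k}q_{ij})\,h^i_k h^j_\ell + (q_{\ell k}-q_{ij})\,h^i_\ell h^j_k = 0 \qquad (1\le i<j\le 3,\ 1\le k<\ell\le 3),
\]
the first family being the ``diagonal'' content ${\det}_{ijkk}(h)=0$ extracted in the proof of Lemma~\ref{auts}. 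This is a finite system of quadratic equations in the nine entries of $h$ whose coefficients are Laurent monomials in $q_{12},q_{13},q_{23}$; the theorem asserts that its solution set is precisely the listed subgroup of $\GL_3(\K)$ once one specifies which of the degeneracy relations among $q_{12},q_{13},q_{23}$ holds.

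Three preliminary observations do most of the work. \emph{(a)} Diagonal matrices always solve the system, so $(\K^*)^3\le\Aut_\K\K_Q[V]$ in every case. \emph{(b)} If $q_{ij}\neq 1$ for some pair $i<j$, the first family forces $h^i_k h^j_k=0$ for all $k$; in particular, if every $q_{ij}\neq 1$ then $h$ is a monomial matrix---this is Corollary~\ref{monomialgroup} applied to $\K_Q[V]$ itself (so cases II and VI, which mention the value $-1$, live in characteristic $\neq 2$, since otherwise $-1=1$ and we are back to the commutative polynomial ring). \emph{(c)} A monomial matrix with arbitrary nonzero scalars and underlying coordinate permutation $\sigma\in S_3$ solves the whole system if and only if $\sigma$ preserves the quantum parameters, i.e.\ $q_{\sigma(i)\sigma(j)}=q_{ij}$ for all $i,j$: for such $h$ the first family is vacuous and each equation of the second family collapses to one such identity. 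Thus the monomial automorphisms are read off purely from the symmetries of the parameter array.

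With \emph{(a)}--\emph{(c)} in hand I would dispatch the regimes as follows. If all $q_{ij}=1$ the system is empty and $\K_Q[V]$ is the commutative polynomial ring, so $\Aut_\K\K_Q[V]=\GL_3(k)$ (case I); if all $q_{ij}=-1$ (necessarily $\Char\K\neq 2$), then $h$ is monomial by \emph{(b)} and every $\sigma\in S_3$ qualifies by \emph{(c)}, giving all monomial matrices (case II). If the $q_{ij}$ are not all equal and no coincidence of the forms in cases IV--VI holds, an inspection of the second family (see below) forces $h$ diagonal, so $\Aut_\K\K_Q[V]=(\K^*)^3$ (case III). In the remaining cases one imposes the pertinent relation and determines from the surviving nonzero coefficients of the second family exactly which off-diagonal entries of $h$ are permitted: the cyclic coincidence of case IV leaves only the torus and the two $3$-cycle monomial matrices, which generate the displayed group; a coincidence ``one parameter equals $1$'' (case V) makes the corresponding pair of coordinates span a subalgebra isomorphic to the polynomial ring on two variables, on which all of $\GL_2$ acts, producing the three block shapes according to which parameter is trivial; and a coincidence ``one parameter equals $-1$, the other two inverse'' (case VI) leaves, besides the torus, only the anti-diagonal interchange of that pair of coordinates. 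In each of cases III--VI the transcendence-degree bound is immediate from the number of independent multiplicative relations the case imposes on $q_{12},q_{13},q_{23}$.

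The hard part is the completeness claim in cases III--VI: that nothing outside the listed matrices solves the system. Exhibiting the listed matrices as automorphisms is a direct substitution into Lemma~\ref{auts}. For the converse one enumerates the possible \emph{support patterns} of $h$ (which of the nine positions may be nonzero, subject to invertibility) and shows, for each, that after imposing the case relations the second family either is automatically satisfied---so the pattern is one of the listed shapes---or forces a further entry to vanish. The delicacy is that the vanishing loci of the coefficients $1-q_{\ell k}q_{ij}$ and $q_{\ell k}-q_{ij}$ are precisely the codimension-one conditions defining cases IV--VI, and once two of $q_{12},q_{13},q_{23}$ coincide several of these loci are met at once, so the bookkeeping of which coefficients survive is intricate though finite. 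Concretely, I would (as \textsc{Singular} effectively does) adjoin the case relations to the ideal generated by the quadratic system and decompose it into irreducible components, or, by hand, fix a case, list the nonvanishing coefficients of the second family, and propagate the forced zeros through the equations.
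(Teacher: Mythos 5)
The paper omits the proof of this theorem entirely, noting only that it follows from a careful analysis of Lemma~\ref{auts} for $n=3$ with help from \textsc{Singular}; your proposal carries out exactly that analysis, and your reduction of Lemma~\ref{auts} to the two quadratic families in the entries of $h$, together with observations (a)--(c), is correct. Your sketch defers the finite support-pattern bookkeeping for the completeness claims in cases III--VI just as the paper does, so the two are at essentially the same level of detail and follow the same route.
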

In the next section, we will give an example using this theorem.
\section{Example}\label{ExtensiveExampleSection}

In this section, we show how to use our results to work
out the complete set of \qdha s arising from a fixed
group.  We assume the characteristic of $\K$ is not two in this example.

Consider the subgroup $G$ of $\GL_3(\K)$
generated by the two matrices
\[
M = 
\begin{footnotesize}
\left[
\begin{array}{crc}
0 & 0 & 1 \\
0 & -1 & 0 \\
1 & 0 & 0
\end{array}
\right]
\end{footnotesize}
\text{ and }\
N =
\begin{footnotesize}
\left[
\begin{array}{crr}
1 & 0 & 0 \\
0 & -1 & 0 \\
0 & 0 & -1
\end{array}
\right],
\end{footnotesize}
\]
and note that $G$ is isomorphic to the dihedral group $D_8$ of order $8$.
Set $g_1=e,g_2=M,g_3=N$, 
$$
\begin{footnotesize}
g_4 = MN = 
\left[
\begin{array}{ccr}
0 & 0 & -1 \\
0 & 1 & 0 \\
1 & 0 & 0
\end{array}
\right], \quad
\end{footnotesize}
\begin{footnotesize}
g_5 = NM = 
\left[
\begin{array}{rcc}
0 & 0 & 1 \\
0 & 1 & 0 \\
-1 & 0 & 0
\end{array}
\right], 
\end{footnotesize}
$$
$$
\begin{footnotesize}
g_6 = MNM = 
\left[
\begin{array}{rrc}
-1 & 0 & 0 \\
0 & -1 & 0 \\
0 & 0 & 1
\end{array}
\right]
\end{footnotesize},\quad
\begin{footnotesize}
g_7 = NMN = 
\left[
\begin{array}{*{3}{c}}
0 & 0 & -1 \\
0 & -1 & 0 \\
-1 & 0 & 0
\end{array}
\right],
\end{footnotesize}
\quad \text{and}
$$
$$
\begin{footnotesize}
g_8 = MNMN =
\left[
\begin{array}{rcr}
-1 & 0 & 0 \\
0 & 1 & 0 \\
0 & 0 & -1
\end{array}
\right].
\end{footnotesize}
$$
We use Theorems~\ref{formalassociativityconditions2}
and~\ref{autosin3dims}
and the computer
algebra system \textsc{Singular}~\cite{Singular}
to determine parameters $q_{ij}$ and $\kappa(v_i,v_j)$ such that
$\cH_{Q,\kappa}$ is a \qdha.
Condition (i) is satisfied when $q_{12}q_{23}=1$ and $q_{13}= \pm 1$.
Conditions (ii), (iii), and (iv) provide us with a linear system in terms of 
the $\kappa_g(v_i,v_j)$. We abbreviate notation and write 
$\kappa_k(i,j)$ for $\kappa_{g_k}(v_i,v_j)$.
Computing minimal associated prime ideals from a primary decomposition in the affine space of parameters, we arrive at
all possibilities yielding a factor algebra
$\cH:=\cH_{Q,\kappa}$ which satisfies the PBW property.
The following relations $\mathcal{R}$ define
all quantum Drinfeld Hecke algebras 
$\cH \cong \K\langle v_1,v_2,v_3 \rangle\# G/ 
\langle \mathcal{R} \rangle $. 
\hspace{-10ex}
\begin{enumerate}
\item[]
\hspace{-8ex}
{\bf (I)} \ For $q_{13}=1, q_{12}q_{23}=1$: 
\begin{enumerate}
\item[] 
\rule{0ex}{3ex}
\hspace{-8ex}
(a)\ \ 
If $q_{12}\neq q_{23}$, then the relations are
\[
v_2 v_1 = q_{12} v_1 v_2, \  v_3 v_2 = q_{12}^{-1} v_2 v_3, \ v_3 v_1 = v_1 v_3\, .
\]
\item[]
\rule[0ex]{0ex}{0ex}
\hspace{-7ex}
(b)\ \ 
If $q_{12} = q_{23}$,
then the parameter $\kappa_4(1,3)$ can be chosen freely 
in $\K$ and the relations are
\[
\phantom{12345678910}
v_2 v_1 = q_{12} v_1 v_2, \  v_3 v_2 = q_{12} v_2 v_3, \ v_3 v_1 = v_1 v_3 +
\kappa_4(1,3) (t_{g_4} - t_{g_5})\, .
\]
\end{enumerate}
\item[]
\hspace{-8ex}
{\bf (II)} \ For $q_{13}=-1, q_{12}q_{23}=1$: 
\begin{enumerate}
\item[] 
\hspace{-7ex}
(c) \ 
\rule[0ex]{0ex}{3ex}
If $q_{12}^2=-1$ (giving a primitive fourth-root-of-unity),
then $\kappa_2(1,3)$ can be chosen freely in $\K$ and 
\[\hphantom{xxxxxxxxx}
v_2 v_1 = q_{12} v_1 v_2, \ v_3 v_2 = - q_{12} v_2 v_3, \ v_3 v_1 = - v_1 v_3 + \kappa_2(1,3) (t_{g_2} - t_{g_7})\, .
\]
\item[] 
\hspace{-7ex}
(d) \
\rule[0ex]{0ex}{0ex}
Otherwise, the relations are
$$
\hphantom{xxxxxxxx}
v_2 v_1 = q_{12} v_1 v_2, \ v_3 v_2 = q_{12}^{-1} v_2 v_3,  \ v_3 v_1 =- v_1 v_3\, .
$$
\end{enumerate}
\end{enumerate}

Note that in the nonquantum setting, when $q_{13}=q_{12}=q_{23}=1$,
we recover a one-parameter family of classical Hecke Drinfeld algebras
from Case~(I)(b).  In the quantum setting, 
we obtain several other one-parameter families of algebras.

\bibliographystyle{amsplain}
\bibliography{qjac}
\end{document}